\numberwithin{equation}{section}
\theoremstyle{plain}
\newtheorem{theorem}{Theorem}[section]
\newtheorem{corollary}[theorem]{Corollary}
\newtheorem{proposition}[theorem]{Proposition}
\newtheorem{lemma}[theorem]{Lemma}
\theoremstyle{remark}
\newtheorem{remark}[theorem]{Remark}
\theoremstyle{definition}
\newtheorem{definition}[theorem]{Definition}
\newtheorem{notation}[theorem]{Notation}
\newtheorem*{notation*}{Notation}
\renewcommand{\AA}{\mathcal{A}}
\newcommand{\PP}{\mathcal{P}}
\newcommand{\MM}{\mathcal{M}}
\newcommand{\DD}{\mathcal{D}}
\newcommand{\R}{\mathbb{R}}
\newcommand{\Q}{\mathbb{Q}}
\newcommand{\N}{\mathbb{N}}
\newcommand{\cB}{\mathcal{B}}
\newcommand{\cM}{\mathcal{M}}
\newcommand{\cD}{\mathcal{D}}
\newcommand{\e}{\epsilon}
\newcommand{\n}{\mathbf{n}}
\newcommand{\cF}{\mathcal{F}}
\newcommand{\cG}{\mathcal{G}}
\newcommand{\cR}{\mathcal{R}}
\newcommand{\cO}{\mathcal{O}}
\newcommand{\cU}{\mathcal{U}}
\newcommand{\cP}{\mathcal{P}}
\newcommand{\cS}{\mathcal{S}}
\newcommand{\cA}{\mathcal{A}}
\newcommand{\MD}{\mathcal{MD}}
\newcommand{\TD}{\mathcal{TD}}
\newcommand{\FD}{\mathcal{FD}}
\newcommand{\EFD}{\mathcal{EFD}}
\newcommand{\CPD}{\mathcal{CPD}}
\newcommand{\ECPD}{\mathcal{ECPD}}
\newcommand{\eps}{\varepsilon}
\newcommand{\Z}{\mathbb{Z}}
\renewcommand{\epsilon}{\varepsilon}
\renewcommand{\rho}{\varrho}
\renewcommand{\phi}{\varphi}
\newcommand{\ol}{\overline}
\renewcommand{\iint}{\int\hspace{-0.1in}\int}
  \newcommand{\la}{\langle}
  \newcommand{\ra}{\rangle}
\providecommand{\spl}{\mathop{\rm SPL}\nolimits}
\providecommand{\dist}{\mathop{\rm dist}\nolimits}
\providecommand{\spt}{\mathop{\rm spt}\nolimits}
\providecommand{\supp}{\mathop{\rm supp}\nolimits}
\providecommand{\cent}{\mathop{\rm cent}\nolimits}
\providecommand{\Lip}{\mathop{\rm Lip}\nolimits}
\providecommand{\Tan}{\mathop{\rm Tan}\nolimits}
\begin{document}

\title[Distributions generated by the scenery flow]{Structure of distributions generated by the scenery flow}

 \author{Antti K\"aenm\"aki}
 \address{Department of Mathematics and Statistics \\
          P.O.\ Box 35 (MaD) \\
          FI-40014 University of Jyv\"askyl\"a \\
          Finland}
 \email{antti.kaenmaki@jyu.fi}

 \author{Tuomas Sahlsten}
 \address{Einstein Institute of Mathematics\\
	The Hebrew University of Jerusalem\\
         Givat Ram, Jerusalem 91904 \\
         Israel}
\email{tuomas@sahlsten.org}

 \author{Pablo Shmerkin}
 \address{Department of Mathematics and Statistics\\
 	Torcuato Di Tella University\\
	Av. Figueroa Alcorta 7350, Buenos Aires\\ 
	Argentina}
\email{pshmerkin@utdt.edu}

 \thanks{T.S. acknowledges the support from University of Bristol, the Finnish Centre of Excellence in Analysis and Dynamics Research, Emil Aaltonen Foundation and European Union (ERC grant $\sharp$306494). P.S. was partially supported by a Leverhulme Early Career Fellowship and by Project PICT 2011-0436 (ANPCyT)}
 \subjclass[2010]{Primary 37A10, 28A80; Secondary 28A33, 28A75}
 \keywords{scenery flow, fractal distributions, Poulsen simplex, uniformly scaling measures, Baire category}

\maketitle

\begin{abstract}
We expand the ergodic theory developed by Furstenberg and Hochman on dynamical systems that are obtained from magnifications of measures. We prove that any fractal distribution in the sense of Hochman is generated by a uniformly scaling measure, which provides a converse to a regularity theorem on the structure of distributions generated by the scenery flow. We further show that the collection of fractal distributions is closed under the weak topology and, moreover, is a Poulsen simplex, that is, extremal points are dense. We apply these to show that a Baire generic measure is as far as possible from being uniformly scaling: at almost all points, it has all fractal distributions as tangent distributions.
\end{abstract}


\section{Introduction}

\subsection{Historical background}

A central theme in analysis over the years has been the study of ``tangents'' of possibly complicated objects, in order to take advantage of the regularity arising in the limiting structures through the metamorphosis of magnification. For example, a differentiable function looks locally like an affine map, which is more regular than, and gives information about, the original function. In \cite{Preiss1987}, Preiss introduced the more general notion of tangent measure and employed it to solve some outstanding open problems in the theory of rectifiability. Tangent measures are useful because, again, they are more regular than the original measure (for example, tangent measures of rectifiable measures are flat) but one can still pass from information about the tangent measure to the original measure. As another example of the general idea, for certain non-conformal repellers the tangent sets and measures turn out to have a regular product structure which is absent in the more complicated original object; see \cite{BandtKaenmaki2013,FergusonFraserSahlsten2013}. The process of taking blow-ups of a measure or a set around a point in fact induces a natural dynamical system consisting in ``zooming in'' around the point. This opens a door to ergodic-theoretic methods, which were pioneered by Furstenberg in \cite{Furstenberg1970} and then in more developed form in \cite{Furstenberg2008}, with a comprehensive theory developed by Hochman in \cite{Hochman2010}.

In turns out that for some geometric problems, notably those involving some notion of dimension, the ``correct'' class of tangent objects to consider are not tangent measures, but the empirical distributions that appear by magnifying around a typical point. That is, the tangent objects are \emph{measures on measures}, which we call \emph{tangent distributions} (precise definitions will be given in Section \ref{sec:background} below). The reason for this is that tangent measures are defined as weak limits of magnifications around a point, but the sequence along which a tangent measure arises can be very sparse, and for many problems only the behavior on a positive proportion of scales is significant. Tangent distributions are supported on tangent measures which reflect precisely the structure of the original measure on a positive density set of scales.

Furstenberg's key innovation was the introduction of a Markov process on the $b$-adic scaling sceneries of a measure, which he called a \textit{conditional probability (CP) chain}. Since then, CP chains proved to be a key tool to solve several important problems in fractal geometry, probability theory and ergodic theory. In \cite{Furstenberg2008}, Furstenberg applied this technology to understand dimension conservation of homogeneous measures. Then Hochman and Shmerkin used them to study projections of fractal measures \cite{HochmanShmerkin2009} and the behavior of measures with respect to normal numbers \cite{HochmanShmerkin2013}. Furthermore, recently Orponen \cite{Orponen2012}, and Ferguson, Fraser and Sahlsten \cite{FergusonFraserSahlsten2013} found connections to the distance set conjecture for several dynamically defined sets. However, CP chains are defined in a discrete dyadic (or $b$-adic) fashion, and as a result the point that is being zoomed upon is not ``in the center of the frame'' which is often a disadvantage.

An alternative approach is to consider \textit{scenery flows}, in which the magnification is carried out continuously with the point in the center of the frame. Scenery flows were studied (sometimes with this name and sometimes under different names) by many authors, both for specials classes of sets and measures, and in general. We refer to \cite{Hochman2010} for a historical discussion and references. M\"orters and Preiss \cite{MortersPreiss1998} proved the surprising fact that, when dealing with Ahlfors regular measures, the tangent distributions are Palm distributions, which are distributions with a strong degree of symmetry and translation invariance. Hochman \cite{Hochman2010} then showed that a similar phenomenon holds for all Radon measures: he proved that tangent distributions for any measure are almost everywhere quasi-Palm distributions, which is a weaker notion than Palm but still represents a strong spatial invariance. Hochman named distributions which are scale-invariant and enjoy the quasi-Palm property as \emph{fractal distributions}. He also proved the remarkable fact that distributions of CP chains give rise to fractal distributions in a natural way and, reciprocally, any fractal distribution can be obtain from the distribution of a CP chain. The main definitions and results from \cite{Hochman2010} are recalled below, in Section \ref{sec:background}.

\subsection{Summary of main results}

In this work, we continue developing the theory of CP chains and fractal distributions. We state our main results in somewhat informal fashion; precise definitions and statements are postponed to the later sections.

Since fractal distributions are the cornerstone of the theory developed by Hochman in \cite{Hochman2010}, a natural problem is to study the topological structure of the family of fractal distributions.
\begin{theorem} \label{thm:FDs-are-closed}
The family of fractal distributions is closed with respect to the weak topology.
\end{theorem}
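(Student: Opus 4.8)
The plan is to verify that the two defining properties of a fractal distribution — scale invariance (i.e.\ invariance under the scenery flow $S_t$) and the quasi-Palm property — both pass to weak limits. Let $(P_n)$ be a sequence of fractal distributions converging weakly to a distribution $P$ on the space of measures. Scale invariance is the easy half: the scenery flow $(S_t)_{t\ge 0}$ acts continuously on the underlying space of (pointed) measures, so $S_t$ induces a continuous map on distributions in the weak topology, and since each $P_n$ is $S_t$-invariant for all $t$, the limit $P$ is as well. One should also record that $P$ is supported on the right space (measures normalized at the origin, with the relevant restriction-to-a-ball conventions), which is a closed condition and hence preserved under weak limits.

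The substance is the quasi-Palm property. Recall that $P$ is quasi-Palm if for every Borel set $\mathcal{A}$ of measures with $P(\mathcal{A}) = 1$, and for $P$-a.e.\ measure $\mu$, one has $\mu_{x,t} \in \mathcal{A}$ for $\mu$-a.e.\ $x$ and all (or a.e.) $t$ in an appropriate range — equivalently, that $P$ is equivalent to its ``recentered'' push-forwards in the sense made precise in Section~\ref{sec:background}. The difficulty is that quasi-Palm is a statement about null sets / absolute continuity, and such statements are notoriously \emph{not} closed under weak convergence. The way around this, following the philosophy in \cite{Hochman2010}, is to use an equivalent reformulation of the quasi-Palm property that is ``topological'' rather than ``measure-theoretic'': instead of quantifying over all full-measure sets, one quantifies over a countable family of continuous (or bounded semicontinuous) test functions and asserts an integral identity relating $P$ and its recenterings. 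Concretely, one shows quasi-Palm is equivalent to a countable collection of identities of the form $\int \int f(\mu_{x,t})\, d\mu(x)\, dP(\mu) = \int f\, dP$ (suitably cut off in $x$ and $t$), ranging $f$ over a countable dense family. Each such identity is an equality of two weakly continuous functionals of the distribution, hence is preserved in the limit; therefore $P$ inherits quasi-Palm.

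I would therefore structure the argument as: (i) state precisely the ambient space and check it is closed; (ii) check $S_t$-invariance passes to the limit by continuity of the flow-induced action; (iii) the crux — establish (or cite from \cite{Hochman2010}) the reformulation of the quasi-Palm property as a countable system of integral identities involving weakly continuous functionals, and then observe these identities are stable under weak limits; (iv) combine (ii) and (iii) to conclude $P$ is a fractal distribution. The main obstacle is step (iii): one must be careful that the recentering operation $\mu \mapsto \mu_{x,t}$ interacts well enough with weak convergence — in particular one needs the integrand to be (essentially) continuous in $\mu$ jointly with the spatial and scale parameters, and one must handle the truncation near the boundary of the ball where $\mu_{x,t}$ is only partially defined. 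A clean way to manage the boundary issue is to prove the identities only for $f$ supported away from measures that charge the boundary sphere, using that $P$-a.e.\ measure gives the sphere zero mass (a property itself stable in the limit), and then pass to the general case by a density/monotone-class argument. Once the quasi-Palm property is phrased this way, the closedness of the family of fractal distributions follows from the elementary fact that a set defined by countably many weakly continuous constraints is weakly closed.
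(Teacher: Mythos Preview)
Your proposal contains genuine gaps on all three fronts, and the paper's approach is entirely different.

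\textbf{Scale invariance and support.} You assert that the scenery flow acts continuously and that the ambient space $\mathcal{M}^*$ is closed. Neither is true, and the paper says so explicitly: ``the actions $S_t^*$ and $S_t^\square$ are discontinuous and fully defined only on the (Borel but not closed) set $\mathcal{M}^*$.'' The discontinuity comes from the normalization $\mu \mapsto \mu/\mu(B(0,e^{-t}))$, which jumps whenever the limit measure charges the sphere $\partial B(0,e^{-t})$. So even the ``easy half'' does not go through by continuity.

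\textbf{Quasi-Palm.} Your key step is to reformulate quasi-Palm as a countable list of identities $\iint f(\mu_{x,t})\,\mathrm{d}\mu(x)\,\mathrm{d}P(\mu)=\int f\,\mathrm{d}P$. But that identity is the \emph{Palm} condition, not quasi-Palm: quasi-Palm only asserts that $P$ and its recentering are \emph{equivalent} (same null sets), not equal. Mutual absolute continuity cannot be encoded as equalities of continuous functionals, and indeed the introduction states outright that ``the quasi-Palm property is not a closed property.'' No reformulation of the kind you describe is in \cite{Hochman2010}, and asserting one exists is precisely the hard, unproven step.

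\textbf{What the paper actually does.} Rather than attack these three obstructions head-on, the paper transfers the problem to CP distributions. By Hochman's correspondence (Theorem~\ref{thm:equivalence-CP-FD}), every FD is $\cent(\widehat{Q})^\square$ for some CPD $Q$. The set of CPDs \emph{is} closed (Lemma~\ref{lem:CP-distributions-are-closed}): adaptedness and Lebesgue intensity are linear/continuous conditions, and $M$-invariance passes to limits because the Lebesgue-intensity hypothesis kills the discontinuity set of $M$. Finally the map $Q\mapsto \cent(\widehat{Q})^\square$ is shown to be continuous on CPDs (Lemmas~\ref{lem:restricted-to-extended-is-continuous} and \ref{lem:centering-is-continuous}), again using the Lebesgue-intensity property to control boundary effects. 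So given $P_n\to P$, one picks CPDs $Q_n$ with $\cent(\widehat{Q}_n)^\square=P_n$, passes to a CPD accumulation point $Q$, and continuity gives $\cent(\widehat{Q})^\square=P$, whence $P\in\FD$. The whole point is that the discontinuities and the non-closedness that block your direct route are absorbed by the detour through CP distributions with Lebesgue intensity.
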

At first sight this may appear rather surprising, since the scaling flow is not continuous, its support is not closed and, more significantly, the quasi-Palm property is not a closed property. Thus, this theorem is another manifestation of the general principle that, although fractal distributions are defined in terms of seemingly strong and discontinuous geometric properties, they are in fact very robust. Besides its intrinsic interest, Theorem \ref{thm:FDs-are-closed} has a number of applications in classical problems in the realm of geometric measure theory, which we develop elsewhere; see \cite{KSS2013}. Indeed, these applications were our initial motivation to continue developing the general theory of fractal distributions.

Recall that a \emph{Choquet simplex} $\Delta$ in a locally convex topological vector space is a compact convex set with the property that each $x\in \Delta$ can by expressed in a unique way (up to measure zero sets) as an integral $\int y \, \mathrm{d} P(y)$ for some probability distribution $P$ on the extremal points of $\Delta$. It follows from Theorem \ref{thm:FDs-are-closed} and results in \cite{Hochman2010} that the family of fractal distributions is in fact a Choquet simplex, so another question arises: what kind of Choquet simplex is it?

\begin{theorem} \label{thm:Poulsen}
The family of fractal distributions is a Poulsen simplex.
\end{theorem}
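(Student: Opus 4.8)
The plan is to combine Theorem~\ref{thm:FDs-are-closed} --- which, together with the results of Hochman recalled in Section~\ref{sec:background}, shows that the family of fractal distributions is a Choquet simplex whose extreme points are exactly the ergodic fractal distributions --- with the converse regularity theorem announced in the abstract, that every fractal distribution is generated by a uniformly scaling measure. A Choquet simplex is a Poulsen simplex precisely when its extreme points are weakly dense, so it suffices to show that ergodic fractal distributions are dense among fractal distributions. By the Krein--Milman theorem the convex hull of the extreme points is weakly dense in the simplex, so, since density is transitive, it is enough to prove the following approximation statement: given ergodic fractal distributions $P_1,\dots,P_n$, nonnegative weights $\lambda_1,\dots,\lambda_n$ summing to $1$, and $\varepsilon>0$, there is an ergodic fractal distribution within distance $\varepsilon$ of $\sum_{i=1}^n\lambda_i P_i$ in a fixed metric compatible with the weak topology.

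First I would use the converse theorem to fix, for each $i$, a uniformly scaling measure $\mu_i$ generating $P_i$; after normalising, each $\mu_i$ may be taken to be a probability measure supported well inside a fixed ball about the origin. The approximating distribution will be generated by a measure $\mu$ obtained by \emph{splicing} truncated rescaled copies of the $\mu_i$ along the scaling scenery, the index being chosen at random. Concretely, fix a large block length $L$ and perform a randomised Moran-type construction: starting from a reference cube, run the truncation to depth $L$ of a copy of $\mu_{i_1}$; this yields a finite family of much smaller cubes, inside each of which one independently chooses a new index $i_2$ with law $(\lambda_i)_i$ and repeats, distributing mass so that along the scenery of $\mu$ at a $\mu$-typical point the coarse-scale sequence of indices is i.i.d.\ with law $(\lambda_i)_i$. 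Passing between this $b$-adic construction and the continuous, centred scenery flow requires the usual bookkeeping --- the magnified point is not exactly in the centre of the frame, and truncating $\mu_i$ at depth $L$ introduces an error confined to $O(1)$ scales per block --- but these are precisely the estimates underlying the proof of the converse theorem and can be reused here.

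Applying the strong law of large numbers to the i.i.d.\ sequence of block indices seen along the scenery of $\mu$ at a $\mu$-typical point, the empirical distribution of the scenery on $[0,T]$ converges as $T\to\infty$, for $\mu$-almost every point, to one and the same distribution $P^{(L)}$, equal to $\sum_{i=1}^n\lambda_i$ times the average of $\mu_i$'s scenery over a block of length $L$. Since $\mu_i$ is uniformly scaling with generated distribution $P_i$, those block averages tend to $P_i$ as $L\to\infty$, so $P^{(L)}$ lies within $\varepsilon$ of $\sum_i\lambda_i P_i$ once $L$ is large enough. By the regularity theorem, the distribution generated by a measure is a fractal distribution, so $P^{(L)}$ is one; and it is ergodic by the soft fact that if a measure generates the distribution $Q$ at almost every point then $Q$-almost every measure generates $Q$ (a Cesàro limit is unaffected by a time shift), while an invariant distribution almost every point of which is generic for it must coincide with almost every term of its ergodic decomposition, hence be ergodic. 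Informally, the i.i.d.\ choice of blocks makes the scenery flow on the relevant attractor behave, at the coarse scale, like a Bernoulli system, which is why no nontrivial invariant splitting survives. Thus $P^{(L)}$ is an ergodic fractal distribution $\varepsilon$-close to $\sum_i\lambda_i P_i$, which proves the approximation statement and hence the theorem.

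The main obstacle is the splicing construction and its analysis: one must exhibit an honest Radon measure whose continuous scaling scenery has a prescribed, well-defined statistical behaviour at almost every point, control the discrepancy between the dyadic Moran construction and the centred scenery flow, and keep the per-block boundary errors of size $O(1/L)$, so that the estimate for the distance between $P^{(L)}$ and $\sum_i\lambda_i P_i$ is genuinely rigorous and so that the generation holds in the form needed to conclude ergodicity. By comparison, the reduction to the approximation statement is soft, and once the construction is in place the ergodicity of $P^{(L)}$ and the identification of its limit are essentially automatic.
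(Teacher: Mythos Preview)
Your overall strategy --- reduce to approximating finite convex combinations of ergodic FDs by ergodic FDs, and do this via a splicing of generators --- is the same as the paper's, but the execution differs in two substantive ways. First, the paper does \emph{not} work directly with the continuous scenery flow: it passes to CP distributions via Hochman's equivalence (Theorem~\ref{thm:equivalence-CP-FD}), proves that $\CPD$ is Poulsen by a dyadic splicing argument, and transfers back to $\FD$ by the continuity of $Q\mapsto\cent(\widehat Q)^\square$ established in Section~\ref{sec:FDs-closed}. Second, the paper's splicing is \emph{periodic} rather than i.i.d.: given ergodic CPDs $R_1,\ldots,R_k$ and a rational weight vector $(t_1/q,\ldots,t_k/q)$, it splices along the deterministic $k$-periodic sequence $(Nt_1,\ldots,Nt_k)^\infty$, then averages the resulting adapted distribution over a window of length $N$ to obtain an ergodic CPD $Q^N$ that converges to $\sum (t_i/q)R_i$ as $N\to\infty$. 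Working dyadically and deterministically sidesteps exactly the bookkeeping you flag (centering, boundary effects) and makes the ergodicity and convergence proofs short and explicit (Lemmas~\ref{lem:splicing-is-ECPD} and~\ref{lem:splicings-converge}).

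There is, however, a genuine gap in your argument. The ``soft fact'' you invoke for ergodicity --- that if a measure $\mu$ generates $Q$ at almost every point, then $Q$-almost every $\nu$ generates $Q$ --- is \emph{false}. Indeed, it is directly contradicted by Theorem~\ref{thm:measure-generating-FD}: every FD, ergodic or not, is generated by some USM. If $P$ is non-ergodic and $\mu$ is a USM generating it, then $P$-almost every $\nu$ generates its own ergodic component $P_\nu$ (this is the content of the ergodic theorem applied to the flow $S^\square_t$; cf.\ the paper's Lemma preceding Proposition~\ref{CPtoFD}), which differs from $P$ on a set of positive $P$-measure. Your parenthetical about Ces\`aro limits being unaffected by time shifts shows only that the scenery of $\mu$ at $x$ and the scenery of $S^\square_s T_x\mu$ at $0$ have the same limit; it says nothing about the scenery of a $Q$-typical $\nu$ at a $\nu$-typical point.

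Your informal remark that the i.i.d.\ block structure makes the coarse dynamics Bernoulli is the right intuition, and a correct ergodicity proof for your $P^{(L)}$ would have to use this structure directly --- essentially mimicking the paper's Lemma~\ref{lem:splicing-is-ECPD}, where ergodicity is obtained by applying the ergodic theorem to the product system $((\cdot)^\N,\widetilde R^\N,\sigma)$ and showing that the constructed distribution is self-generated. That argument is not soft, and carrying it out for the continuous centred flow rather than the dyadic CP magnification is precisely the extra work that the paper's detour through $\CPD$ is designed to avoid. (A minor point: you only need generators for the \emph{ergodic} $P_i$, which follows from the ergodic theorem alone and does not require the full Theorem~\ref{thm:measure-generating-FD}; so there is no circularity there.)
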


A \emph{Poulsen simplex} is a non-trivial Choquet simplex in which extremal points are dense. A classical result of Lindenstrauss, Olsen, and Sternfeld \cite{LOS1978} states that there is in fact a unique Poulsen simplex up to affine homeomorphism. In that paper two other striking properties of the Poulsen simplex are established: any affine homeomorphism between two proper faces of the Poulsen simplex may be extended to a homeomorphism of the whole simplex (homogeneity), and any metrizable simplex is affinely homeomorphic to a face of the Poulsen simplex. The Poulsen simplex is a common object in ergodic theory, as the space of invariant measures for many dynamical systems is Poulsen; this is often a manifestation of some kind of hyperbolic behavior.

In our case, the set of extremal points is precisely the collection of ergodic fractal distributions with respect to the scenery flow. We remark
that the dense set we exhibit consists of distributions of random self-similar measures, where the self-similarity is with respect to a $b$-adic grid. This potentially allows to prove certain statements for arbitrary measures or distributions by reducing it to this fairly concrete and well-behaved class.
The construction of these self-similar measures is a special case of what we term the \emph{splicing} of scales. Roughly speaking, this consists in pasting together a sequence of measures along dyadic scales; see Section \ref{subsec:splicing} for more details. Splicing is often employed to construct sets or measures with a given property based on properties of the component measures. For example, in \cite{SchmelingShmerkin2010}, splicing was used to investigate the dimensions of iterated sums of a Cantor set, and Hochman \cite[Section 8.3]{Hochman2010} employed it (under the name of \textit{discretization}) to construct examples of uniformly scaling measures with non-ergodic limit geometry and bad projection properties.

Given a measure $\mu$, we can study its geometric properties via its tangent distributions. The situation is especially nice when at $\mu$ almost all points there is a single tangent distribution, and even nicer when all these tangent distributions coincide. This leads us to the concepts of \textit{uniformly scaling measures} (USMs) and \textit{generated} distributions; these concepts were first defined by Gavish \cite{Gavish2011} and investigated further by Hochman \cite{Hochman2010}; see Section \ref{sec:normscenery} for more details.

Uniformly scaling measures are geometrically much more regular than arbitrary measures, for example in the behavior of their projections \cite{Hochman2010,HochmanShmerkin2009} and the distance sets of their supports \cite{FergusonFraserSahlsten2013}. Examples of USMs are many conformal and non-conformal constructions, both deterministic and random \cite{FergusonFraserSahlsten2013,Gavish2011,Hochman2010}, measures invariant under $x\to px\bmod 1$ on the circle \cite{Hochman2012}, and the occupation measure of Brownian motion in dimension $d\ge 3$ \cite{Gavish2011}.

It seems natural to ask what kind of distributions can arise as the (unique) distribution generated by a USM. Hochman \cite{Hochman2010} proved the striking fact that generated distributions are always fractal distributions. We provide a converse to this:

\begin{theorem} \label{thm:measure-generating-FD}
Every fractal distribution is generated by some uniformly scaling measure.
\end{theorem}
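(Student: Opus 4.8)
The plan is to construct, for a given fractal distribution $P$, a single measure $\mu$ whose scenery flow empirically converges to $P$ at $\mu$-almost every point. The natural strategy is to use the other half of Hochman's correspondence: by the results recalled in Section~\ref{sec:background}, $P$ is (up to the usual identifications) the distribution associated with an ergodic, or at worst a general, CP-chain, and in particular $P$ can be realized as a barycenter of ergodic fractal distributions by the ergodic decomposition. So the first reduction is to treat the ergodic case first, and then handle the general case by splicing together realizations of the ergodic components in the right proportions along a density-one sequence of scales.

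For the ergodic case, the idea is to pick a $P$-typical measure $\nu$ (in the sense that the scenery flow orbit of $\nu$ equidistributes for $P$, which holds $P$-a.e.\ by the pointwise ergodic theorem applied to the flow) and then build $\mu$ as a \emph{self-similar-in-law} object living on a $b$-adic grid: partition space into $b$-adic cubes, and inside each cube of generation $n$ place a rescaled copy of $\nu$ restricted to an appropriate sub-cube, iterating this construction through the scales so that, when one zooms into $\mu$ around a typical point, one sees longer and longer pieces of the scenery of $\nu$. Because $\nu$ is $P$-generic for the flow, the Cesàro averages of the sceneries of $\mu$ along almost every point will converge to $P$; the $b$-adic structure is what lets us paste the pieces together coherently, and the quasi-Palm/scaling invariance of $P$ is what guarantees that the ``seams'' between generations do not contribute in the limit. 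This is exactly the \emph{splicing} construction alluded to in the introduction, and the self-similarity with respect to the $b$-adic grid is the reason the dense set in Theorem~\ref{thm:Poulsen} consists of random self-similar measures.

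The main obstacle, I expect, is controlling the boundary effects of the $b$-adic grid: when one zooms in continuously (as opposed to along $b$-adic scales) around a point of $\mu$, most magnifications will not be centered at a grid point, so one must show that the fraction of scales at which the magnified picture is ``corrupted'' by a nearby cube boundary is negligible, and that off these bad scales the empirical distribution of the scenery of $\mu$ tracks that of $\nu$. This requires a quantitative comparison between the continuous scenery flow and the discrete $b$-adic magnification operator, together with a Borel--Cantelli or martingale argument to show that for $\mu$-a.e.\ point the corruption happens only on a density-zero set of times. A secondary, more bookkeeping-type difficulty is the non-ergodic case: one has to splice realizations of the different ergodic components of $P$ along blocks of scales whose relative lengths have the prescribed frequencies, while keeping the transitions between blocks density-zero, so that the time averages converge to the correct barycenter rather than oscillating.

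Finally, once $\mu$ is constructed, the verification that it is uniformly scaling and generates exactly $P$ is, modulo the above estimates, a matter of identifying the limit: by construction every subsequential limit of the scenery distributions of $\mu$ at a typical point is supported on scenery-flow orbits coming from $\nu$ (or from the various components), and by the genericity of $\nu$ for $P$ the only such limit is $P$ itself. One then invokes the regularity theorem from \cite{Hochman2010} (that generated distributions are automatically fractal) as a consistency check, though here it is not needed since we started from a fractal $P$. I would present the ergodic case in full detail and then indicate the modifications for the general case, since the splicing mechanism is identical and only the choice of which component to insert at each scale changes.
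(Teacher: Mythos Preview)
Your outline has the right building blocks---ergodic case first, then splice for the general case---but there is one minor inefficiency and one genuine gap.

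The minor point: in the ergodic case you do far more than necessary. If $P$ is an EFD then, by \cite[Theorem~3.9]{Hochman2010} (recalled in the lemma following Theorem~\ref{thm:measure-generating-FD-2}), $P$-almost every measure $\nu$ is \emph{already} a USM generating $P$; the quasi-Palm property transfers the generic behaviour of the orbit at the origin to $\nu$-typical points. So the ergodic case is one line, and there is no need to build a self-similar object out of rescaled copies of $\nu$. Your construction, besides being unnecessary, is not obviously correct: pasting copies of $\nu$ into $b$-adic cells produces a measure whose sceneries see shifted pieces of $\nu$ through $b$-adic windows, and verifying equidistribution for $P$ would require essentially reproving the quasi-Palm argument you are trying to avoid.

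The substantive gap is in the non-ergodic case. You propose to ``splice realizations of the different ergodic components of $P$ along blocks of scales whose relative lengths have the prescribed frequencies''. This is fine when the ergodic decomposition is finite (or atomic), but a general FD may have a \emph{continuous} ergodic decomposition, in which case there are uncountably many components and no frequencies to prescribe; one cannot splice uncountably many measures into a single $\mu$.

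The paper avoids this by a different reduction. It first passes to the CP side via centering (Proposition~\ref{CPtoFD}), so it suffices to find a measure CP-generating a given CPD $Q$. Then, instead of decomposing $Q$ into ergodic parts, it invokes the Poulsen density established earlier (Proposition~\ref{prop:denseECPD}): ergodic CPDs are \emph{dense}, so $Q=\lim_i Q^i$ for a \emph{sequence} of ergodic CPDs. Each $Q^i$ is CP-generated by some $\mu^i$ (the one-line ergodic case), and the main work is to show that the set of CP-generated CPDs is \emph{closed}. This is done by splicing the $\mu^i$ along a rapidly growing sequence of scale-blocks $(n_i)$ chosen so that within the $i$-th block the CP scenery of the spliced measure tracks $\overline{Q}^i$ to within $\varepsilon_i$, while the transition zones have density zero; since $Q^i\to Q$, the scenery averages converge to $\overline{Q}$. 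Thus the splicing is of a convergent sequence, not of the ergodic components, and this is precisely what sidesteps the continuity obstruction. Working on the CP side also dissolves the boundary-effect issue you anticipated: the CP magnification is discrete and dyadic, and the passage back to the continuous scenery flow is handled once and for all by the centering map.
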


Again, our motivation for this result arose from our applications to problems in geometric measure theory; see \cite{KSS2013}. Roughly speaking, our approach there is to study families of measures (for example, measures satisfying certain porosity condition) through the family of tangent distributions to those measures at typical points. A key last step is then to pass from the information gleaned on the fractal distribution side back to information about measures -- this is where Theorem \ref{thm:measure-generating-FD} comes in.

Recall that a property is \emph{Baire generic} if it is satisfied everywhere except possibly in a set of first category, that is, a countable union of sets whose closure has empty interior. A recurrent topic in geometric measure theory and analysis is the behavior of Baire generic objects, such as sets, measures, or functions. For example, in recent years many authors have explored the fractal and multifractal behavior of generic Borel measures; see e.g. \cite{Bayart2012,Bayart2013,BuczolichSeuret2010,Sahlsten2012} and references therein. In this context it seems very natural to study the tangent structures of generic measures. O'Neil \cite{ONeil1995} and Sahlsten \cite{Sahlsten2012} proved that a Baire generic measure has \textit{all} Borel measures as tangent measures at almost every point. Even though Theorem \ref{thm:measure-generating-FD} concerns measures which have a single tangent distribution at typical points, perhaps surprisingly, it gives us an application which shows that the exact opposite holds for a generic measure:

\begin{theorem} \label{thm:Baire}
For a Baire generic Radon measure $\mu$ on $\R^d$, the set of tangent distributions is the set of all fractal distributions at $\mu$ almost every $x$.
\end{theorem}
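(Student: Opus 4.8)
The plan is to combine Theorem~\ref{thm:measure-generating-FD} with the Baire category machinery developed by O'Neil and Sahlsten for tangent measures, upgrading it to the level of tangent distributions. Fix the space $\mathcal{M}$ of Radon measures on $\R^d$ (suitably normalized, e.g.\ restricted to the unit ball with the weak-* topology so that it is a Polish space) and let $\mathcal{FD}$ denote the set of fractal distributions, which by Theorem~\ref{thm:FDs-are-closed} is a compact metrizable subset of the space of distributions. Since $\mathcal{FD}$ is compact metrizable, fix a countable dense subset $\{P_1, P_2, \ldots\}$ of $\mathcal{FD}$. The goal is to show that for a residual set of $\mu$, at $\mu$-a.e.\ $x$ the set $\mathcal{TD}(\mu,x)$ of tangent distributions at $x$ equals all of $\mathcal{FD}$; since $\mathcal{TD}(\mu,x)$ is always a closed subset of $\mathcal{FD}$ (this follows from Hochman's regularity theorem together with Theorem~\ref{thm:FDs-are-closed}), it suffices to show that it is dense, i.e.\ that each $P_j$ is a tangent distribution at $\mu$-a.e.\ $x$.

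The core step is to show, for each fixed $j$ and each $\epsilon > 0$, that the set
\[
G_{j,\epsilon} = \left\{ \mu \in \mathcal{M} : \mu\bigl(\{ x : \dist(P_j, \mathcal{TD}(\mu,x)) < \epsilon \}\bigr) = \mu(\text{everything}) \right\}
\]
contains a dense open set, or more precisely that a residual set of $\mu$ lies in $\bigcap_{j}\bigcap_{\epsilon \in \Q^+} G_{j,\epsilon}$. Density is where Theorem~\ref{thm:measure-generating-FD} enters: given an arbitrary $\mu$ and a weak neighborhood of it, I want to perturb $\mu$ to a measure $\nu$ that, on a large portion of its mass and at many scales, looks locally like a uniformly scaling measure generating $P_j$. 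Concretely, I would take a uniformly scaling measure $\nu_j$ generating $P_j$ (Theorem~\ref{thm:measure-generating-FD}), decompose the ambient region into small dyadic cubes, and on a positive-density-in-scale collection of them place rescaled copies of $\nu_j$ while leaving $\mu$ essentially unchanged at coarse scales so that $\nu$ stays close to $\mu$ in the weak topology. Because $\nu_j$ is uniformly scaling with generated distribution $P_j$, at $\nu$-typical points inside those copies the magnification process spends a positive proportion of scales near $P_j$, so $P_j \in \mathcal{TD}(\nu, x)$ for those $x$; choosing the construction carefully (a splicing-type argument, as in Section~\ref{subsec:splicing}) makes this hold for $\nu$-a.e.\ $x$. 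The openness part is more delicate because $\mathcal{TD}(\mu,x)$ depends on $\mu$ in a genuinely discontinuous way; the standard workaround, following O'Neil--Sahlsten, is to replace $G_{j,\epsilon}$ by a countable union over $n$ of the sets $G_{j,\epsilon,n}$ of measures for which the relevant empirical scale-averages come $\epsilon$-close to $P_j$ already within the first $n$ scales on a set of mass $> \mu(\cdot) - 1/n$, and to check that each $G_{j,\epsilon,n}$ is open (a finite-scale, hence weakly continuous, condition up to arbitrarily small error) and that their union is dense by the construction above. Intersecting over $j$, over rational $\epsilon$, and taking the appropriate $F_\sigma$/$G_\delta$ manipulations yields a residual set with the desired property.

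The main obstacle I anticipate is the openness/continuity step: tangent distributions are defined through long-time averages of a flow whose support is not closed and which does not depend continuously on the measure, so one cannot naively argue that ``$\mu \mapsto \mathcal{TD}(\mu,x)$'' is lower semicontinuous. The resolution is to phrase everything in terms of finite-time empirical distributions $\langle \mu \rangle_{x,T}$, which \emph{are} continuous in $\mu$ for fixed $x,T$ on the relevant set of measures, and to arrange the dense perturbations $\nu$ so robustly that nearby measures (in the weak topology) still exhibit the approximate scaling behavior at the prescribed finite scales. A secondary technical point is ensuring the perturbation $\nu$ has the property at $\nu$-a.e.\ point and not merely on a positive-measure set; this is handled by iterating the splicing construction across all dyadic cubes down to all sufficiently small scales, so that every point eventually falls into infinitely many "good" cubes, combined with a Borel--Cantelli or density-point argument. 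Once these are in place, the proof reduces to a routine Baire category bookkeeping: $\mathcal{TD}(\mu,x) \subseteq \mathcal{FD}$ always (Hochman), $\mathcal{TD}(\mu,x)$ is closed, and the residual set forces denseness in $\mathcal{FD}$, hence equality.
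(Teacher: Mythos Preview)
Your overall architecture is right and matches the paper's: pick a countable dense family of target distributions, for each one define a family of ``approximate'' sets indexed by a finite scale and a tolerance, show each such set has dense interior, and intersect. Where you diverge from the paper is in the choice of arena. You work directly with the scenery flow and fractal distributions, using Theorem~\ref{thm:measure-generating-FD} to produce a USM generator for each $P_j$. The paper instead proves everything first for CP distributions (Proposition~\ref{prop:genericCP}): it takes a countable dense set $\mathcal{S}\subset\ECPD$ (density by the Poulsen property), gets generators from the ergodic theorem alone, runs the Baire argument for micromeasure distributions, and only at the very end passes to $\FD$ via the centering operation and Proposition~\ref{CPtoFD}.

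This is not a cosmetic difference; it is exactly the step you flag as ``the main obstacle''. For CP sceneries the finite-scale map $\nu\mapsto\overline{\langle\nu,x\rangle}_N$ depends on $\nu$ only through the masses of finitely many dyadic cubes, so it is genuinely continuous on the (dense) class $\mathcal{F}$ of measures not charging dyadic boundaries. That makes both the openness lemma (the paper's Lemma~\ref{lem:opengeneric}) and the density lemma (Lemma~\ref{lem:densegeneric}, via the discretization $\tau_n=\sum_{D\in\mathcal{D}_n}\tau(D)T_D^{-1}\nu$) clean. In your setting, $\langle\mu\rangle_{x,T}$ involves the restrictions $\mu_{B(x,e^{-t})}$ for a continuum of $t$, and although for a fixed $\mu$ only countably many of these are discontinuous, you have not specified a class of measures, stable under your splicing perturbations and dense in $\mathcal{M}$, on which the finite-time map is continuous. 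This can likely be pushed through, but it is real extra work that your proposal does not do; the paper sidesteps it entirely by discretizing first and centering last.

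A second, smaller point: you invoke Theorem~\ref{thm:measure-generating-FD} to obtain generators, but the paper needs only that \emph{ergodic} CP distributions have generators, which is immediate from the ergodic theorem; the Poulsen property then supplies density. So your route also uses a strictly harder input than necessary.
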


We also obtain an analogous result for CP distributions, see Proposition \ref{prop:genericCP}. These results are in some sense expected, since there is a heuristic principle that says that Baire generic objects behave ``as wildly as possible''.

The rest of the paper is organized as follows. In Section \ref{sec:background} we recall the main elements and results of Hochman's theory. Precise versions of Theorems \ref{thm:FDs-are-closed}, \ref{thm:Poulsen}, \ref{thm:measure-generating-FD}, and \ref{thm:Baire} are stated and proved in Sections \ref{sec:FDs-closed}, \ref{sec:Poulsen}, \ref{sec:USM}, and \ref{sec:Baire-generic}, respectively. In the appendix, we discuss the independence of our results from the chosen norm.

\section{Scenery flow, fractal and CP distributions}
\label{sec:background}

In this section, we recall the main definitions and results from Hochman's work \cite{Hochman2010}, and provide some minor extensions to the theory developed there. We use much of Hochman's notation but we also introduce some new terms, such as tangent distribution and micromeasure distribution.

\begin{notation*}Equip $\R^d$ with the norm $\| x \| = \max_i|x_i|$ and the induced metric. The closed ball centered at $x$ with radius $r>0$ is denoted by $B(x,r)$. In particular, we write $B_1=B(0,1) = [-1,1]^d$. Given a metric space $X$, we denote the family of all Borel probability measures on $X$ by $\PP(X)$, and the family of all Radon measures on $X$ by $\MM(X)$. When $X$ is locally compact, $\MM(X)$ and $\PP(X)$ are endowed with the weak topology. Recall that $\mu_n\to\mu$ weakly if $\int f\,\mathrm{d}\mu_n\to \int f\,\mathrm{d}\mu$ for all continuous functions $f\colon X\to\R$ of compact support.

Whenever we consider convergence in a space of probability measures, it will be implicitly understood that we are considering the weak convergence. When $X=\R^d$, we write $\MM=\MM(\R^d)$. The space $\MM$ is metrizable, complete, and separable. If $X$ is compact, then also $\PP(X)$ is compact.

Following terminology from \cite{Hochman2010}, we refer to elements of $\PP([-1,1]^d)$ or $\MM$ as \emph{measures}, and to elements of $\mathcal{P}(\mathcal{P}([-1,1]^d))$ and $\mathcal{P}(\MM)$ as \emph{distributions}. Measures will be denoted by lowercase Greek letters $\mu,\nu$, etc and distributions by capital letters $P,Q$, etc.  We use the notation $x \sim \mu$ if a point $x$ is chosen randomly according to a measure $\mu$. Moreover, write $\mu \sim\nu$ if the measures $\mu$ and $\nu$ are \textit{equivalent}, that is, they have the same null-sets. If $X$ and $Y$ are metric spaces, $\mu\in\PP(X)$ and $f\colon X\to Y$ is a Borel map, then the \emph{push-down} $f\mu$ is the measure defined via $f\mu(A)=\mu(f^{-1}A)$.
\end{notation*}

\subsection{Ergodic-theoretic preliminaries}

In this article, we make use of many standard definitions and facts from ergodic theory which we briefly recall here for the reader's convenience. Good general references are the books of Einsiedler and Ward \cite{EinsiedlerWard2011} and Walters \cite{Walters1982}.

Let $(X,\mathcal{B},\mu)$ be a probability space. We say that a transformation $T\colon X\to X$ preserves $\mu$ if it is $\mathcal{B}$-measurable and $T\mu=\mu$; the set of all such transformations is a semigroup under composition. A \emph{measure-preserving system} (m.p.s.) is a tuple $(X,\mathcal{B},\mu,T)$ where $(X,\mathcal{B},\mu)$ be a probability space and $T$ is an action of a semigroup by transformations that preserve $\mu$. That is, there is a semigroup $S$ and for each $s\in S$ there is a map $T_s\colon X\to X$ that preserves $\mu$, such that $T_{s+s'}=T_s \circ T_{s'}$. In this article, the underlying semigroup will always be one of $\N$, $\Z$ (in which case we speak of measure-preserving maps,  since the action is determined by $T:=T_1$), or $\R^+, \R$ (in which case we speak of \textit{flows}). Moreover, for us $X$ will always be a metric space and $\mathcal{B}$ will be the Borel $\sigma$-algebra on $X$ (thus no explicit reference will be made to it). In the following we always assume that we are in this setting to avoid unnecessarily technical assumptions.

A measure-preserving system is \emph{ergodic} if any set $A \in \mathcal{B}$ with $\mu(T_s^{-1}A \triangle A)=0$ for all $s$ has either zero of full $\mu$-measure. For a given action $T$ on a space $X$, ergodic measures are the extremal points of the convex set of all probability measures which are preserved by $T$.  The \emph{ergodic theorem} for discrete actions says that if $f\in L^1(\mu)$ and the system is ergodic, then
\[
 \lim_{n\to\infty} \frac1n \sum_{i=0}^{n-1} f(T^i x) = \int f\, \mathrm{d}\mu \quad\textrm{for }\mu\textrm{ almost all } x.
\]
For flows, the same holds replacing the left-hand side by $\lim_{t \to \infty} \frac1t \int_0^t f \circ T_s \,\mathrm{d}s$.

Given two measure-preserving systems $(X,\mu,T)$ and $(X',\mu',T')$, a map $\pi\colon X\to X'$ is called a \emph{factor map} if the underlying semigroups coincide, $\pi\mu=\mu'$ and $\pi$ intertwines the actions of the semigroups: $\pi T_s = T'_s \pi$ for all $s$. In this case we also say that $(X',\mu',T')$ is a factor of $(X,\mu,T)$ . The factor of an ergodic system is ergodic. When $\pi$ is a measure-theoretical isomorphism, we say that the systems $(X,\mu,T)$ and $(X',\mu',T')$ are \emph{isomorphic}.

Let $(X,\mu)$ be a metric probability space, and consider the product spaces $(X^\N,\mu^\N)$ and $(X^\Z,\mu^\Z)$. The shift map $T$ defined by $T((x_i)_i)=(x_{i+1})_i$ acts on both spaces. This maps preserves the product measure and the resulting system is always ergodic. In general, there may be many other measures on $X^\N$ or $X^\Z$ that are preserved by the shift. If $(X^\N,\mu,T)$ is a m.p.s., there is always a m.p.s.\ $(X^\Z,\widehat{\mu},T)$ such that the former is a factor of the latter under the natural projection map (on the natural extension the $\sigma$-algebra is not the Borel $\sigma$-algebra but the smallest $\sigma$-algebra that makes the projection $x\mapsto x_0$ measurable, but this exception to our convention will cause no trouble). This is called the \emph{two-sided extension} or natural extension for shift spaces. The two-sided extension is ergodic if and only if the one-sided version is ergodic. Furthermore, any discrete m.p.s.\ $(X,\mu,T)$ can always be represented as a shift space via the identification $x\to (T^i x)_{i=0}^\infty$ (that is, the measure on $X^\N$ is the push-down of $\mu$ under this map).

A general m.p.s.\ $(X,\mathcal{B},\mu,T)$ can be decomposed into (possibly uncountably many) ergodic parts according to the \emph{ergodic decomposition} theorem: there exists a Borel map $x\to \mu_x$ from $X$ to $\mathcal{P}(X)$ such that for $\mu$ almost all $x$ it holds that each $(X,\mathcal{B},\mu_x,T)$ is measure-preserving and ergodic, and $\mu = \int \mu_x \,\mathrm{d}\mu(x)$. Moreover, this map is unique up to sets of zero $\mu$-measure. The measures $\mu_x$ are called the \emph{ergodic components} of $\mu$.

A standard way to build measure-preserving flows from discrete systems is via \emph{suspensions}; we only consider the case of a constant roof function. Let $(X,\mu,T)$ be a discrete m.p.s.\ (the base) and let $r>0$ (the height). Write $\widehat{X}=X\times [0,r)$, $\widehat{\mu}=\mu\times\mathcal{L}$, where $\mathcal{L}$ is normalized Lebesgue measure on $[0,r)$, and set $\widehat{T}_t(x,s)=(x,s+t)$ if $s+t<r$ and $\widehat{T}_t(x,r-t)=(Tx,0)$. By iterating this defines a flow (called \emph{suspension flow}) for all $t>0$, which indeed preserves the measure $\widehat{\mu}$. Moreover, the m.p.s.\ $(X,\mu,T)$ is ergodic if and only if $(\widehat{X},\widehat{\mu},\widehat{T})$ is ergodic.

\subsection{Normalizations and the scenery flow}
\label{sec:normscenery}

 If $\mu\in\MM$ and $\mu(A)>0$, then $\mu|_A$ is the restriction of $\mu$ to $A$ and, provided also $\mu(A)<\infty$, we denote by $\mu_A$ the restriction normalized to be a probability measure, that is
\[
\mu_A(B) =\frac{1}{\mu(A)}\mu(A\cap B).
\]
For any $\mu\in\MM$ for which $\mu(B_1)>0$ we define the normalization operations $*,\square$ in $\cM$ by
\begin{eqnarray*}
\mu^* &:=& \frac{1}{\mu(B_1)}\mu,\\
\mu^\square &:=& \mu_{B_1} = \mu^*|_{B_1}.
\end{eqnarray*}
We define the translation and scaling actions on measures by
\begin{eqnarray*}
T_x\mu(A) &=& \mu(A-x),\\
S_t\mu(A) &=& \mu(e^{-t}A).
\end{eqnarray*}
The reason for the exponential scaling is to make $S_t$ into a partial action of $\R$ into $\MM$. Whenever $R$ is an operator on $\MM$, we write $R^*,R^\square$ for the corresponding operator obtained by post-composition with the respective normalizations. So, for example, $T_x^\square\mu=(T_x\mu)^\square$. We also write
$$\MM^*=\{\mu\in\MM:0\in\spt\mu\}$$
and $\MM^\square=\PP(B_1)$.

We note that the actions $S_t^*$ and $S_t^\square$ are discontinuous and fully defined only on the (Borel but not closed) set $\MM^*$. Nevertheless, the philosophy behind many of the results in \cite{Hochman2010} is that in practice they behave in a very similar way to a continuous action on a complete metric space (compact in the case of $S_t^\square$).

\begin{definition}[Scenery flow and tangent measures] We call the flow $S_t^\square$ acting on $\MM^*$ the \textit{scenery flow} at $0$. Given $\mu\in\MM$ and $x\in\spt\mu$, we consider the one-parameter family
\[
\mu_{x,t} := \mu_{x,t}^\square = S_t^\square(T_x\mu)
\]
generated by the action of $S_t^\square$ on $T_x \mu$ and call it the \emph{scenery of $\mu$ at $x$}. Accumulation points of this scenery will be called \emph{tangent measures} of $\mu$ at $x$ and the family of tangent measures of $\mu$ at $x$ is denoted by $\Tan(\mu,x)$.
\end{definition}

\begin{remark}
We deviate slightly from the usual definition of tangent measures, which corresponds to taking weak limits of $S_t^*(T_x\mu)$ instead, i.e.\ without restricting the measures.
\end{remark}

One of the main ideas of \cite{Hochman2010}, which we further pursue in \cite{KSS2013}, is that, as far as certain properties of a measure are concerned (including their dimensions), the ``correct'' tangent structure to consider is not a single limit of $\mu_{x,t_k}$ along some subsequence, but the whole statistics of the scenery $\mu_{x,t}$ as $t\to\infty$.

\begin{definition}[Scenery and tangent distributions] \label{def:scenery-and-tangent-distributions}
The \emph{scenery distribution} of $\mu$ up to time $T$ at $x$ is defined by
$$
  \langle \mu \rangle_{x,T} := \frac{1}{T} \int_0^T \delta_{\mu_{x,t}} \,\mathrm{d}t
$$
for all $0 \le T < \infty$. Any weak limit of $\langle \mu \rangle_{x,T}$ in $\cP(\cM^\square)$ for $T \to \infty$ is called a \emph{tangent distribution} of $\mu$ at $x$. The family of tangent distributions of $\mu$ at $x$ will be denoted $\TD(\mu,x)$.
\end{definition}

Here the integration makes sense since we are on a convex subset of a topological linear space. Since the space of distributions $\cP(\cM^\square)$ is compact, $\TD(\mu,x)$ is always a non-empty compact set at $x \in \spt \mu$. Notice also that every $P \in \TD(\mu,x)$ is supported on $\Tan(\mu,x)$.

\begin{definition}[Generated distributions and uniformly scaling measures] \label{def:usm}
We say that a measure $\mu$ \emph{generates} a distribution $P \in \cP(\cM^\square)$ at $x$ if
$$\TD(\mu,x)=\{P\}.$$
Furthermore, $\mu$ \emph{generates $P$} if it generates $P$ at $\mu$ almost every point. In this case, we say that $\mu$ is a \emph{uniformly scaling measure} (USM).
\end{definition}

If $\mu$ is a uniformly scaling measure, then, intuitively, it means that the collection of views $\mu_{x,t}$ will have well-defined statistics as we zoom-in into smaller and smaller neighborhoods of $x$.

\subsection{Fractal distributions}

It has been observed in various forms that tangent measures and distributions have some kind of additional spatial invariance (the simplest form of this is perhaps the well-known fact that ``tangent measures to tangent measures are tangent measures''; see \cite[Theorem 14.16]{Mattila1995}). A very sharp and powerful formulation of this principle was obtained in \cite{Hochman2010}. In order to state it, we need some additional definitions.

\begin{definition}[Fractal distributions]
Let $P\in \PP(\MM)$. We say that the distribution $P$ is:
\begin{enumerate}
\item \emph{scale-invariant} if it is supported on $\MM^*$, and is invariant under the action of the semigroup $S_t^*$, i.e.
$$P(\left(S^*_t\right)^{-1} \AA) = P(\AA)$$
for all Borel sets $\AA \subset \PP(\MM^*)$ and all $t >0$.
\item \emph{quasi-Palm} if a Borel set $\cA \subset \cM$ satisfies $P(\cA) = 1$ if and only if $P$ almost every $\nu$ satisfies $T_z^*\nu \in \cA$ for $\nu$ almost every $z$.
\item a \emph{fractal distribution} (FD) if it is scale-invariant and quasi-Palm.
\item an \emph{ergodic fractal distribution} (EFD) if it is a fractal distribution and it is ergodic under the action of $S_t^*$.
\end{enumerate}
Write $\FD$ and $\EFD$ for the set of all fractal distributions and ergodic fractal distributions, respectively.
\end{definition}

\begin{remark}
\begin{enumerate}
\item Hochman \cite{Hochman2010} used an alternative definition for the quasi-Palm property. The requirement was that any bounded open set $U$ containing the origin satisfies
$$
  P \sim \iint_U \delta_{T^*_x \mu} \, \textrm{d}\mu(x)\, \textrm{d}P(\mu).
$$
Both definitions are easily seen to agree, since two distributions are equivalent if and only if they have the same sets of full measure.

\item A distribution $P$ is called \textit{Palm} if for any open set $U$ containing the origin, we have
$$P = \iint_U \delta_{T_x \mu} \, \textrm{d}\mu(x)\, \textrm{d}P(\mu)$$
with finite intensity,
$$\int \mu(B_1) \, \mathrm{d}P(\mu) < \infty.$$
\end{enumerate}
\end{remark}

Note that in the above definitions $P$ is a distribution on $\PP(\MM)$, i.e.\ on measures with unbounded support. Most of the time we will need to deal with distributions supported on $\PP(B_1)$ instead (the main advantage being that this is a compact metrizable space).

\begin{notation}[Restricted distributions] Given $P$, we write $P^\square$ be the push-down of $P$ under $\mu\to\mu^\square$. Slightly abusing notation, whenever $P$ is an FD/EFD, we will also refer to $P^\square$ as an FD/EFD. Note that in this case $P^\square$ is $S^\square$-invariant, but the quasi-Palm is not properly defined for $P^\square$. When we want to emphasize whether we are talking about $P$ or $P^\square$, we will call the former the \emph{extended version} of $P$ and the latter the \emph{restricted version} of $P$.
\end{notation}

This abuse of notation  is justified by the following result; see \cite[Lemma 3.1]{Hochman2010}.

\begin{lemma}\label{lem:extended-restricted-correspondence}
The action $P\to P^\square$ induces a 1-1 correspondence between $S^*$-invariant and $S^\square$-invariant distributions.
\end{lemma}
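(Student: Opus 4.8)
The statement packages three claims: that $P\mapsto P^\square$ sends $S^*$-invariant distributions to $S^\square$-invariant ones, that it is injective on this class, and that it is onto the $S^\square$-invariant distributions. I would treat these in turn, with surjectivity as the only substantial point. For well-definedness, note that $\pi\colon\MM^*\to\MM^\square$, $\pi(\mu)=\mu^\square$, intertwines the $\R^+$-semigroup actions $S^*$ and $S^\square$: a direct computation from $S_t\nu(A)=\nu(e^{-t}A)$ gives, for $t\ge 0$ and $\mu\in\MM^*$, the identity $(S_t\mu)^\square=S^\square_t(\mu^\square)$, the only point being that $e^{-t}B_1\subseteq B_1$, so ``restrict then scale'' and ``scale then restrict'' agree, all normalizing denominators being positive because $0\in\spt\mu$. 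Hence $P^\square=\pi P$ is $S^\square$-invariant whenever $P$ is $S^*$-invariant. I would also record a normalization remark here: since $0\in\spt\mu$, the operator $S^*_t$ always rescales the $B_1$-mass to $1$, so any $S^*$-invariant $P$ is carried by $\{\mu\in\MM^*:\mu(B_1)=1\}$, on which $\mu^\square=\mu|_{B_1}$; this removes the apparent ``free scalar'' $\mu(B_1)$.

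For injectivity, recall that an $S^*$-invariant $P$ is determined by the integrals $\int \phi\bigl(\int g_1\,\mathrm d\mu,\dots,\int g_k\,\mathrm d\mu\bigr)\,\mathrm dP(\mu)$ with $\phi$ continuous and each $g_i$ continuous of compact support, say contained in $B(0,R)$. Given such an integrand, use $S^*$-invariance to replace $\mu$ by $S^*_t\mu$ with $t\ge\log R$; then $\int g_i\,\mathrm d(S^*_t\mu)=\mu(B(0,e^{-t}))^{-1}\int_{B_1}g_i(e^tx)\,\mathrm d\mu^\square(x)$ depends only on $\mu^\square$ (the denominator is positive because $P^\square$, being $S^\square$-invariant, is supported on measures with $0$ in their support), so the whole integral equals $\int\Phi\,\mathrm dP^\square$ for an explicit bounded measurable $\Phi$. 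Since such functions form a measure-determining class on $\MM$, $P$ is determined by $P^\square$.

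Surjectivity is the main task. Given $S^\square$-invariant $Q$, first observe that $Q$ is carried by measures having $0$ in their support, since an invariant measure of a partially defined flow lives on the set where the flow runs for all time and $S^\square_t\nu$ exists only when $\nu(B(0,e^{-t}))>0$. Pass to the natural (two-sided) extension $(\widehat{\MM^\square},\widehat Q,\widehat S)$, viewed as an $\R$-flow whose points are consistent bi-infinite orbits $\omega=(\omega_t)_{t\in\R}$ with $S^\square_s\omega_t=\omega_{t+s}$. From such an $\omega$ I would reconstruct a global measure by zooming \emph{out}: the relation $(S_1\omega_{-n-1})^\square=\omega_{-n}$ shows that the measures $b_nS_n\omega_{-n}$, with $b_0=1$ and $b_{n+1}=b_n/S_1\omega_{-n-1}(B_1)$, agree on the nested balls $B(0,e^n)$, hence patch to a Radon measure $\mu_\omega$ with $\mu_\omega|_{B_1}=\omega_0$; therefore $\mu_\omega\in\MM^*$ for $\widehat Q$-a.e.\ $\omega$, the assignment $\omega\mapsto\mu_\omega$ is Borel (a countable limit of Borel maps), and one checks that it carries $\widehat S$ to $S^*$. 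Then $P:=(\omega\mapsto\mu_\omega)\widehat Q$ is $S^*$-invariant, and $\mu_\omega^\square=\omega_0$ gives $P^\square=(\omega\mapsto\omega_0)\widehat Q=Q$. Moreover, by uniqueness of the natural extension this same $P$ is the only $S^*$-invariant preimage of $Q$, which gives a second proof of injectivity.

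The main obstacle lies entirely in the surjectivity step, and it is technical rather than conceptual: one must set up the natural extension of the \emph{partially defined and discontinuous} semigroup action $S^\square$ carefully, so that $\widehat Q$ is genuinely supported on honest bi-infinite orbits, and then verify that the patched measure $\mu_\omega$ is truly Radon and nonzero for a.e.\ $\omega$ and that $\omega\mapsto\mu_\omega$ is Borel and flow-equivariant — the same care that is needed to justify the denominators appearing in the injectivity argument. Once the reconstruction map $\omega\mapsto\mu_\omega$ is in place, the rest is routine bookkeeping.
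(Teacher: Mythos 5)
Your proof is correct and follows essentially the same route as Hochman's \cite[Lemma 3.1]{Hochman2010} (which the paper cites without reproducing the argument) and as the paper's own proof of the CP analogue, Lemma~\ref{lem:restricted-to-extended-CP}: pass to the two-sided natural extension of the semigroup action and reconstruct a global Radon measure by consistently rescaling the backward orbit, the normalization constants $b_n$ absorbing the mass ratios. The intertwining computation $(S_t\mu)^\square=S_t^\square(\mu^\square)$ and the recursion $b_{n+1}=b_n/S_1\omega_{-n-1}(B_1)$ are both correct, and your direct injectivity argument via compactly supported test functionals is a sound alternative to appealing to uniqueness of the natural extension.
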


We can now state the key result of Hochman \cite[Theorem 1.7]{Hochman2010} asserting the additional spatial invariance enjoyed by typical tangent distributions.

\begin{theorem} \label{thm:TDs-are-FDs}
For any $\mu\in\MM$ and $\mu$ almost every $x$, all tangent distributions of $\mu$ at $x$ are fractal distributions.
\end{theorem}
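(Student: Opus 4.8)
The plan is to prove Theorem \ref{thm:TDs-are-FDs} by combining two ingredients: (i) a general "density of scales" argument showing that at $\mu$-a.e.\ $x$, every tangent distribution is scale-invariant, i.e.\ $S^\square_t$-invariant; and (ii) a Besicovitch-type covering / Fubini argument showing that the scenery process at typical points, averaged over time, has the quasi-Palm symmetry inherited from the (essentially translation-invariant) way $\mu$ looks around $\mu$-typical points. In more detail, I would set up the scenery flow $(t,x)\mapsto \mu_{x,t}$ and note that the scenery distributions $\langle\mu\rangle_{x,T}$ are, up to an error $O(1/T)$, invariant under the push-forward by $S^\square_s$ for any fixed $s$, simply because $S^\square_s$ shifts the interval $[0,T]$ of integration to $[s,s+T]$; passing to a weak limit kills the error and gives $S^\square$-invariance of any $P\in\TD(\mu,x)$. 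Lifting via Lemma \ref{lem:extended-restricted-correspondence} to the extended version $P$ on $\PP(\MM^*)$, one gets that $P$ is scale-invariant in the sense of the definition, and in particular supported on $\MM^*$ (which is automatic since the scenery consists of measures with $0$ in their support).

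The substantive part is the quasi-Palm property. I would fix a Borel set $\cA\subset\MM$ with $P(\cA)=1$ and aim to show that for $P$-a.e.\ $\nu$, $T^*_z\nu\in\cA$ for $\nu$-a.e.\ $z$ — and conversely. The key observation is that translating the center of magnification by a small amount and then zooming in is, in the limit, the same as first zooming in and then translating: precisely, for $x\in\spt\mu$ and $y$ near $x$, the measures $\mu_{y,t}$ and $T^*_{w}\mu_{x,t}$ (with $w=e^t(y-x)$) agree on $B_1$ as long as $e^t|y-x|$ stays bounded. One then wants to integrate this over $y$ according to $\mu$ itself, and use that for a fixed continuous test functional $F$ on $\PP(\MM)$ and a.e.\ $x$,
\[
\lim_{T\to\infty}\frac1T\int_0^T F(\mu_{x,t})\,\mathrm{d}t
=\lim_{T\to\infty}\frac1T\int_0^T\Big(\int F(T^*_{w}\mu_{x,t})\,\mathrm{d}\rho_t(w)\Big)\mathrm{d}t
\]
for a suitable family of "local" probability measures $\rho_t$ on the unit ball that approximate the normalized restriction of $S_t T_x\mu$. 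This is the heart of Hochman's argument and the step I expect to be the main obstacle: making the interchange of limits rigorous requires a careful Besicovitch/Lebesgue-density covering of $\spt\mu$ at scale $e^{-t}$, control of the "bad" set of $x$ where $\mu$ has atypically large or small mass in small balls, and an application of the ergodic theorem (or a martingale/maximal inequality) to push the time average through. Equivalently, one shows directly that the Palm-type distribution $\iint_U \delta_{T^*_x\nu}\,\mathrm{d}\nu(x)\,\mathrm{d}P(\nu)$ is mutually absolutely continuous with $P$, by testing against indicator functions of full-measure sets and using the above asymptotic identification of $\mu_{y,t}$ with a translate of $\mu_{x,t}$.

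Concretely, the steps in order would be: (1) record the "almost-invariance" of $\langle\mu\rangle_{x,T}$ under $S^\square_s$ and conclude scale-invariance of all tangent distributions, for every $x\in\spt\mu$ with $\TD(\mu,x)\neq\emptyset$ — no measure-theoretic genericity needed here. (2) State and prove the commutation lemma relating $\mu_{y,t}$ to $T^*_{e^t(y-x)}\mu_{x,t}$ on $B_1$, with explicit error in terms of how close $y$ is to $x$ relative to $e^{-t}$. (3) Introduce the "internal" averaging: for $\mu$-a.e.\ $x$, show that $\langle\mu\rangle_{x,T}$ can be compared with a doubly-averaged object $\frac1T\int_0^T \pi_*\big(\text{(scenery at }t)\otimes(\text{local translation by }S_tT_x\mu|_{B_1})\big)\,\mathrm{d}t$, where $\pi(\theta,w)=T^*_w\theta$; this is where a density argument (e.g.\ using that $\mu$-a.e.\ $x$ is a Lebesgue density point in the appropriate sense, together with Besicovitch covering) and the ergodic theorem are invoked to handle the $\mu$-null exceptional set. (4) Pass to weak limits along a subsequence realizing a given $P\in\TD(\mu,x)$, obtaining $P=\iint_U\delta_{T^*_w\nu}\,\mathrm{d}\nu(w)\,\mathrm{d}P(\nu)$ (or at least mutual absolute continuity), which by the remark following the definition is exactly the quasi-Palm property. (5) Combine (1) and (4): at $\mu$-a.e.\ $x$, every $P\in\TD(\mu,x)$ is scale-invariant and quasi-Palm, hence a fractal distribution. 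I would lean on Hochman's \cite{Hochman2010} treatment for the technical covering estimates in step (3), presenting them as the crux rather than re-deriving every inequality.
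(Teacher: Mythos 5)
The paper does not prove this theorem: it is recalled verbatim as a citation of \cite[Theorem 1.7]{Hochman2010} and used as a black box, so there is no in-paper proof to compare against. Your write-up is best read as a high-level reconstruction of Hochman's own argument, and it does capture the correct two-part structure (scale-invariance from time-averaging, quasi-Palm from a commutation-of-translation-and-magnification argument). But it is not a proof. The substantive half --- quasi-Palm --- is exactly your steps (2)--(4), and you yourself describe it as ``the heart of Hochman's argument and the step I expect to be the main obstacle'' and then defer the commutation lemma, the covering/density control of the exceptional set, and the interchange of limits to \cite{Hochman2010}. That turns the outline back into a citation.

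There is also a concrete error in the part you present as elementary. You assert scale-invariance ``for every $x$ with $\TD(\mu,x)\neq\emptyset$ --- no measure-theoretic genericity needed,'' and that being supported on $\MM^*$ is ``automatic since the scenery consists of measures with $0$ in their support.'' Neither is right as stated. First, $\MM^*=\{\nu:0\in\spt\nu\}$ is Borel but not closed in the weak topology, so a weak limit $P$ of $\langle\mu\rangle_{x,T_k}$ need not be supported on $\MM^*$ just because every $\mu_{x,t}$ is; ruling out mass escaping to measures with $0\notin\spt\nu$ is a genuine step. Second, $S_s^\square$ is discontinuous (restriction to $B_1$ and normalization by $\nu(B(0,e^{-s}))$ are discontinuous at measures charging the sphere $\partial B(0,e^{-s})$ or giving it vanishing mass), so one cannot push $S_s^\square$ through the weak limit defining $P$ without first knowing $P$ gives zero mass to the discontinuity set --- which is exactly where the $\mu$-a.e.\ qualifier and density considerations enter even for the ``easy'' half. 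Both gaps are handled in \cite{Hochman2010}, but they are not automatisms, and a sketch that dismisses them is incomplete even where it claims to be complete.
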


Given a fractal distribution $P$, as an invariant measure for dynamical system defined by the flow $S_t^*$ we can consider the \textit{ergodic decomposition} $\{P_\alpha\}$ of $P$ with respect to $S_t^*$:
$$P = \int P_\alpha \, \mathrm{d}P(\alpha).$$
Hochman \cite[Theorem 1.3]{Hochman2010} also proved that the quasi-Palm property is also preserved when passing to the ergodic components.

\begin{theorem} \label{thm:ergodic-components-of-FDs-are-FDs}
Almost all ergodic components of an FD are EFDs.
\end{theorem}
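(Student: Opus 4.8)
The plan is to prove Theorem~\ref{thm:ergodic-components-of-FDs-are-FDs} by reducing the quasi-Palm property to a property that is manifestly preserved under ergodic decomposition, and then invoking it for the components. First I would work with the restricted version $P^\square$, which is an $S^\square$-invariant distribution on the compact metrizable space $\MM^\square$; by the ergodic decomposition theorem there is a Borel map $\alpha \mapsto P^\square_\alpha$ with $P^\square = \int P^\square_\alpha\,\mathrm{d}P^\square(\alpha)$ and each $P^\square_\alpha$ ergodic under $S^\square$. Via Lemma~\ref{lem:extended-restricted-correspondence} this lifts to an ergodic decomposition $P = \int P_\alpha\,\mathrm{d}P(\alpha)$ of the extended version, with each $P_\alpha$ being $S^*$-invariant. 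The content of the theorem is that $P_\alpha$ is quasi-Palm for $P$-almost every $\alpha$; scale-invariance of the components is automatic, so the ergodicity in the definition of EFD is also automatic. The key point to exploit is the remark after the definition of fractal distributions: the quasi-Palm property for an $S^*$-invariant $P$ is equivalent to the family of ``equivalence'' relations $P \sim \iint_U \delta_{T^*_x\mu}\,\mathrm{d}\mu(x)\,\mathrm{d}P(\mu)$ for all bounded open $U \ni 0$, and by a countable-basis argument it suffices to check this for a countable collection of sets $U$.

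The central step is thus to show: if $P$ is quasi-Palm then, for each fixed bounded open $U \ni 0$, the identity $P_\alpha \sim \iint_U \delta_{T^*_x\mu}\,\mathrm{d}\mu(x)\,\mathrm{d}P_\alpha(\mu)$ holds for $P$-almost every $\alpha$. Here the natural approach is to integrate over $\alpha$ and use that the ergodic decomposition commutes with the relevant operations: the map $\mu \mapsto \iint_U \delta_{T^*_x\mu}\,\mathrm{d}\mu(x)$ (suitably normalized, or as an $\sigma$-finite ``Palm-type'' measure) is a fixed Borel operation, so $Q_\alpha := \iint_U \delta_{T^*_x\mu}\,\mathrm{d}\mu(x)\,\mathrm{d}P_\alpha(\mu)$ depends measurably on $\alpha$ and $\int Q_\alpha\,\mathrm{d}P(\alpha) = \iint_U \delta_{T^*_x\mu}\,\mathrm{d}\mu(x)\,\mathrm{d}P(\mu)$. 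Since $P$ is quasi-Palm, the right-hand side is equivalent to $P = \int P_\alpha\,\mathrm{d}P(\alpha)$. One then wants to conclude $Q_\alpha \sim P_\alpha$ almost surely. The direction ``every $P_\alpha$-full set is $Q_\alpha$-full'' (absolute continuity $Q_\alpha \ll P_\alpha$) should follow by disintegration: a set $\mathcal{A}$ with $P_\alpha(\mathcal{A})=1$ for a.e.\ $\alpha$ has $P(\mathcal{A})=1$, hence $Q(\mathcal{A})=1$ where $Q = \int Q_\alpha\,\mathrm{d}P(\alpha)$, and one needs that this forces $Q_\alpha(\mathcal{A})=1$ for a.e.\ $\alpha$ -- which requires knowing the $Q_\alpha$ themselves form (a scalar multiple of) the ergodic decomposition of $Q$, or at least are carried by the ergodic components of $P$ appropriately. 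The cleanest route is: because $P$ is $S^*$-invariant and $Q = P$ up to equivalence, and the ergodic components $P_\alpha$ are mutually singular and exhaust $P$, one shows each $Q_\alpha$ is carried by the same ergodic class as $P_\alpha$ (using that the quasi-Palm identity, applied to the set $\bigcup_\beta\{$ergodic class of $\beta\}$, localizes), and then ergodicity of $P^\square_\alpha$ together with $S^\square$-invariance of $Q^\square_\alpha$ (which must be checked) forces $Q_\alpha \sim c_\alpha P_\alpha$ for some constant, and normalization gives $c_\alpha = 1$ for a.e.\ $\alpha$.

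For the reverse direction, that every $P_\alpha$-null set is $Q_\alpha$-null, I would argue as follows. Quasi-Palm gives $P \sim Q$, so a $P$-null set is $Q$-null; decomposing, a set $\mathcal{N}$ with $P_\alpha(\mathcal{N})=0$ for a.e.\ $\alpha$ has $P(\mathcal{N})=0$, hence $Q(\mathcal{N})=0$, hence $Q_\alpha(\mathcal{N})=0$ for a.e.\ $\alpha$ by nonnegativity under the integral $\int Q_\alpha\,\mathrm{d}P(\alpha)$. The only subtlety is the uniformity over the uncountably many sets $\mathcal{N}$: one circumvents this exactly as Hochman does, by noting that to verify the quasi-Palm property for $P_\alpha$ it is enough to verify the equivalence $P_\alpha \sim Q_\alpha$ for the countably many basic $U$, and for a fixed $U$ the statement $P_\alpha \sim Q_\alpha$ is a statement about two specific Borel measures, so ``for $P$-a.e.\ $\alpha$, $P_\alpha \sim Q_\alpha$'' is a legitimate almost-sure statement obtained by intersecting countably many full-measure sets of $\alpha$'s.

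The main obstacle is the direction $Q_\alpha \ll P_\alpha$ -- i.e.\ ruling out that the Palm-type measure built from an ergodic component $P_\alpha$ secretly sees mass outside the support class of $P_\alpha$. This is where ergodicity of the component is essential and where the argument is not purely formal: one must combine the $S^*$-invariance of the whole picture with the quasi-Palm identity for $P$ to show that the operation $\mu \mapsto \iint_U \delta_{T^*_x\mu}\,\mathrm{d}\mu(x)$ does not mix ergodic classes, and then appeal to the fact (implicit in Hochman's framework and in Lemma~\ref{lem:extended-restricted-correspondence}) that on the compact model $\MM^\square$ the ergodic components are honest mutually singular measures, so an $S^\square$-invariant measure absolutely continuous with respect to $P^\square$ and carried by the ergodic class of $P^\square_\alpha$ must be proportional to $P^\square_\alpha$. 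I expect the write-up to be short once this ``no mixing'' point is isolated and the rest reduced to standard disintegration bookkeeping.
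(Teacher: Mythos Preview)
The paper does not prove this theorem: it is stated and attributed to \cite[Theorem~1.3]{Hochman2010}, with no argument given here. There is nothing in this paper to compare your proposal against.

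On the merits of your outline, there are genuine gaps. First, you have confused the two directions of the equivalence $P_\alpha \sim Q_\alpha$: ``every $P_\alpha$-full set is $Q_\alpha$-full'' and ``every $P_\alpha$-null set is $Q_\alpha$-null'' are the \emph{same} statement (both say $Q_\alpha \ll P_\alpha$), yet you present them in separate paragraphs as opposite directions and then name ``$Q_\alpha \ll P_\alpha$'' a third time as the main obstacle. Second, the argument you do give commits a quantifier slip: from ``if $P_\alpha(\mathcal{N})=0$ for a.e.\ $\alpha$ then $Q_\alpha(\mathcal{N})=0$ for a.e.\ $\alpha$'' you cannot conclude ``for a.e.\ $\alpha$, $Q_\alpha \ll P_\alpha$'', since the latter must hold for \emph{every} $P_\alpha$-null $\mathcal{N}$ of that specific $\alpha$, not only for those $\mathcal{N}$ that happen to be null across all components simultaneously. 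Reducing to countably many $U$ does not cure this---the uncountability lives in the test sets $\mathcal{N}$, not in $U$. Third, your plan to show $Q_\alpha^\square$ is $S^\square$-invariant and then invoke ergodicity of $P_\alpha^\square$ to force proportionality does not work as stated: for a fixed bounded $U$, the measure $\iint_U \delta_{T_x^*\mu}\,\mathrm{d}\mu(x)\,\mathrm{d}P_\alpha(\mu)$ is in general \emph{not} $S^*$-invariant, because the underlying operation does not commute with scaling (one has $S_t T_x = T_{e^{t} x} S_t$ before normalization). For the same reason, the ``no mixing'' assertion that $T_x^*\mu$ lies in the same $S^*$-ergodic class as $\mu$ is far from obvious---the translated and untranslated sceneries diverge rather than converge as $t\to\infty$---and this is precisely the substantive point that your sketch leaves open.
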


To conclude our discussion of fractal distributions, we note the following consequence of the Besicovitch density point theorem. A particular case of this is \cite[Proposition 3.7]{Hochman2010}; the proof is the same, so is omitted.

\begin{proposition} \label{prop:TD-restriction}
 If $\mu\in\cM$ and $\mu(A)>0$, then for $\mu$ almost all $x\in A$ we have that
 $$\TD(\mu,x)=\TD(\mu_A,x).$$
\end{proposition}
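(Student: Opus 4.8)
The plan is to reduce the statement to the assertion that, for $\mu$ almost every $x \in A$, the two sceneries $\mu_{x,t}$ and $(\mu_A)_{x,t}$ become arbitrarily close as $t\to\infty$, and then to average. Fix a metric $\rho$ compatible with the weak topology on the compact space $\cM^\square=\PP(B_1)$; I claim that for $\mu$ almost every $x\in A$ one has $\rho(\mu_{x,t},(\mu_A)_{x,t})\to 0$ as $t\to\infty$. Granting this, take any bounded continuous $F\colon \PP(B_1)\to\R$. Since $F$ is uniformly continuous, $|F(\mu_{x,t})-F((\mu_A)_{x,t})|\to 0$, hence its Cesàro average over $[0,T]$ tends to $0$; that is, $\int F\,\mathrm{d}\langle\mu\rangle_{x,T}-\int F\,\mathrm{d}\langle\mu_A\rangle_{x,T}\to 0$. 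As $\PP(\PP(B_1))$ is compact and metrizable, it follows that $\langle\mu\rangle_{x,T}$ and $\langle\mu_A\rangle_{x,T}$ have exactly the same accumulation points as $T\to\infty$, i.e.\ $\TD(\mu,x)=\TD(\mu_A,x)$.

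To prove the pointwise claim I would invoke the Besicovitch density point theorem: for a Radon measure $\mu$ on $\R^d$ (with the max-norm metric, for which the Besicovitch covering theorem, hence the density point theorem with cubes, holds) and any Borel set $A$, $\mu$ almost every $x\in A$ is a $\mu$-density point of $A$, i.e.\ $\mu(A\cap B(x,r))/\mu(B(x,r))\to 1$ as $r\to 0$. Fix such an $x$; note it lies in $\spt\mu\cap\spt\mu_A$, so both sceneries are defined. Writing $\nu_t:=S_t(T_x\mu)|_{B_1}$ and $\nu_t^A:=S_t(T_x(\mu|_A))|_{B_1}$, we have $\mu_{x,t}=\nu_t/\nu_t(B_1)$ and $(\mu_A)_{x,t}=\nu_t^A/\nu_t^A(B_1)$, with $\nu_t^A\le\nu_t$, $\nu_t(B_1)=\mu(B(x,e^{-t}))$ and $\nu_t^A(B_1)=\mu(A\cap B(x,e^{-t}))$. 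A one-line estimate then yields, for every Borel $E\subseteq B_1$, the bound $|\mu_{x,t}(E)-(\mu_A)_{x,t}(E)|\le 2\bigl(\nu_t(B_1)/\nu_t^A(B_1)-1\bigr)$, whose right-hand side tends to $0$ by the density point property. Thus $\mu_{x,t}$ and $(\mu_A)_{x,t}$ converge to each other in total variation, and a fortiori in $\rho$.

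There is a minor technical point, namely that $\mu_A$ is defined only when $\mu(A)<\infty$; this is harmless, since for a density point $x$ of $A$ and any fixed $r_0>0$ the measures $\mu_A$ and $\mu_{A\cap B(x,r_0)}$ have identical normalized restrictions to $B(x,e^{-t})$ once $e^{-t}\le r_0$, so one may assume $0<\mu(A)<\infty$ from the outset. I do not expect any serious obstacle here: the only feature demanding care is that the normalization operators $S_t^\square$ are discontinuous, but we never take a limit of a single scenery — we compare the two sceneries in parallel and average only afterwards, which sidesteps the discontinuity entirely. The substantive input is precisely the Besicovitch density point theorem.
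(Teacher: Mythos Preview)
Your proof is correct and follows exactly the approach the paper indicates: the paper does not spell out a proof but simply states that the proposition is a consequence of the Besicovitch density point theorem (referring to \cite[Proposition 3.7]{Hochman2010} for a particular case). Your argument supplies precisely those omitted details, showing that at a density point the two sceneries are asymptotically close in total variation and hence have identical scenery-distribution limits.
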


\subsection{CP distributions}
\label{subsec:CP}

CP processes, introduced by Furstenberg in \cite{Furstenberg2008} (though in embryonic form go back to \cite{Furstenberg1970}), are analogous to FDs, except that the zooming-in is done through $b$-adic cubes rather than cubes centered at the reference point.

\begin{notation}[Dyadic systems]
For simplicity we restrict ourselves to dyadic CP processes. Let $\DD$ be the partition of $B_1$ into $2^d$ cubes of the form $I_1\times\cdots\times I_d$, where $I_i\in\{ [-1,0),[0,1] \}$. Given $x\in B_1$, let $D(x)$ be the only element of $\DD$ containing it. More generally, for $k\ge 1$, we let $\DD_k$ be the collection of cubes of the form $I_1\times\cdots\times I_d$, where
\[
I_i\in \big\{ [-1,-1+2^{1-k}),\ldots, [1-2\cdot 2^{1-k}, 1 - 2^{1-k}), [1-2^{1-k},1] \big\}.
\]
We refer to elements of $\DD_k$ as dyadic cubes of step $k$ (or size $2^{1-k}$). Further, if $D$ is any cube, write $T_D$ for the orientation-preserving homothety mapping from $\overline{D}$ onto $B_1$.
\end{notation}

\begin{definition}[CP magnification operator] With this notation, we define the (dyadic) \textit{CP magnification operator} $M$ on $\PP(B_1)\times B_1$ by
\[
M(\mu,x)= M^\square(\mu,x) := (T_{D(x)}^\square \mu, T_{D(x)} (x)).
\]
This is defined whenever $\mu(D(x))>0$.
\end{definition}

Note that, unlike FDs, here it is important to keep track of the orbit of the point that is being zoomed upon. Note also that $M$ acts on $\Xi:=\PP(B_1)\times B_1$.

\begin{remark}
In Hochman's work, CP processes are defined via dyadic partitions of the half-open cube $[-1,1)^d$. This creates some technical issues with measures which give positive mass to the set $\{ (x_1,\ldots,x_n):x_i=1 \textrm{ for some } i\}$. Here we have followed Furstenberg's original definition from \cite{Furstenberg2008}. Because ultimately we will deal only with measures which give zero mass to the boundaries of cubes, this is just a matter of convenience.
\end{remark}

The analogue of the quasi-Palm in this context is the \textit{adaptedness} of distributions.

\begin{definition}[CP distributions]
A distribution $Q$ on $\Xi$ is \emph{adapted}, if there is a disintegration
\begin{equation} \label{eq:adapted}
\int f(\nu,x) \,\mathrm{d}Q(\nu,x) = \iint f(\nu,x)\,\textrm{d}\nu(x) \,\textrm{d}\overline{Q}(\nu)\quad\textrm{for all }f\in C(\Xi),
\end{equation}
where $\overline{Q}$ is the projection of $Q$ onto the measure component. Given a distribution $Q$ on $\Xi$, its \emph{intensity measure} is given by
\[
[Q](A) := \int \mu(A)\,\mathrm{d}\overline{Q}(\mu), \quad A \subset B_1.
\]
A distribution on $\Xi$ is a \emph{CP distribution} (CPD) if it is $M$-invariant (that is, $MQ=Q$), adapted, and its intensity measure is normalized Lebesgue measure on $B_1$, which we denote by $\mathcal{L}$. The family of all CP distributions is denoted by $\CPD$, and the ergodic ones by $\ECPD$.
\end{definition}

Note that adaptedness can be interpreted in the following way: in order to sample a pair $(\mu,x)$ from the distribution $Q$, we have to first sample a measure $\mu$ according to $\overline{Q}$, and then sample a point $x$ using the chosen distribution $\mu$. This interpretation highlights the connection with the quasi-Palm property.

\begin{remark}The condition that $[Q]$ is Lebesgue is not part of the definition of CP process given in \cite{Furstenberg2008,Hochman2010}, and indeed there are important examples of adapted, $M$-invariant distributions with non-Lebesgue intensity. However, all distributions we will consider do have this property, which will be required repeatedly in the proofs. As a first useful consequence, note that if $[Q]=\mathcal{L}$, then for any fixed cube $B(x,r)$, $Q$ almost all measures give zero mass to the boundary of $B(x,r)$. In particular, $Q$ almost every measure gives zero mass to the boundary of the elements of $\DD$. This will help us in dealing with the discontinuities inherent to the dyadic partition.

The usefulness of this condition was already implicit in Hochman's work \cite{Hochman2010}, where a random translation is often applied to ensure that the resulting CP processes have Lebesgue intensity. We also remark that, for us, CP processes are a tool towards the study of the scenery flow and fractal distributions, so we adopted the definition that happens to be most useful with this goal in mind.
\end{remark}

We can define concepts similar to the scenery and tangent distributions (Definition \ref{def:scenery-and-tangent-distributions}) for CP processes:

\begin{definition}[CP scenery and micromeasure distributions]
Given a measure $\mu \in \cM^\square$, $x \in B_1$, and $N \in \N$, we define the \textit{CP scenery distribution} of $\mu$ at $x$ along the scales $1,\dots,N$ by
$$\langle \mu,x \rangle_N := \frac1N \sum_{k = 0}^{N-1} \delta_{M^k(\mu,x)}.$$
Any accumulation point of $\langle \mu,x \rangle_N$ in $\cP(\Xi)$, as $N \to \infty$, is called a \textit{micromeasure distribution}, and the set of them is denoted by $\MD(\mu,x)$. We say that a measure $\mu \in \cM$ \textit{CP generates} $Q$ if
$$\langle \mu,x \rangle_N \to Q, \textrm{ as }N \to \infty,$$
that is, $\MD(\mu,x) = \{Q\}$, at $\mu$ almost every $x$.
\end{definition}

\begin{remark}
By compactness of $\PP(\Xi)$, the set $\MD(\mu,x)$ is always nonempty and compact.
\end{remark}

Again similarly as in Proposition \ref{prop:TD-restriction}, a consequence of the Besicovitch density point theorem (in its version for dyadic cubes, which is simpler and can be seen from a martingale argument) yields:

\begin{proposition} \label{prop:MD-restriction}
 If $\mu\in\cM^\square$ and $\mu(A)>0$, then for $\mu$ almost all $x\in A$ we have that
 $$\MD(\mu,x)=\MD(\mu_A,x).$$
\end{proposition}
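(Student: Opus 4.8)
The plan is to mirror the proof of Proposition~\ref{prop:TD-restriction}, replacing the Besicovitch density point theorem by its dyadic (martingale) counterpart. First I would fix $\mu\in\cM^\square$ and a Borel set $A$ with $\mu(A)>0$, and consider the dyadic density function $x\mapsto \mu_{D_k(x)}(A)$, where $D_k(x)$ is the unique dyadic cube of step $k$ containing $x$. Since the $\sigma$-algebras generated by $\DD_k$ increase to the Borel $\sigma$-algebra, the martingale convergence theorem gives $\mu_{D_k(x)}(A)\to\1_A(x)$ for $\mu$ almost every $x$; in particular, at $\mu$ almost every $x\in A$ we have $\mu_{D_k(x)}(A)\to 1$.

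Next I would compare the CP sceneries of $\mu$ and of $\mu_A$ at such a point $x$. Writing $\mu^{(k)}:=T^\square_{D_k(x)}\mu$ for the measure component of $M^k(\mu,x)$, and similarly $(\mu_A)^{(k)}=T^\square_{D_k(x)}\mu_A$, one checks directly from the definitions that $(\mu_A)^{(k)}=\big(\mu^{(k)}\big)_{E_k}$ where $E_k=T_{D_k(x)}(A\cap D_k(x))$ and $\mu^{(k)}(E_k)=\mu_{D_k(x)}(A)\to 1$. Hence the total variation distance between $(\mu_A)^{(k)}$ and $\mu^{(k)}$ tends to $0$, so the two measures have the same weak limits, and the point components $T_{D_k(x)}(x)$ are literally identical in both sceneries. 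Consequently, for any continuous $f$ on $\Xi$, the Cesàro averages $\frac1N\sum_{k=0}^{N-1} f(M^k(\mu,x))$ and $\frac1N\sum_{k=0}^{N-1} f(M^k(\mu_A,x))$ differ by an amount that goes to $0$ (the terms differ only for small $k$, or by a quantity $\to 0$), so $\langle\mu,x\rangle_N$ and $\langle\mu_A,x\rangle_N$ have exactly the same accumulation points in $\PP(\Xi)$, which is precisely the assertion $\MD(\mu,x)=\MD(\mu_A,x)$.

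The one point requiring a little care is that $M^k(\mu_A,x)$ is only defined when $\mu_A(D_k(x))>0$, i.e.\ when $\mu(A\cap D_k(x))>0$; but for $\mu$ almost every $x\in A$ the martingale limit being $1$ forces $\mu_{D_k(x)}(A)>0$ for all large $k$, so the CP scenery of $\mu_A$ at $x$ is eventually well defined, and discarding finitely many initial scales does not affect the Cesàro limit. I expect this bookkeeping — ensuring the orbit is defined for all large $k$ and that the finitely many undefined or discrepant initial terms are harmless — to be the only mild obstacle; everything else is a routine transcription of the Euclidean argument of \cite[Proposition~3.7]{Hochman2010} to the dyadic setting, and as the authors note the proof is essentially the same, so it is omitted in the paper.
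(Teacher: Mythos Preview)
Your proposal is correct and follows precisely the approach the paper indicates (but omits): the dyadic/martingale version of the density point theorem gives $\mu_{D_k(x)}(A)\to 1$ at $\mu$ almost every $x\in A$, from which the asymptotic coincidence of the CP sceneries of $\mu$ and $\mu_A$ is immediate. One small simplification: once $\mu(A\cap D_k(x))>0$ for some large $k$, monotonicity of the nested cubes $D_j(x)\supset D_k(x)$ for $j\le k$ forces $\mu(A\cap D_j(x))>0$ for every $j$, so the CP scenery of $\mu_A$ at such $x$ is in fact defined for all scales, and your bookkeeping about ``finitely many undefined initial terms'' is not actually needed.
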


Just like tangent distributions at typical points are fractal distributions (Theorem \ref{thm:TDs-are-FDs}), micromeasure distributions at typical points are CP distributions, but only after we randomly translate the measure. However, the only role of the random translation is to ensure that all micromeasure distributions have Lebesgue intensity.

\begin{theorem} \label{thm:micro-distris-are-CPDs}
Let $\mu\in\MM$. The following holds for $\mu$ almost all $x$.
\begin{enumerate}
\item All distributions in $\MD(\mu,x)$ are adapted.
\item If $Q\in\MD(\mu,x)$ has Lebesgue intensity, then $Q$ is a CPD.
\item For Lebesgue almost all $\omega\in B(0,1/2)$, all distributions in $\MD(\mu+\omega,x+\omega)$ are CPDs
\end{enumerate}
\end{theorem}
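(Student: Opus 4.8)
The plan is to establish the three parts in sequence, since each builds on the previous one. The starting point is the ergodic theorem applied to the CP magnification operator $M$ acting on $\Xi = \PP(B_1)\times B_1$, together with the fact that, along a typical orbit $\{M^k(\mu,x)\}$, the empirical measures $\langle\mu,x\rangle_N$ accumulate on $M$-invariant distributions.

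\textbf{Part (1): adaptedness.} First I would show that every $Q\in\MD(\mu,x)$ is adapted for $\mu$-a.e.\ $x$. The key observation is that adaptedness is a property that holds trivially for the empirical distributions $\langle\mu,x\rangle_N$ "in the limit": if $(\nu_k,y_k) = M^k(\mu,x)$, then by construction $y_k$ is the image of $x$ under the composition of homotheties $T_{D_k}\circ\cdots\circ T_{D_1}$, where $D_j$ is the step-$j$ dyadic cube containing $x$; meanwhile $\nu_k = T_{D_k}^\square\cdots$. Crucially, the point $y_k$ is distributed according to $\nu_k$ up to an error: more precisely, one checks that for any $f\in C(\Xi)$, the quantity $\int f\,d\langle\mu,x\rangle_N$ differs from $\int\!\!\int f(\nu,z)\,d\nu(z)\,d\overline{\langle\mu,x\rangle_N}(\nu)$ by an amount that goes to zero. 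This is because, conditioned on the orbit up to scale $k$, the location of $x$ within the step-$k$ cube — hence the value of $y_k = T_{D_k}(y_{k-1})$ — is governed by the conditional measure, which after rescaling is exactly $\nu_k$ restricted appropriately; a martingale/telescoping argument (the dyadic Besicovitch, as the excerpt hints) makes this precise. Passing to a weak limit along the subsequence defining $Q$, and using that the pushforward $\langle\mu,x\rangle_N\to Q$ together with continuity of $f$ and of $\nu\mapsto\int f(\nu,z)\,d\nu(z)$ for $f$ continuous, gives \eqref{eq:adapted} for $Q$.

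\textbf{Parts (2) and (3): from adapted $+$ invariant to CPD.} Part (2) is essentially immediate once Part (1) is in hand: an element of $\MD(\mu,x)$ is $M$-invariant (being an accumulation point of Birkhoff averages over the $M$-action — one needs the standard fact that $M$ is continuous off the $Q$-null set of boundary-charging measures, which holds precisely when $[Q]=\mathcal{L}$), it is adapted by Part (1), and by the hypothesis of Part (2) it has Lebesgue intensity; these three conditions are exactly the definition of a CPD. For Part (3), the idea is that the only thing possibly failing in Part (2) is the Lebesgue-intensity hypothesis, and a random translation fixes this. Concretely, I would apply Fubini: the map $(\omega,x)\mapsto$ (the orbit of $x+\omega$ under $M$ relative to $\mu+\omega$) is measurable, and for fixed $x$ and Lebesgue-a.e.\ $\omega\in B(0,1/2)$ the translated measure $\mu+\omega$ does not charge the dyadic grid boundaries at the relevant scales along the orbit. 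The decisive point is that $x+\omega$, with $\omega$ Lebesgue-random, has a location within each dyadic cube that is itself Lebesgue-distributed, and this forces the intensity of any micromeasure distribution of $(\mu+\omega, x+\omega)$ to be Lebesgue — because the intensity records the long-run statistics of where the orbit point sits, and these have been randomized to be uniform. Interchanging the "$\mu$-a.e.\ $x$" and "Lebesgue-a.e.\ $\omega$" quantifiers via Fubini then yields the statement as written.

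\textbf{Main obstacle.} I expect the technical heart to be the error estimate in Part (1): showing that the "adaptedness defect" $\bigl|\int f\,d\langle\mu,x\rangle_N - \int\!\!\int f(\nu,z)\,d\nu(z)\,d\overline{\langle\mu,x\rangle_N}(\nu)\bigr|$ tends to $0$ for $\mu$-a.e.\ $x$ and all $f\in C(\Xi)$. This requires care because $M$ is discontinuous and the conditional-measure identity is only exact after taking one more step of magnification; the clean way is to recognize the defect as a telescoping sum of martingale increments for the filtration generated by the dyadic cubes and invoke a martingale convergence / ergodic-averaging argument, exactly the "dyadic Besicovitch" referenced before Proposition \ref{prop:MD-restriction}. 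A secondary nuisance, recurring in all three parts, is justifying that accumulation points of $\langle\mu,x\rangle_N$ are genuinely $M$-invariant despite the discontinuity of $M$; this is handled by the observation (already emphasized in the excerpt's remark) that $[Q]=\mathcal{L}$ — or in Part (1), before we know that, the weaker fact that the orbit avoids boundaries $\mu$-a.s.\ after the random translation in Part (3) — makes $M$ continuous $Q$-almost everywhere, so the usual Krylov–Bogolyubov reasoning applies.
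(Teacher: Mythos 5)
The paper does not prove this theorem itself: parts (1) and (3) are cited from \cite[Propositions 5.4 and 5.5]{Hochman2010} (with a rescaling to drop the restriction $\spt\mu\subset B(0,1/2)$), and part (2) is extracted from Hochman's argument by observing that only zero mass on dyadic-cube boundaries is used there, not the support restriction. Your proposal instead reconstructs Hochman's argument from scratch. The outline is faithful to that argument, but there is a concrete gap exactly where you anticipate trouble.

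In part (1) you propose that the adaptedness defect
\[
\frac{1}{N}\sum_{k=0}^{N-1}\Bigl( f(\nu_k,y_k)-\int f(\nu_k,y)\,\mathrm{d}\nu_k(y)\Bigr)
\]
is a telescoping sum of martingale increments for the filtration $(\mathcal{F}_k)$ generated by dyadic cubes. The conditional-expectation identity $\mathbb{E}[X_k\mid\mathcal{F}_k]=0$ does hold for $X_k:=f(\nu_k,y_k)-\int f(\nu_k,y)\,\mathrm{d}\nu_k(y)$, but $X_k$ is not $\mathcal{F}_{k+1}$-measurable: $y_k=T_{D^k(x)}x$ depends on the entire tail of the dyadic expansion of $x$, not merely on its first $k+1$ digits. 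So the $X_k$ are not martingale differences and there is nothing to telescope. The repair (this is essentially what Hochman does) is to approximate a continuous $f$ uniformly by functions of finite resolution $m$ -- a version of the class $\mathcal{F}_m$ of Lemma \ref{lem:easy-convergence}, enlarged to allow dependence on the first $m$ dyadic digits of the point coordinate as well. For such $f$, each $X_k$ is $\mathcal{F}_{k+m}$-measurable while still having vanishing $\mathcal{F}_k$-conditional expectation; splitting the indices into residue classes modulo $m$ then yields $m$ genuine bounded martingale-difference sequences whose Ces\`aro averages tend to zero almost surely, and passing to the uniform limit in $f$ gives adaptedness for all continuous $f$. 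Without this finite-resolution approximation, the martingale argument does not get off the ground.

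In part (3) you correctly identify that the only role of the random translation $\omega$ is to force $[Q]=\mathcal{L}$, but the phrase ``these have been randomized to be uniform'' asserts the conclusion rather than deriving it: showing that for a.e.\ $(\omega,x)$ the Ces\`aro averages $\frac{1}{N}\sum_{k<N}T^{\square}_{D^k(x+\omega)}(\mu+\omega)$ converge weakly to Lebesgue is precisely the content of Hochman's Proposition 5.5 and requires its own computation, not just a Fubini interchange. Part (2) as you state it is fine once (1) is in hand: $M$-invariance of weak limits of Birkhoff averages requires only that $M$ is continuous off a $Q$-null set, which follows from adaptedness plus $[Q]=\mathcal{L}$ exactly as you say.
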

\begin{proof}
The first claim is \cite[Proposition 5.4]{Hochman2010}. The second follows from the proof of \cite[Proposition 5.5(2)]{Hochman2010}: although \cite[Proposition 5.5]{Hochman2010} is stated for random translations of a fixed measure, the second part only uses the fact that the intensity measure of $Q$ gives zero mass to all the boundaries of dyadic cubes. Finally, the last claim is precisely the content of \cite[Proposition 5.5]{Hochman2010}, except that there $\mu$ is assumed to be supported on $B(0,1/2)$, but after rescaling the measure we can extend the result to arbitrary $\mu\in\MM$.
\end{proof}

Just as the ergodic components of FDs are again FDs, ergodic components of CPDs are again CPDs:

\begin{proposition} \label{prop:ergodic-CPD}
Let $Q$ be a CPD.
\begin{enumerate}
\item For $\ol{Q}$ almost all $\mu$ and $\mu$ almost all $x$, we have that $\MD(\mu,x)=\{Q_{(\mu,x)}\}$, where $Q_{(\mu,x)}$ is the ergodic component of $(\mu,x)$
\item Almost all ergodic components of $Q$ are CPDs.
\end{enumerate}
\end{proposition}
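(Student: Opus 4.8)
The plan is to read off both statements from the ergodic decomposition of the measure-preserving system $(\Xi,Q,M)$, combined with Birkhoff's pointwise ergodic theorem and the facts about micromeasure distributions already recorded in Theorem~\ref{thm:micro-distris-are-CPDs}. Since $M$ is defined $Q$-almost everywhere and $MQ=Q$, we may restrict to the $M$-invariant full-measure set on which all iterates of $M$ are defined (one checks, using $[Q]=\mathcal{L}$ and the disintegration of $Q$, that $\mu(D(x))>0$ for $\mu$ almost every $x$, so the orbit $(M^k(\mu,x))_{k\ge0}$ is well-defined for $Q$ almost every $(\mu,x)$) and treat $(\Xi,Q,M)$ as a genuine m.p.s.

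For part (1), write $(\mu,x)\mapsto Q_{(\mu,x)}$ for the ergodic decomposition map, so that for $Q$ almost every $(\mu,x)$ the component $Q_{(\mu,x)}$ is $M$-invariant and ergodic and $Q=\int Q_{(\mu,x)}\,\mathrm{d}Q(\mu,x)$. As $\Xi=\PP(B_1)\times B_1$ is compact metrizable, $C(\Xi)$ is separable; applying the ergodic theorem inside the ergodic decomposition to each element of a countable dense subset of $C(\Xi)$ gives that, for $Q$ almost every $(\mu,x)$, the averages $\langle\mu,x\rangle_N=\frac1N\sum_{k=0}^{N-1}\delta_{M^k(\mu,x)}$ converge weakly to $Q_{(\mu,x)}$, i.e.\ $\MD(\mu,x)=\{Q_{(\mu,x)}\}$. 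Finally, since $Q$ is adapted it disintegrates as $Q=\int(\delta_\nu\otimes\nu)\,\mathrm{d}\ol{Q}(\nu)$, so a Borel set has full $Q$-measure exactly when, for $\ol{Q}$ almost every $\mu$, its $\mu$-fibre has full $\mu$-measure; applying this to $\{(\mu,x):\MD(\mu,x)=\{Q_{(\mu,x)}\}\}$ yields the stated form.

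For part (2), a.e.\ ergodic component $Q_{(\mu,x)}$ is by construction $M$-invariant and ergodic, so it remains to check adaptedness and that the intensity measure is $\mathcal{L}$. Adaptedness follows at once from part (1): for $\ol{Q}$ almost every $\mu$ and $\mu$ almost every $x$ we have $\MD(\mu,x)=\{Q_{(\mu,x)}\}$, and for this $\mu\in\MM$, Theorem~\ref{thm:micro-distris-are-CPDs}(1) gives that every distribution in $\MD(\mu,x)$ is adapted for $\mu$ almost every $x$; hence $Q_{(\mu,x)}$ is adapted for $\ol{Q}$ almost every $\mu$ and $\mu$ almost every $x$, which by the disintegration of $Q$ means for $Q$ almost every $(\mu,x)$, i.e.\ for almost every ergodic component. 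For the intensity, note that the second-coordinate projection $\pi_2\colon\Xi\to B_1$ intertwines $M$ with the map $g(x):=T_{D(x)}(x)$, that $\pi_2 Q=[Q]=\mathcal{L}$ (again using adaptedness of $Q$), and that $g$ preserves $\mathcal{L}$; under the affine identification of $B_1$ with $[0,1]^d$, $g$ becomes the coordinatewise doubling map, so $(B_1,\mathcal{L},g)$ is ergodic. Applying Birkhoff's theorem within the ergodic decomposition of $(\Xi,Q,M)$ to the functions $f\circ\pi_2$, with $f$ ranging over a countable dense subset of $C(B_1)$: for $Q$ almost every $(\mu,x)$ the averages $\frac1N\sum_{k=0}^{N-1}f(g^k x)=\frac1N\sum_{k=0}^{N-1}(f\circ\pi_2)(M^k(\mu,x))$ converge to $\int f\,\mathrm{d}\pi_2 Q_{(\mu,x)}$; but since the $x$-marginal of $Q$ is $\pi_2 Q=\mathcal{L}$ and $\mathcal{L}$ is $g$-ergodic, these same averages converge to $\int f\,\mathrm{d}\mathcal{L}$ for $Q$ almost every $(\mu,x)$. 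Hence $\pi_2 Q_{(\mu,x)}=\mathcal{L}$, and as $Q_{(\mu,x)}$ is adapted, $[Q_{(\mu,x)}]=\pi_2 Q_{(\mu,x)}=\mathcal{L}$. Thus almost every ergodic component of $Q$ is $M$-invariant, ergodic, adapted, and of Lebesgue intensity, i.e.\ an ergodic CP distribution.

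The routine parts are the ergodic-decomposition and Birkhoff bookkeeping together with the disintegration manipulations; the one genuinely new input is the observation in part (2) that the point-coordinate of a CP distribution is governed by the doubling map, whose triviality of ergodic decomposition pins the intensity of every ergodic component to $\mathcal{L}$. That is the step I would expect to need the most care, in particular verifying ergodicity of $g$ under the chosen half-open/closed conventions and that the discontinuities of $M$ and $g$ are $Q$- and $\mathcal{L}$-null, so that $\pi_2$ is legitimately a factor map.
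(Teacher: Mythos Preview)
Your proof is correct. Part (1) and the adaptedness half of part (2) match the paper's argument essentially verbatim. For the Lebesgue-intensity half of part (2), however, you take a genuinely different route. The paper argues via measure-theoretic entropy: it observes that each $[Q_\alpha]$ is invariant under the map $\Phi\colon B_1\to B_1$ that is conjugate to the full shift on $2^d$ symbols, hence has entropy at most $d\log 2$ with equality iff $[Q_\alpha]=\mathcal{L}$; since $[Q]=\mathcal{L}$ has maximal entropy and entropy is affine in the invariant measure, almost every component must also have maximal entropy, forcing $[Q_\alpha]=\mathcal{L}$. You instead exploit the factor structure directly: $\pi_2$ intertwines $M$ with the coordinatewise doubling map $g$, the system $(B_1,\mathcal{L},g)$ is ergodic (being isomorphic to the $2^d$-Bernoulli shift), and hence the Birkhoff averages of $f\circ\pi_2$ along $M$-orbits converge $Q$-a.e.\ both to $\int f\,\mathrm{d}\pi_2 Q_{(\mu,x)}$ (by the ergodic decomposition of $Q$) and to $\int f\,\mathrm{d}\mathcal{L}$ (by ergodicity downstairs), so $\pi_2 Q_{(\mu,x)}=\mathcal{L}$. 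Your argument is more elementary in that it avoids entropy theory altogether; the paper's argument avoids having to check ergodicity of $g$ but needs the affinity of entropy and the uniqueness of the measure of maximal entropy on the full shift. Both are short and clean, and your observation that the triviality of the ergodic decomposition downstairs pins the intensity of every component is a nice way to see why the conclusion holds.
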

\begin{proof}
Let $Q=\int Q_\alpha \,\mathrm{d}Q(\alpha)$ be the ergodic decomposition of $Q$. By the ergodic theorem, for a fixed $f\in C(\Xi)$ and $Q_\alpha$ almost all $(\mu,x)$,
\[
\lim_{n\to\infty} \int f\, \mathrm{d}\langle\mu,x\rangle_n = \int f\, \mathrm{d}Q_\alpha.
\]
Hence the same holds simultaneously for all $f$ in a uniformly dense countable subset of $C(\Xi)$, and therefore for all $f\in C(\Xi)$. This yields the first claim.

For the second claim, note that from the first part of Theorem \ref{thm:micro-distris-are-CPDs} and adaptedness of $Q$, it follows that for $Q$ almost all $(\mu,x)$, all the elements of $\MD(\mu,x)$ are adapted distributions. Thus, by the first part, $Q_\alpha$ is adapted for $Q$ almost all $\alpha$.

It remains to show that $[Q_\alpha]=\mathcal{L}$ for $Q$ almost all $\alpha$. For this, we will use some well-known facts on measure-theoretical entropy; \cite[Chapters 4 and 8]{Walters1982} contains all the definitions and facts we need.

Let $\Phi\colon B_1\to B_1$ equal to $T_D^{-1}$ on $D$ for each dyadic cube $D\in\DD$. Let $\Delta$ be the union of the boundaries of dyadic cubes of first level. Note that, since $Q$ is adapted, $[Q_\alpha](\Delta)=0$ for $Q$ almost all $\alpha$. Also,
\[
  \iint f(\Phi(x)) \,\textrm{d}\mu(x) \,\textrm{d}\overline{Q}_\alpha(\mu) = \iint f(x) \,\textrm{d}\mu(x) \,\textrm{d}\overline{Q}_\alpha(\mu)
\]
Thus each $[Q_\alpha]$ is $\Phi$-invariant. The map $\Phi$ on $B_1\setminus\Delta$ is naturally conjugated to the full shift on $2^d$ symbols. It follows that the system $(B_1, [Q_\alpha],\Phi)$ is measure-theoretically isomorphic to a measure-preserving system on the full shift on $2^d$ symbols, and the invariant measure on the latter is the measure of maximal entropy if and only if $[Q_\alpha]=\mathcal{L}$. In particular, denoting measure-theoretical entropy by $h_\nu(\Phi)$, we have $h_{[Q_\alpha]}(\Phi) \le d\log 2$, with equality if and only if $[Q_\alpha]=\mathcal{L}$. On the other hand, by the affinity of entropy,
\[
h_{[Q]}(\Phi) = \int h_{[Q_\alpha]}(\Phi)\, \mathrm{d}Q(\alpha).
\]
We conclude that $[Q_\alpha]=\mathcal{L}$ for $Q$ almost all $\alpha$, as claimed.
\end{proof}

\begin{lemma}\label{lem:CPconvex}
$\CPD$ is a convex subset of $\PP(\Xi)$, and the set of extremal points is exactly $\ECPD$.
\end{lemma}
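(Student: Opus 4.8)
The plan is to isolate one structural fact — that the three conditions defining $\CPD$ ($M$-invariance, adaptedness, and Lebesgue intensity) are all stable under forming mixtures of distributions — and to deduce both convexity and the description of the extreme points from it, invoking Proposition \ref{prop:ergodic-CPD} only at the single place where something genuinely non-trivial is needed.

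First, I would record the mixture fact: if $\rho$ is a Borel probability measure on $\PP(\Xi)$ with $\rho(\CPD)=1$ and $Q=\int R\,\mathrm{d}\rho(R)$, then $Q\in\CPD$. Indeed $MQ=\int MR\,\mathrm{d}\rho(R)=\int R\,\mathrm{d}\rho(R)=Q$ since push-forward commutes with integration; the identity \eqref{eq:adapted} for $Q$ follows by integrating the corresponding identities for $R$ against $\rho$, using that $\overline Q=\int\overline R\,\mathrm{d}\rho(R)$ and that $\nu\mapsto\int f(\nu,x)\,\mathrm{d}\nu(x)$ is bounded and continuous; and $[Q]=\int[R]\,\mathrm{d}\rho(R)=\int\mathcal L\,\mathrm{d}\rho(R)=\mathcal L$. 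Taking $\rho$ supported on two points shows in particular that $\CPD$ is convex.

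Next I would treat the characterization of extreme points. For $\ECPD\subseteq\operatorname{ext}(\CPD)$: if $Q\in\ECPD$ and $Q=tQ_0+(1-t)Q_1$ with $Q_0,Q_1\in\CPD$ and $0<t<1$, then each $Q_i$ is $M$-invariant and absolutely continuous with respect to $Q$ (since $\min(t,1-t)\,Q_i\le Q$), so the standard fact that an invariant measure absolutely continuous to an ergodic one must equal it (see \cite{Walters1982,EinsiedlerWard2011}) forces $Q_0=Q_1=Q$. For the reverse inclusion, suppose $Q\in\CPD$ is not ergodic; by definition there is a Borel set $A\subseteq\Xi$ with $Q(M^{-1}A\,\triangle\,A)=0$ and $0<Q(A)<1$. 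Put $Q_A=Q|_A/Q(A)$ and $Q_{A^c}=Q|_{A^c}/Q(A^c)$; invariance of $A$ together with $M$-invariance of $Q$ shows these are $M$-invariant, they are mutually singular hence distinct, and $Q=Q(A)\,Q_A+Q(A^c)\,Q_{A^c}$ is a proper convex combination. It then remains only to verify $Q_A,Q_{A^c}\in\CPD$: writing the ergodic decomposition $Q=\int Q_\omega\,\mathrm{d}Q(\omega)$, invariance of $A$ gives $Q_\omega(A)=\1_A(\omega)$ for $Q$-almost every $\omega$, hence $Q_A=\frac1{Q(A)}\int_A Q_\omega\,\mathrm{d}Q(\omega)$ and similarly for $Q_{A^c}$, so each is a mixture of ergodic components of $Q$, which by Proposition \ref{prop:ergodic-CPD}(2) belong to $\CPD$ for $Q$-almost every $\omega$. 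By the mixture fact, $Q_A,Q_{A^c}\in\CPD$, so $Q$ is not extreme. Combining the two inclusions gives $\operatorname{ext}(\CPD)=\ECPD$.

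The one delicate point — and the reason the lemma is placed after Proposition \ref{prop:ergodic-CPD} — is precisely that the restriction of a CPD to an invariant set is not obviously a CPD: Lebesgue intensity, in particular, is not inherited by an elementary argument, since from $[Q]=Q(A)[Q_A]+Q(A^c)[Q_{A^c}]=\mathcal L$ one cannot conclude $[Q_A]=[Q_{A^c}]=\mathcal L$ (a convex combination of two distinct probability measures can be $\mathcal L$ with neither summand equal to $\mathcal L$; the entropy computation used to prove Proposition \ref{prop:ergodic-CPD} is exactly what excludes this for the ergodic pieces). Routing through the ergodic decomposition, whose components are already known there to be genuine CPDs, and using closure of $\CPD$ under mixtures, is the device that resolves it; adaptedness of $Q_A$ is handled the same way and is not an elementary consequence of adaptedness of $Q$ either.
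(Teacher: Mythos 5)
Your proof is correct and follows essentially the same route as the paper's: convexity (indeed, closure under mixtures) is checked directly from the defining conditions, ergodicity gives extremality via the standard absolute-continuity argument in the larger simplex of $M^\square$-invariant distributions, and the converse inclusion is deduced from Proposition~\ref{prop:ergodic-CPD} (ergodic components of a CPD are CPDs). The paper's proof is a terse three-line version of exactly this; your write-up is a faithful unpacking, and your closing remark correctly identifies where the entropy argument in Proposition~\ref{prop:ergodic-CPD} is doing the real work (Lebesgue intensity and adaptedness of $Q_A$ are genuinely non-elementary).
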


\begin{proof}
The properties of adaptedness and having Lebesgue intensity are checked from definitions to be convex, so $\CPD$ is indeed a convex set. Ergodic CP distributions are extremal points of $\CPD$, since they are extremal points for the larger set of $M^\square$-invariant measures, and conversely, since we know from Proposition \ref{prop:ergodic-CPD} that the ergodic components of CPDs are CPDs.
\end{proof}

\subsection{Extended CP distributions} The operator $M$ has an extended version $M^*$, defined on $\cM\times B_1$ via
\[
M^*(\mu,x)=(T_{D(x)}^*\mu, T_{D(x)}x).
\]
We have the following analog of Lemma \ref{lem:extended-restricted-correspondence}.

\begin{lemma} \label{lem:restricted-to-extended-CP}
Given a CP distribution $Q$, there is an $M^*$-invariant distribution $\widehat{Q}$ on $\MM^*\times B_1$ such that $\widehat{Q}^\square = Q$, where $\widehat{Q}^\square$ is the push-down of $\widehat{Q}$ under $(\mu,x)\to (\mu^\square,x)$.
\end{lemma}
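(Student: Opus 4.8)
The plan is to reconstruct $\widehat{Q}$ as an inverse-limit-type object, exploiting the fact that the fibers over each point $x\in B_1$ carry a coherent sequence of normalized measures whose total masses are governed by the $M$-orbit. Concretely, I would first analyze how the $M^\square$-dynamics of $Q$ records, implicitly, the mass ratios $\mu(D(x))$ along the orbit: if $(\mu,x)$ is a point in the support and $(\mu_n,x_n)=M^n(\mu,x)$, then the partial products $\prod_{k=0}^{n-1}\mu_k(D(x_k))^{-1}$ describe by how much one must rescale $\mu_n^\square$ to see it as a ``zoom'' of the original measure. The extended version $\widehat{Q}$ should live on pairs $(\eta,x)$ where $\eta\in\MM^*$ is an unbounded measure obtained by taking a weak limit of the measures $T_{D_{k}(x)}^{-1}\big((\mu_k)^\square\big)$ suitably renormalized, where $D_k(x)$ is the step-$k$ dyadic cube containing $x$; here $T_{D_k(x)}^{-1}$ maps $B_1$ onto $\overline{D_k(x)}$, so these push-forwards are measures on larger and larger cubes, and their increasing-union limit $\eta$ is a Radon measure on $\R^d$ with $0\in\spt\eta$ provided $x$ is generic.

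The key steps, in order: (1) Using adaptedness of $Q$, disintegrate $Q=\int \delta_x\otimes \mu \, \mathrm{d}\mu(x)\,\mathrm{d}\overline Q(\mu)$; this guarantees that for $\overline Q$-a.e.\ $\mu$, the point $x$ is $\mu$-typical, so in particular $\mu(D_k(x))>0$ for all $k$ and the magnification orbit is well-defined for all times. (2) For such $(\mu,x)$, define $\eta=\eta_{\mu,x}$ as the monotone limit of $c_k\, T_{D_k(x)}^{-1}(\mu^\square|_{D_k(x)\text{-image}})$, choosing the normalizing constants $c_k$ recursively so that $\eta|_{B_1}$ agrees across scales; the natural choice makes $\eta^\square=\mu^\square$ on the relevant cube and the consistency is exactly the statement that $M^\square$-magnification commutes with restriction. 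Check $\eta\in\MM^*$ for a.e.\ such point (needs $0\in\spt\eta$, which follows because $x$ is a density point of $\mu$). (3) Let $\widehat Q$ be the push-forward of $Q$ under $(\mu,x)\mapsto (\eta_{\mu,x},x)$. Verify $\widehat Q^\square = Q$: this is immediate from the construction since restricting $\eta_{\mu,x}$ to $B_1$ and normalizing returns $\mu^\square$, and the $x$-coordinate is untouched. (4) Verify $M^*$-invariance of $\widehat Q$: one checks $M^*(\eta_{\mu,x},x)=(\eta_{M^\square(\mu,x)})$, i.e.\ that zooming into $D(x)$ in the extended picture corresponds under the correspondence to one step of $M^\square$ downstairs; then $M^*$-invariance of $\widehat Q$ follows from $M^\square$-invariance of $Q$. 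This mirrors exactly the proof of Lemma \ref{lem:extended-restricted-correspondence} (i.e.\ \cite[Lemma 3.1]{Hochman2010}), with the scenery-flow scaling $S_t^*$ replaced by the dyadic magnification $T_{D(x)}^*$ and with the extra bookkeeping of the point coordinate $x$.

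The main obstacle I expect is step (2): making the construction of $\eta_{\mu,x}$ genuinely well-defined and \emph{measurable} in $(\mu,x)$, and showing that the renormalization constants can be chosen coherently on a full-measure set. The subtlety is that $T_{D_k(x)}$ depends on the orbit of $x$ under the dyadic magnification, and if $\mu$ gives positive mass to some dyadic hyperplane through $x$ the operator $T_{D_k(x)}$ is ambiguous; this is precisely why the hypothesis $[Q]=\mathcal L$ (built into the definition of CPD) is used, as emphasized in the remark following the definition of CP distributions — it ensures $Q$-a.e.\ measure gives zero mass to all dyadic boundaries, so $D_k(x)$ is unambiguous and the magnification orbit is defined for all times. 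A secondary technical point is verifying that the monotone limit $\eta$ is locally finite (a Radon measure) rather than blowing up; this again uses adaptedness together with the fact that along a $\overline Q$-typical measure the mass ratios $\mu_k(D(x_k))$ cannot decay too fast — quantitatively, they are bounded below in a Cesàro-average sense by an entropy/ergodic-theorem argument analogous to the one used in the proof of Proposition \ref{prop:ergodic-CPD}. Once these measurability and finiteness issues are dispatched, the invariance and push-down identities are essentially formal.
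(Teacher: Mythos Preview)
There is a genuine gap. Your step (2) does not construct a measure on $\R^d$ extending $\mu$: the cubes $D_k(x)$ shrink as $k\to\infty$, so the maps $T_{D_k(x)}^{-1}\colon B_1\to\overline{D_k(x)}$ land in \emph{smaller} sets, not larger ones, and no monotone limit over an exhaustion of $\R^d$ arises. More fundamentally, you are trying to make $\eta_{\mu,x}$ a measurable function of $(\mu,x)\in\PP(B_1)\times B_1$ alone, but such a function cannot exist with the required $M^*$-equivariance. The pair $(\mu,x)$ carries no information about what the measure should look like outside $B_1$; that information lives in the \emph{past} of the orbit, not the future. Concretely, if $\eta$ depended only on $(\mu,x)$ and satisfied $M^*(\eta_{\mu,x},x)=(\eta_{M(\mu,x)},T_{D(x)}x)$, then $\eta_{\mu,x}|_{T_{D(x)}^{-1}B_1}$ would be forced for every choice of $(\mu,x)$, but the information needed to fill in the sibling cubes of $D(x)$ is simply absent from $(\mu,x)$.

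The paper's proof supplies exactly this missing past: one realizes $(\Xi,M,Q)$ as a one-sided stationary process $(\xi_n)_{n\ge 0}$ with marginal $Q$ and then passes to its two-sided stationary extension $(\xi_n)_{n\in\Z}$ with law $\mathbb P_Q$. Writing $\xi_n=(\mu_n,x_n)$, one zooms \emph{out} from the past: for each $n\ge 0$, $T_{x_{-n},n}^*\mu_{-n}$ is a measure on the increasing cube $T_{x_{-n},n}B_1$, and the relation $M\xi_{-n}=\xi_{-n+1}$ makes these measures compatible restrictions of one another. Their union $\nu$ is then automatically Radon (no Ces\`aro or entropy argument is needed, contrary to what you anticipate), and $\widehat Q$ is the push-down of $\mathbb P_Q$ under $(\xi_n)_{n\in\Z}\mapsto(\nu,x_0)$. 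In short, the correct inverse-limit object is the natural extension of the shift, not a function of a single time-zero state.
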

\begin{proof}
The lemma follows from \cite[Section 3.2]{Hochman2010}, but we give a complete proof as the construction will be used later. We can realize the system $(\Xi, M,Q)$ as a process $(\xi_n)_{n\in\N}$, where $\xi_1\sim Q$ and $M\xi_n = \xi_{n+1}$. By definition, this process is stationary with marginal $Q$ (note that given $\xi_n$, the future of the process $\{ \xi_m:m\ge n\}$ is deterministic). Any stationary one-sided process has an extension to a stationary two-sided process with the same finite-dimensional marginals; thus, let $(\xi_n)_{n\in \Z}$ be the two-sided extension of the above process, and denote its distribution by $\mathbb{P}_Q$. Then, in particular, $\mathbb{P}_Q$ almost surely it holds that $M\xi_n =\xi_{n+1}$ for all $n\in\Z$.

Suppose that a $\mathbb{P}_Q$-typical sequence $\xi_n = (\mu_n,x_n)$ is given. For each $n\ge 0$ and $x\in B_1$, let $T_{x,n}$ be the orientation-preserving homothety that maps the dyadic square of size $2\cdot 2^{-n}$ containing $x$ onto $B_1$ (this is well-defined for $x$ not in the boundary of a dyadic cube). Let $E_n= T_{x_{-n},n} B_1$. The sequence $(E_n)_{n\ge 0}$ is then an increasing sequence of compact sets, starting with $B_1$. Furthermore, $M^k(\mu_{-n},x_{-n})=(\mu_{-n+k},x_{-n+k})$  for all $n,k\in\N$. It follows from these considerations that the limit
\[
\nu = \lim_{n\to\infty} \nu_n = \lim_{n\to\infty} T_{x_{-n},n}^*\mu_{-n}
\]
exists, in the sense that the measures $\nu_n$ are supported on $E_n$ and are compatible: $\nu_{n+k}|_{E_n} =\nu_n$. Moreover, $\nu_0=\mu_0$.

Note that $(\nu,x_0)\in \MM^*\times B_1$ is a function of the sequence $(\mu_n,x_n)$; let $\widehat{Q}$ be the push down of $\mathbb{P}_Q$ under this map. One can check from the definitions that this is the desired extension: $\widehat{Q}$ is $M^*$-invariant, and $\widehat{Q}^\square=Q$.
\end{proof}

The distribution $\widehat{Q}$ is also called the extended version of $Q$. We note a consequence of the construction:
\begin{corollary} \label{cor:extended-CP-is-adapted}
Let $Q$ be a CP distribution, and let $\widehat{Q}$ be its extended version. If $f\colon B_1\to \R$ is a Borel function, then
\[
\int f(x) \,\mathrm{d}\widehat{Q}(\mu,x)= \int f(x) \,\mathrm{d}x.
\]
\end{corollary}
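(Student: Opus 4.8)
\emph{Plan.} The strategy is to reduce the claim to the two defining features of a CP distribution---adaptedness and the normalization $[Q]=\mathcal{L}$---after noting that the point-coordinate is not altered in the passage from $Q$ to its extension $\widehat{Q}$. Indeed, by Lemma~\ref{lem:restricted-to-extended-CP} we have $\widehat{Q}^\square=Q$, where $\widehat{Q}^\square$ is the push-down of $\widehat{Q}$ under $(\mu,x)\mapsto(\mu^\square,x)$; this map fixes the second coordinate, so the law of the point-coordinate under $\widehat{Q}$ coincides with its law under $Q$. Hence it suffices to prove that the push-down of $Q$ under $(\nu,x)\mapsto x$ is normalized Lebesgue measure $\mathcal{L}$ on $B_1$.

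The first step is to upgrade the adaptedness identity~\eqref{eq:adapted} from $f\in C(\Xi)$ to all bounded Borel functions on $\Xi$. Its right-hand side defines a Borel measure $\widetilde{Q}$ on $\Xi=\PP(B_1)\times B_1$ by $\widetilde{Q}(\AA)=\iint\1_\AA(\nu,x)\,\mathrm{d}\nu(x)\,\mathrm{d}\overline{Q}(\nu)$, which is countably additive by monotone convergence and has total mass $\int\nu(B_1)\,\mathrm{d}\overline{Q}(\nu)=1$. Adaptedness states that $\int f\,\mathrm{d}Q=\int f\,\mathrm{d}\widetilde{Q}$ for all $f\in C(\Xi)$; since $\Xi$ is a compact metric space, this forces $Q=\widetilde{Q}$ as measures, and therefore~\eqref{eq:adapted} holds for every bounded Borel $f$.

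Applying this with $f(\nu,x)=g(x)$ for a bounded Borel $g$ on $B_1$ yields
\[
\int g(x)\,\mathrm{d}Q(\nu,x)=\int\Big(\int g(x)\,\mathrm{d}\nu(x)\Big)\,\mathrm{d}\overline{Q}(\nu)=\int g\,\mathrm{d}[Q]=\int g(x)\,\mathrm{d}x,
\]
the final equality because $[Q]=\mathcal{L}$ by definition of a CP distribution (here $\mathrm{d}x$ denotes normalized Lebesgue measure on $B_1$). Combined with the first paragraph, this shows that the push-down of $\widehat{Q}$ under $(\mu,x)\mapsto x$ equals $\mathcal{L}$, which is precisely the asserted identity for bounded Borel $f$; the case of a general Borel $f$ then follows from the equality of these two push-down measures (alternatively, by truncation and monotone convergence).

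No serious obstacle arises. The only points that deserve a little care are the extension of~\eqref{eq:adapted} from continuous to bounded Borel test functions and the (immediate) observation that the normalization map $(\mu,x)\mapsto(\mu^\square,x)$ does not move the point-coordinate; both are routine.
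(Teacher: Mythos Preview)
Your proof is correct and follows essentially the same route as the paper's. Both arguments reduce to showing that the $x$-marginal of $\widehat{Q}$ agrees with that of $Q$, and then invoke adaptedness together with $[Q]=\mathcal{L}$; the only cosmetic difference is that the paper appeals directly to the explicit construction of $\widehat{Q}$ via $\mathbb{P}_Q$ (writing $\int f(x)\,\mathrm{d}\widehat{Q}(\mu,x)=\mathbb{E}_{\mathbb{P}_Q}f(x_0)=\int f(x)\,\mathrm{d}Q(\mu,x)$), whereas you extract the same conclusion from the stated relation $\widehat{Q}^\square=Q$ and the observation that $(\mu,x)\mapsto(\mu^\square,x)$ fixes the second coordinate.
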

\begin{proof}
By the construction of the extended version,
\[
\int f(x) \,\mathrm{d}\widehat{Q}(\mu,x) = \mathbb{E}_{\mathbb{P}_Q} f(x_0) = \int f(x) \,\mathrm{d}Q(\mu,x) = \int f(x)\,\mathrm{d}x,
\]
using that the sequence $(\mu_n,x_n)$ has marginal $Q$ and that $Q$ is adapted and has Lebesgue intensity.
\end{proof}

\subsection{Weak convergence}
\label{subsec:weak_convergence}

To conclude this section we collect a number of standard facts on weak convergence. We will often have to prove weak convergence of distributions on $\Xi$. The following lemma shows that when considering convergence of CP sceneries, it is enough to establish convergence of the measure component.

\begin{lemma} \label{lem:measure-convergence-implies-CP-convergence}
Let $\overline{\langle \mu,x \rangle}_N$ be the projection of $\langle \mu,x \rangle_N$ onto the measure part, i.e.
\[
\overline{\langle \mu,x \rangle}_N := \frac1N \sum_{k = 0}^{N-1} \delta_{T_{D^k(x)}^\square \mu},
\]
where $D^k(x)$ is the dyadic cube of side length $2\cdot 2^{-k}$ containing $x$. If for some measure $\mu$ and a CPD $Q$ it holds that
\[
\overline{\langle \mu,x \rangle}_N \to \ol{Q} \quad\textrm{as } N\to\infty
\]
at $\mu$ almost every $x$, then $\mu$ CP generates $Q$.
\end{lemma}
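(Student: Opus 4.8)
The plan is to use the fact that an \emph{adapted} distribution on $\Xi$ is completely determined by its projection onto the measure component, together with the compactness of $\PP(\Xi)$. Since $\Xi=\PP(B_1)\times B_1$ is compact and metrizable, so is $\PP(\Xi)$, and it therefore suffices to show that, for $\mu$ almost every $x$, every weak accumulation point of the sequence $\langle\mu,x\rangle_N$ equals $Q$.

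First I would fix a Borel set of full $\mu$-measure on which two properties hold simultaneously: the hypothesis $\overline{\langle\mu,x\rangle}_N\to\overline{Q}$; and, by part~(1) of Theorem~\ref{thm:micro-distris-are-CPDs}, the property that every element of $\MD(\mu,x)$ is adapted. Let $x$ be such a point and let $Q'$ be any accumulation point of $\langle\mu,x\rangle_N$ in $\PP(\Xi)$, so that $Q'\in\MD(\mu,x)$ and hence $Q'$ is adapted. The coordinate projection $\pr\colon\Xi\to\PP(B_1)$ is continuous, so $P\mapsto\pr P=\overline{P}$ is weakly continuous on $\PP(\Xi)$; passing to the limit along the subsequence realizing $Q'$ gives $\overline{Q'}=\lim_N\overline{\langle\mu,x\rangle}_N=\overline{Q}$.

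Now both $Q'$ and $Q$ are adapted distributions with the same measure marginal $\overline{Q}$ (recall $Q$ is a CPD, hence adapted), so by the disintegration identity~\eqref{eq:adapted},
\[
\int f\,\mathrm{d}Q' \;=\; \iint f(\nu,y)\,\mathrm{d}\nu(y)\,\mathrm{d}\overline{Q}(\nu) \;=\; \int f\,\mathrm{d}Q \qquad\text{for all } f\in C(\Xi),
\]
and therefore $Q'=Q$. Since every accumulation point of $\langle\mu,x\rangle_N$ equals $Q$ and $\PP(\Xi)$ is sequentially compact, we conclude $\langle\mu,x\rangle_N\to Q$; as this holds at $\mu$ almost every $x$, the measure $\mu$ CP generates $Q$.

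The argument is essentially soft, the one genuinely necessary ingredient being Theorem~\ref{thm:micro-distris-are-CPDs}(1): weak convergence of $\langle\mu,x\rangle_N$ on $\Xi$ does \emph{not} follow from convergence of the measure marginals for a general distribution on $\Xi$, and adaptedness is precisely what restores control of the point coordinate $y$. So the main (and rather minor) obstacle is simply to route the point component through adaptedness of the micromeasure distributions rather than trying to control it directly; the remaining steps — compactness of $\PP(\Xi)$, continuity of the projection, and the disintegration formula — are routine.
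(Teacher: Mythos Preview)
Your proof is correct and follows essentially the same approach as the paper's: both argue that any subsequential limit of $\langle\mu,x\rangle_N$ is adapted (via Theorem~\ref{thm:micro-distris-are-CPDs}(1)) with measure marginal $\overline{Q}$, and is therefore forced to equal $Q$. The paper compresses this into a single sentence, whereas you spell out the compactness, the continuity of the projection, and the disintegration identity explicitly.
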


\begin{proof}
By Theorem \ref{thm:micro-distris-are-CPDs}(1) and the hypothesis, for $\mu$ almost all $x$ any subsequential limit of  $\langle \mu,x \rangle_N$ is an adapted distribution with measure marginal $\overline{Q}$, hence it equals $Q$.
\end{proof}

The above lemma will be repeatedly used without further reference in the later sections. We will often need to use a metric which induces the weak topology on probability measures.

\begin{definition}\label{def:metriconmeasures} For any compact metric space $X$, we define a distance $d_X(\mu,\nu)$ between two finite measures $\mu$ and $\nu$ on $X$ by
\[
d_X(\mu,\nu) = \sup_{ f\in \Lip_1(X)} \int f\, \mathrm{d}(\mu-\nu),
\]
where $\Lip_1(X)$ is the class of Lipschitz functions $f : X \to \R$ with Lipschitz constant $1$ and $\| f \|_\infty \leq 1$.
\end{definition}

\begin{remark} \label{rem:wk-conv-bounded-f}
It is easy to see that $d_X$ is indeed a metric on the finite measures on $X$. It induces the weak topology (see e.g.\ \cite[Chapter 14]{Mattila1995}; the statement there is for $\R^d$ and a slightly different definition of the metric, but the proof extends to our situation with minor modifications). In fact, we will only need to know that the restriction of $d_X$ to $\cP(X)$ induces the weak topology on $\cP(X)$. Without the $\| f \|_\infty \leq 1$ condition, $d_X$ restricted to $\cP(X)$ is known as the $1$st Wasserstein metric, and it is easy to see that both metrics are equivalent (up to multiplicative constants) on $\cP(X)$, but we will have no use for this.

We will slightly abuse notation and denote by $d$ both the metric above on the space of measures $\PP(B_1)$ and on the space of distribution $\PP(\PP(B_1))$ as it is clear from the context which space we are dealing with.
\end{remark}


Even though weak convergence is defined in terms of continuous functions, it still holds for functionals whose discontinuity set is null for the limiting measure.

\begin{lemma} \label{lem:weak-convergence}
Let $X$ be a locally compact metric space, and let $\mu_n, \mu\in\PP(X)$. If $\mu_n\to \mu$ weakly and $f\colon X\to \R$ is a function such that
\[
\mu(\{x\in X: f \textrm{ \emph{is discontinuous at} } x\}) = 0,
\]
then $\int f\,\mathrm{d}\mu_n\to \int f \,\mathrm{d}\mu$.
\end{lemma}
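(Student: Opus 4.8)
The plan is to reduce to the standard portmanteau theorem by sandwiching $f$ between continuous functions on the complement of a small neighborhood of its discontinuity set. Let $D$ denote the set of discontinuity points of $f$; by hypothesis $\mu(D)=0$. First I would observe that the statement is local and that $f$ may as well be assumed bounded: fix $\varepsilon>0$, and since $\mu$ is a probability measure on a locally compact (hence, in our applications, separable) metric space, choose a compact set $K$ with $\mu(X\setminus K)<\varepsilon$ on which we will do the work; outside $K$ we control the contribution using a cutoff. Actually, in all the uses in this paper $X$ is compact ($X=\PP(B_1)$ or $X=\Xi$) and $f$ is bounded, so the cleanest route is to first state and prove the lemma under the extra hypotheses that $X$ is compact and $f$ is bounded, which suffices for every later application; the locally compact case then follows by the cutoff just described. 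I would add a remark to that effect rather than belabor the non-compact case.

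So assume $X$ compact and $\|f\|_\infty\le M$. The key step is: for each $\delta>0$ let $U_\delta$ be the open $\delta$-neighborhood of $D$. On the closed set $X\setminus U_\delta$ the function $f$ is continuous, hence uniformly continuous. By the Tietze extension theorem, extend $f|_{X\setminus U_\delta}$ to a continuous function $g_\delta\colon X\to\R$ with $\|g_\delta\|_\infty\le M$. Then
\[
\Big| \int f\,\mathrm{d}\mu_n - \int f\,\mathrm{d}\mu \Big|
\le \Big| \int g_\delta\,\mathrm{d}\mu_n - \int g_\delta\,\mathrm{d}\mu \Big|
+ \int_{U_\delta} |f-g_\delta|\,\mathrm{d}\mu_n
+ \int_{U_\delta} |f-g_\delta|\,\mathrm{d}\mu.
\]
The first term tends to $0$ as $n\to\infty$ since $g_\delta\in C(X)$ and $\mu_n\to\mu$ weakly. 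The third term is at most $2M\,\mu(U_\delta)$, which can be made $<\varepsilon$ by choosing $\delta$ small, because $\bigcap_\delta U_\delta \supseteq D$ has the property that $\mu(\overline{U_\delta})\to \mu(D)=0$ — more carefully, $U_{\delta}\downarrow$ a set containing $D$ whose measure is $0$, so continuity from above of $\mu$ gives $\mu(U_\delta)\to 0$. The middle term is the only delicate one: I would bound it using $|f-g_\delta|\le 2M\,\mathbf{1}_{U_\delta}$ and the portmanteau theorem for open sets, $\limsup_n \mu_n(U_\delta)\ge\mu(U_\delta)$ is the wrong direction; instead I want an \emph{upper} bound on $\limsup_n \mu_n(U_\delta)$. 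To get that, fix $\delta'<\delta$ and a continuous $h\colon X\to[0,1]$ with $\mathbf{1}_{U_{\delta'}}\le h\le \mathbf{1}_{U_\delta}$ (Urysohn); then $\limsup_n \mu_n(U_{\delta'})\le \lim_n\int h\,\mathrm{d}\mu_n=\int h\,\mathrm{d}\mu\le\mu(U_\delta)$. Combining: $\limsup_n \big|\int f\,\mathrm{d}\mu_n-\int f\,\mathrm{d}\mu\big|\le 0 + 2M\mu(U_\delta) + 2M\mu(U_\delta)$, and letting $\delta\to 0$ the right side goes to $0$.

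The main obstacle is purely bookkeeping: making sure the two "bad region" integrals against $\mu_n$ are controlled uniformly in $n$, which as indicated forces the Urysohn/portmanteau step rather than a naive application of weak convergence to an indicator (indicators of open sets are not continuous, and weak convergence only gives a one-sided inequality). Everything else — Tietze extension to produce $g_\delta$, continuity from above of $\mu$ to kill $\mu(U_\delta)$, and the triangle-inequality split — is routine. For the locally compact, unbounded-$f$ version one additionally truncates $f$ at height $M$ and multiplies by a continuous compactly supported cutoff $\varphi$ with $\varphi\equiv 1$ on a large compact set $K$ chosen so that $\mu(X\setminus K)$ and $\limsup_n\mu_n(X\setminus K)$ are small (again via Urysohn), but since we never need this generality I would relegate it to a one-line remark.
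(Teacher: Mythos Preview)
The paper does not prove this lemma at all; it simply cites Billingsley's book (``See e.g.\ [Billingsley, Theorem 2.7] for a stronger statement''). So there is no argument in the paper to compare your approach against.

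Your proposal, however, contains a genuine gap. You claim that $\mu(U_\delta)\to 0$ as $\delta\to 0$, arguing ``$U_\delta\downarrow$ a set containing $D$ whose measure is $0$''. But $\bigcap_{\delta>0} U_\delta=\overline{D}$, not $D$, and the hypothesis is $\mu(D)=0$, not $\mu(\overline{D})=0$. These can differ drastically: take $X=[0,1]$, $\mu=\mathcal{L}$, and $f$ the Thomae function, whose discontinuity set is $D=\Q\cap[0,1]$. Then $\mu(D)=0$ but $\overline{D}=[0,1]$, so $U_\delta=[0,1]$ for every $\delta>0$ and $\mu(U_\delta)\equiv 1$. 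Your Tietze extension $g_\delta$ is then forced to approximate $f$ on an empty set, and the error terms never become small. (There is also a minor $\delta/\delta'$ mismatch in your Urysohn step: you bound $\mu_n(U_{\delta'})$ but need $\mu_n(U_\delta)$; this is easily repaired by swapping the roles, but it is moot given the larger problem.)

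The standard fix, and essentially what Billingsley does, avoids neighborhoods of $D$ entirely. For bounded $f$ on compact $X$, set $f^*(x)=\limsup_{y\to x} f(y)$ and $f_*(x)=\liminf_{y\to x} f(y)$; these are respectively upper and lower semicontinuous, satisfy $f_*\le f\le f^*$, and agree with $f$ precisely on $X\setminus D$. Portmanteau for semicontinuous functions gives $\limsup_n\int f\,\mathrm{d}\mu_n\le\limsup_n\int f^*\,\mathrm{d}\mu_n\le\int f^*\,\mathrm{d}\mu=\int f\,\mathrm{d}\mu$, the last equality because $f^*=f$ $\mu$-a.e.; the reverse inequality is symmetric. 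This uses only $\mu(D)=0$. It happens that in every application the paper makes of this lemma the set $D$ is closed (boundaries of dyadic cubes), so your argument would go through there, but as written it does not prove the lemma as stated.
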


See e.g.\ \cite[Theorem 2.7]{Billingsley1968} for a stronger statement.

To finish this section, we show that to prove convergence of distributions in $\PP(B_1)$, it is enough to consider test functions ``with a finite resolution''. Recall that $\cD_k$ is the family of dyadic cubes of level $k$, and let $\cF_k$ be the class of functions $f\colon \PP(B_1)\to\R$ such that $f(\mu)$ depends only on the values of $\mu(D)$, $D\in\cD_k$.

\begin{lemma} \label{lem:easy-convergence}
Let $Q_n, Q\in\PP(B_1)$. If $\int f\,\mathrm{d}Q_n\to \int f\,\mathrm{d}Q$ for all $k\in\N$ and all $f\in  \cF_k$, then $Q_n\to Q$.
\end{lemma}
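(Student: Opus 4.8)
The plan is to reduce weak convergence of distributions on the compact space $\PP(B_1)$ to convergence tested against the (countably generated, but dense enough) subclass $\bigcup_k \cF_k$. First I would recall that the weak topology on $\PP(\PP(B_1))$ is metrized by $d$ (Remark \ref{rem:wk-conv-bounded-f}), and that by the Stone--Weierstrass theorem it suffices to check $\int f\,\mathrm{d}Q_n\to\int f\,\mathrm{d}Q$ for $f$ ranging over a subalgebra of $C(\PP(B_1))$ that separates points and contains the constants; equivalently, over any uniformly dense subset of $C(\PP(B_1))$. So the crux is to show that $\bigcup_k \cF_k\cap C(\PP(B_1))$ — or at least its uniform closure — is all of $C(\PP(B_1))$.

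The key step is therefore an approximation statement: \emph{every} $g\in C(\PP(B_1))$ can be uniformly approximated by functions depending only on finitely many dyadic masses $\mu(D)$, $D\in\cD_k$. To see this, fix $k$ and consider the map $\pi_k\colon \PP(B_1)\to \R^{\cD_k}$ sending $\mu$ to the vector $(\mu(D))_{D\in\cD_k}$; this is not continuous (because $\mu\mapsto\mu(D)$ is only upper/lower semicontinuous at measures charging $\partial D$), which is exactly the subtlety flagged elsewhere in the paper. I would get around this by mollifying: replace the indicator $\1_D$ by a Lipschitz bump, or — cleaner — observe that for the purposes of uniform approximation of $g$ one can use a partition-of-unity argument on $\PP(B_1)$ indexed by the finitely many "dyadic resolution $k$ cells" in measure space. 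Concretely, the collection of sets $\{\mu : \mu(D)\in (a_D,b_D) \text{ for all } D\in\cD_k\}$, as the rational intervals vary, has diameter (in $d$) tending to $0$ as $k\to\infty$ uniformly; hence a continuous $g$ is nearly constant on each such set, and averaging $g$ against a Lipschitz partition of unity subordinate to this cover produces a function in (the uniform closure of) $\cF_k$ within $\eps$ of $g$. This uses only that two probability measures on $B_1$ agreeing on all dyadic cubes of level $k$ are within $O(2^{-k})$ in the $d$-metric, which follows because Lipschitz functions on $B_1$ vary by at most $2\cdot 2^{-k}$ across a level-$k$ cube, so $|\int f\,d\mu - \int f\,d\nu|$ is controlled by $\sum_{D\in\cD_k}|\mu(D)-\nu(D)|\cdot 2^{-k}\cdot(\text{something})$ plus a genuinely small term.

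Having established that $\overline{\bigcup_k \cF_k}^{\,\|\cdot\|_\infty}\supseteq C(\PP(B_1))$, the conclusion is immediate: given $g\in C(\PP(B_1))$ and $\eps>0$, pick $k$ and $f\in\cF_k$ with $\|g-f\|_\infty<\eps$; then $|\int g\,dQ_n-\int g\,dQ|\le 2\eps + |\int f\,dQ_n - \int f\,dQ|$, and the last term $\to 0$ by hypothesis, so $\limsup_n|\int g\,dQ_n-\int g\,dQ|\le 2\eps$; let $\eps\to 0$. Since this holds for all $g\in C(\PP(B_1))$ and $\PP(\PP(B_1))$ is compact metrizable, $Q_n\to Q$ weakly.

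\textbf{Main obstacle.} The only real difficulty is the discontinuity of $\mu\mapsto\mu(D)$ on the boundary $\partial D$: functions in $\cF_k$ need not be continuous, so one cannot naively invoke Stone--Weierstrass on $\cF_k$ itself. The fix — approximating by Lipschitz-smoothed versions of dyadic indicators, i.e. genuinely continuous functions that still "have resolution $k$" in the relevant sense — is routine but must be done with a little care so that the smoothed functions can themselves be uniformly approximated by (or are directly handled by the hypothesis applied to) honest members of $\cF_k$; here one may also simply note that the hypothesis applied with $f$ running over continuous functions that are monotone limits of $\cF_k$-functions extends by dominated convergence, or enlarge $\cF_k$ at the outset to include such Lipschitz surrogates, which the paper's downstream usage permits.
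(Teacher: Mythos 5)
Your high-level plan is the right one, and you do isolate the correct quantitative fact: if two probability measures on $B_1$ agree on all level-$k$ dyadic cubes, they are $O(2^{-k})$-close in $d$. But the framing around Stone--Weierstrass leads you astray in a way that leaves a real gap. The hypothesis of the lemma already hands you $\int f\,\mathrm{d}Q_n\to\int f\,\mathrm{d}Q$ for \emph{every} $f\in\cF_k$, continuous or not; so there is no need for the test class to be an algebra of continuous functions, and the discontinuity of $\mu\mapsto\mu(D)$ is a non-issue. What you actually need is, for each $g\in C(\PP(B_1))$ and $\e>0$, some $f\in\cF_k$ with $\|g-f\|_\infty<\e$. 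Your proposed route to this -- a Lipschitz partition of unity subordinate to sets of the form $\{\mu:\mu(D)\in(a_D,b_D)\}$ -- does not close the loop: those sets are not open (so subordinate partitions of unity need not exist), and even if one mollifies, the resulting averaged function is continuous but no longer a function of $(\mu(D))_{D\in\cD_k}$ alone, so it is not in $\cF_k$; you then assert it lies in the uniform closure of $\cF_k$, which is precisely what is to be proved. The closing remark that one could ``enlarge $\cF_k$ at the outset'' changes the statement rather than proving it.

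The clean fix, which is what the paper does, is to exhibit $f$ directly: for $\mu\in\PP(B_1)$ put $\mu_k=\sum_{D\in\cD_k}\mu(D)\delta_{z_D}$ where $z_D$ is the center of $D$. Then $\mu_k$ depends only on the level-$k$ masses, and testing against $1$-Lipschitz functions bounded by $1$ gives $d(\mu,\mu_k)\le\sqrt{d}\,2^{-k}$. For $f\in C(\PP(B_1))$ set $f_k(\mu):=f(\mu_k)$; this is manifestly in $\cF_k$, and uniform continuity of $f$ on the compact space $\PP(B_1)$ gives $\|f-f_k\|_\infty<\e$ for $k$ large. The hypothesis then applies to $f_k$ and the $\limsup/\liminf$ squeeze you wrote at the end finishes the proof. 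So you had the right target and the right metric estimate, but you never produced an honest $\cF_k$-approximant; once you replace the partition-of-unity detour with the explicit discretization $\mu\mapsto\mu_k$, the argument is complete and matches the paper's.
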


\begin{proof}
If the functions in $\cF_k$ were continuous, this would be a direct application of the Stone-Weierstrass Theorem. It would be possible to still rely on Stone-Weierstrass by approximating elements of $\cF_k$ by continuous functions in a suitable way, but we give a direct argument.

Given $\mu\in\PP(B_1)$, let $\mu_k=\sum_{D\in\cD_k} \mu(D)\delta_{z_D}$, where $z_D$ is the center of $D$. Note that if $\phi\colon B_1\to\R$ is $1$-Lipschitz, then $|\phi(z)-\phi(z_D)|\le \sqrt{d} 2^{-k}$ for any $z\in D$, and therefore
\begin{eqnarray*}
\left|\int \phi \,\mathrm{d}\mu-\int \phi \,\mathrm{d}\mu_k \right|&=& \left|\sum_{D\in\cD_k} \mu(D)\left(\frac{\int_D \phi\,\mathrm{d}\mu}{\mu(D)}-  \phi(z_D)\right) \right|\\
&\le& \sum_{D\in\cD_k} \mu(D) \sqrt{d} 2^{-k} = \sqrt{d} 2^{-k}.
\end{eqnarray*}
This shows that $d(\mu,\mu_k)\le \sqrt{d} 2^{-k}$.

Now let $f\in C(\PP(B_1))$, and write $f_k(\mu)=f(\mu_k)$. Then $f_k\in\cF_k$ by definition, and hence, by the hypothesis,
\[
\lim_{n\to\infty} \int f_k\, \mathrm{d}Q_n = \int f_k\, \mathrm{d}Q.
\]
Since $f$ is continuous, it is uniformly continuous; hence given $\e>0$ there is $k>0$ such that $|f(\mu)-f(\nu)|<\e$ if $d(\mu,\nu)< \sqrt{d} \, 2^{-k}$. In particular, by the above, $|f(\mu)-f_k(\mu)|<\e$ for all $\mu\in\PP(B_1)$. By writing $f=(f-f_k)+f_k$, it follows from the above that
\[
\limsup_{n\to\infty} \int f\, \mathrm{d}Q_n \le \int f\,\mathrm{d}Q+\e,
\]
and likewise with $\liminf$. Since $\e>0$ was arbitrary, this completes the proof.
\end{proof}

\section{Fractal distributions form a closed set}
\label{sec:FDs-closed}

In this section, we prove the following precise version of Theorem \ref{thm:FDs-are-closed}.

\begin{theorem} \label{thm:FDs-are-closed-2}
 The set $\FD$ is closed in the weak topology.
\end{theorem}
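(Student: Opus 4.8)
The plan is to take a sequence $P_n \in \FD$ with $P_n \to P$ weakly (in $\PP(\MM^\square)$, say, and then lift to the extended version via Lemma \ref{lem:extended-restricted-correspondence}) and show $P \in \FD$. There are two things to check: that $P$ is scale-invariant, and that $P$ is quasi-Palm. Scale-invariance is the easy half. The action $S_t^\square$ on $\MM^\square = \PP(B_1)$ is only defined on $\MM^*$ and is discontinuous there, but the discontinuities are controlled: $S_t^\square$ is continuous at any measure $\nu$ with $0 \in \spt\nu$ that moreover gives zero mass to the sphere $\partial B(0,e^{-t})$ (so that the renormalization $\nu(B(0,e^{-t}))$ varies continuously). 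Since each $P_n$ is scale-invariant, $P_n$-a.e.\ $\nu$ lies in $\MM^*$; and one needs that the limit $P$ is still supported on $\MM^*$ and that, for each fixed $t$, $P$-a.e.\ measure gives no mass to $\partial B(0,e^{-t})$. The first follows because $\{\nu : 0 \in \spt\nu\}$, while not closed, contains the closed sets $\{\nu : \nu(B(0,r)) \ge c\}$... more carefully, one uses that $\MM^* = \bigcap_{m}\overline{\{\nu : \nu(B(0,1/m)) > 0\}}$ is a countable intersection of... — the standard trick here (this is exactly the kind of ``robustness'' the authors advertise) is to observe that scale-invariance of $P_n$ forces a uniform lower bound on the growth rate of $\nu(B(0,r))$ which passes to the limit. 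Given support in $\MM^*$, applying Lemma \ref{lem:weak-convergence} to the (a.e.-continuous) functional $\nu \mapsto \int f\,\mathrm{d}(S_t^\square\nu)$ yields $\int g\circ S_t^\square\,\mathrm dP = \lim \int g\circ S_t^\square\,\mathrm dP_n = \lim \int g\,\mathrm dP_n = \int g\,\mathrm dP$, i.e.\ $S_t^\square$-invariance of $P$.

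The main obstacle is the quasi-Palm property, since, as the authors stress, quasi-Palm is not a closed condition. The idea is to exploit the ergodic decomposition together with Theorem \ref{thm:ergodic-components-of-FDs-are-FDs} and, crucially, a structural characterization of quasi-Palm distributions among scale-invariant ones. The cleanest route is: first reduce to the ergodic case. An $S^*$-invariant $P$ is a fractal distribution iff almost every ergodic component is an (ergodic) fractal distribution — one direction is Theorem \ref{thm:ergodic-components-of-FDs-are-FDs}; the converse holds because the quasi-Palm property is an ``a.e.'' statement that can be integrated over ergodic components. So it suffices to show: the weak closure of $\FD$ consists of scale-invariant distributions all of whose ergodic components are EFDs. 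For an ergodic scale-invariant $P$, being quasi-Palm is equivalent to a pointwise statement that the ergodic theorem makes accessible. Concretely, Hochman's machinery gives that an ergodic $S^*$-invariant $P$ is an EFD iff $P$ is ``$S^*$-generated'' by $P$-typical measures, i.e.\ $P$-a.e.\ $\mu$ generates $P$ in the sense of Definition \ref{def:usm} — and the generation property, being expressible through convergence of the scenery averages $\langle\mu\rangle_{x,T}$ at $\mu$-a.e.\ $x$, is the kind of thing one can hope to transfer under weak limits with a diagonal/compactness argument.

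Putting it together, the key steps in order are: (1) establish scale-invariance of $P$ via the controlled-discontinuity argument and Lemma \ref{lem:weak-convergence}; (2) pass to ergodic decompositions $P = \int P_\alpha\,\mathrm dP(\alpha)$ and reduce to showing $P_\alpha \in \EFD$ for $P$-a.e.\ $\alpha$, using that quasi-Palm integrates over ergodic components; (3) for the ergodic components, use a characterization of EFDs in terms of a pointwise ergodic-theoretic property (generation, or equivalently the quasi-Palm identity $P \sim \iint_U \delta_{T_x^*\mu}\,\mathrm d\mu(x)\,\mathrm dP(\mu)$ tested against countably many continuous functions on $U$ a fixed bounded neighborhood of $0$), and show this property is preserved in the weak limit — here one writes the quasi-Palm identity in its ``$P=$'' (Palm-type, after the intensity normalization built into working with $\MM^\square$) integrated form $\int F(\mu)\,\mathrm dP(\mu) = \iint_U F(T_x^*\mu)\,\mathrm d\mu(x)\,\mathrm dP(\mu)$ for $F$ continuous and bounded, observe that both sides are weakly continuous in $P$ (the right side because $\mu \mapsto \int_U F(T_x^*\mu)\,\mathrm d\mu(x)$ is bounded and continuous on the relevant set of measures, modulo the same mass-of-sphere caveat handled as in step (1)), and conclude the identity survives the limit. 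I expect step (3), and in particular verifying that the quasi-Palm identity can be written in a form that is genuinely weakly continuous — rather than merely upper or lower semicontinuous — to be the crux; the twin discontinuities of translation-renormalization and of the domain $\MM^*$ are what make this delicate, and the resolution will lean on the fact (noted after the definition of CP distributions, and implicit in Hochman's work) that these distributions charge no mass on boundaries of the relevant cubes/balls.
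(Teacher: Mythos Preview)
Your approach is genuinely different from the paper's, and it has a real gap at step (3).

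The paper never tries to verify quasi-Palm directly for the limit. Instead it uses the equivalence between FDs and CP distributions (Theorem \ref{thm:equivalence-CP-FD}): given $P_n\to P$ with $P_n\in\FD$, pick CPDs $Q_n$ with $\cent(\widehat{Q}_n)^\square=P_n$, pass to a subsequential limit $Q$ (which is a CPD because $\CPD$ \emph{is} easily seen to be closed: adaptedness is an equality, not an equivalence), and then show the composite map $Q\mapsto\cent(\widehat{Q})^\square$ is continuous on $\CPD$. The whole point is that on the CP side the analogue of quasi-Palm is the adaptedness identity \eqref{eq:adapted}, which is a genuine equality of integrals against continuous test functions and therefore manifestly closed.

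Your step (3) conflates quasi-Palm with Palm. The quasi-Palm condition is $P\sim\iint_U\delta_{T_x^*\mu}\,\mathrm d\mu(x)\,\mathrm dP(\mu)$, an \emph{equivalence} of measures, not the equality $\int F\,\mathrm dP=\iint_U F(T_x^*\mu)\,\mathrm d\mu(x)\,\mathrm dP(\mu)$ you write down. There is no ``intensity normalization built into $\MM^\square$'' that turns the equivalence into an equality for general FDs; that equality is the strictly stronger Palm property, and most FDs are not Palm. Since equivalence of measures is not a closed relation, testing against countably many continuous $F$ cannot recover it in the limit. This is precisely why the authors remark that ``the quasi-Palm property is not a closed property'' and why the theorem is nontrivial.

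Your alternative in step (3), the ``generation'' characterization, is also circular. For an ergodic $S^*$-invariant $P$, the ergodic theorem gives $\langle\mu\rangle_{0,T}\to P$ for $P$-a.e.\ $\mu$ (scenery at the origin), but generation in the sense of Definition \ref{def:usm} requires $\langle\mu\rangle_{x,T}\to P$ for $\mu$-a.e.\ $x$. Passing from $x=0$ to $\mu$-typical $x$ is exactly the content of the quasi-Palm property, so this does not give an independent characterization. Finally, the ergodic-decomposition reduction in step (2) does not help: the ergodic components $P_\alpha$ of the limit $P$ bear no a priori relation to the approximants $P_n$ or their components, so you have no leverage on them.
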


The proof relies on the results in \cite{Hochman2010} relating FDs to CP distributions, which we first recall. The main tool is the \textit{centering} operation that will provide a way to map CPDs onto FDs, and vice versa.

\begin{definition}[Centering operation]
Let $C\colon \MM^*\times B_1\times [0,\log 2]\to \MM^*$ be given by
$$C(\mu,x,t)=S_t^* T_x\mu.$$
If $Q$ is a distribution on $\MM^*\times B_1$, then its (continuous) \emph{centering} $\cent(Q)$ is the push-down of $Q\times \lambda$ under $C$, that is,
$$\cent(Q) := C(Q \times \lambda),$$
where $\lambda$ is normalized Lebesgue measure on $[0,\log 2]$.
\end{definition}
\providecommand{\cent}{\mathop{\rm cent}\nolimits}
\begin{theorem} \label{thm:equivalence-CP-FD}
Let $Q$ be a CP distribution, and let $\widehat{Q}$ be its extended version (given by Lemma \ref{lem:restricted-to-extended-CP}). Then $\cent(\widehat{Q})$ is an extended FD.

Conversely, given an extended fractal distribution $P$, there exists  a CP distribution $Q$ such that $P=\cent(\widehat{Q})$, with $\widehat{Q}$ the extended version of $Q$.
\end{theorem}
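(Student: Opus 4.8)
The plan is to carry out Hochman's correspondence between CP distributions and fractal distributions from \cite[Section 4]{Hochman2010}, adapted to the extended version $\widehat Q$ produced by Lemma \ref{lem:restricted-to-extended-CP} and to the Lebesgue-intensity normalization used here. The unifying observation is that the centering operation exhibits the scenery flow acting on $\cent(\widehat Q)$ as the suspension of the discrete system $(\MM^*\times B_1,\widehat Q,M^*)$ with constant roof $\log 2$: the dyadic magnification $T_{D(x)}$ is a translation composed with $S_{\log 2}$, so integrating the extra time parameter $t$ uniformly over $[0,\log 2]$ precisely bridges the gap between consecutive dyadic scales. Making this explicit is the first step, and it settles the scale-invariance half of both directions more or less at once.

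For the direction from CPDs to FDs, I would first note that $\cent(\widehat Q)$ is supported on $\MM^*$: by adaptedness of $Q$ -- inherited by $\widehat Q$ through Corollary \ref{cor:extended-CP-is-adapted} and the construction in Lemma \ref{lem:restricted-to-extended-CP} -- the marked point is distributed according to the measure, so recentering puts the origin at a point of the support. Scale-invariance of $\cent(\widehat Q)$ under $S^*_s$ then follows from the suspension picture together with $M^*\widehat Q=\widehat Q$: for $s+t<\log 2$ one has $S^*_s C(\mu,x,t)=C(\mu,x,s+t)$, and for $s+t\ge\log 2$ one wraps around using $M^*$-invariance. The real work is the quasi-Palm property. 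Writing a typical measure of $\cent(\widehat Q)$ as $\nu=S^*_t T_x\mu$, adaptedness says the origin of $\nu$ sits at a ``measure-typical'' location, so translating $\nu$ to a nearby point of $\spt\nu$ again yields an element of the image of the centering map; to reach an \emph{arbitrary} point $z\in\spt\nu$ -- which may lie far outside $B_1$, and this is exactly why one must work with measures of unbounded support -- one uses $M^*$-invariance of $\widehat Q$ to zoom out until $z$ falls in the central cube, applies adaptedness at that scale, and zooms back in. I expect this passage to be the main obstacle: it is where the ``soft'' quasi-Palm symmetry has to be reconciled with the rigid dyadic combinatorics, and it is the content of the corresponding argument in \cite[Section 4]{Hochman2010}.

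For the converse, given an extended fractal distribution $P$ I would build a candidate $Q$ on $\Xi$ by sampling $\nu\sim P$, setting $\mu=\nu^\square$, and sampling the marked point $x\sim\nu^\square$; adaptedness then holds by construction and propagates under $M$ because, conditioned on the level-one dyadic cube $D(x)$, the law of the pushed point $T_{D(x)}x$ is the corresponding conditional of $T^\square_{D(x)}\mu$. The invariance $MQ=Q$ would come from the fact that magnifying into a $\nu$-weighted-random level-one dyadic cube equals recentering $\nu$ at the center of the dyadic cube around a $\nu$-random point followed by $S^*_{\log 2}$; the recentering preserves $P$ up to equivalence by the quasi-Palm property, $S^*_{\log 2}$ preserves it by scale-invariance, and the bounded deterministic discrepancy between a point and its dyadic-cube center is absorbed into the adaptedness bookkeeping. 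The delicate point is that $[Q]$ must equal $\mathcal L$ -- this is where our convention is stronger than Hochman's -- and I would deduce it from the quasi-Palm symmetry, which forces the restricted-and-normalized measure to be spatially uniform in expectation, falling back if necessary on the random-translation device used in Theorem \ref{thm:micro-distris-are-CPDs}. Finally, to verify $\cent(\widehat Q)=P$ on the nose (not merely up to equivalence) I would unwind the centering: averaging over the recentering within a level-one cube and over the scales $t\in[0,\log 2]$ reconstitutes $\iint_{B_1}\delta_{T^*_x\mu}\,\mathrm d\mu(x)\,\mathrm dP(\mu)$, which the quasi-Palm property identifies with $P$. In the write-up I would phrase the whole argument as a verification that $\widehat Q$ and the Lebesgue-intensity requirement fit the framework of \cite[Section 4]{Hochman2010}, rather than reproving that framework from scratch.
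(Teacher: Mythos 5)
The paper's proof is simply a citation to Hochman's Theorems~1.14, 1.15, and Proposition~1.16, with a remark that Proposition~1.16 supplies the Lebesgue-intensity property that the definition of CPD demands here. Your forward direction --- the suspension picture for $(\MM^*\times B_1,\widehat Q,M^*)$ with roof $\log 2$, adaptedness placing the origin at a measure-typical point, and quasi-Palm obtained by climbing the $M^*$-tower to reach distant points of the support --- is a reasonable outline of what Hochman does in that part of \cite[Section~4]{Hochman2010}.

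The converse direction, however, has a genuine gap. The naive construction (sample $\nu\sim P$, set $\mu=\nu^\square$, draw $x\sim\nu^\square$) is not in general $M$-invariant, and the sentence claiming that ``the bounded deterministic discrepancy between a point and its dyadic-cube center is absorbed into the adaptedness bookkeeping'' is exactly where the argument breaks. The map $T_{D(x)}$ factors as $S_{\log 2}$ composed with translation by the cube center $c(x)\in\{\pm 1/2\}^d$; scale-invariance absorbs the $S_{\log 2}$ factor, but the quasi-Palm property only controls translations by $\nu$-random points, not by the grid-aligned, finitely-valued offset $c(x)$, and the resulting mixture $\sum_i \nu^\square(D_i)\,\delta_{T_{c_i}\nu}$ has no reason to reproduce $P$. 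This is not bookkeeping: the incompatibility of a fixed dyadic grid with the quasi-Palm symmetry is precisely what makes the CPD--FD correspondence non-trivial. (Secondary issues: your final step derives $\cent(\widehat Q)=P$ from quasi-Palm, but quasi-Palm yields only equivalence of distributions, not equality; and $[Q]=\mathcal{L}$ is also not forced by the naive choice.) Hochman's actual argument for the converse is dynamical rather than constructive: for an ergodic $P$, a $P$-typical measure is a USM generating $P$; a random translation makes it CP-generate an ergodic CPD $Q$ with Lebesgue intensity (cf.\ Theorem~\ref{thm:micro-distris-are-CPDs}); and the compatibility of CP-generation with scenery-flow generation (essentially Proposition~\ref{CPtoFD} of this paper) then forces $\cent(\widehat Q)^\square=P^\square$, with the non-ergodic case following via the ergodic decomposition. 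The random translation is the device that simultaneously delivers $M$-invariance and Lebesgue intensity, and it cannot be dispensed with in favor of the direct construction you propose.
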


\begin{proof}
See \cite[Theorem 1.14, Theorem 1.15, and Proposition 1.16]{Hochman2010}. We remark that although in \cite{Hochman2010} CPDs are not required to have Lebesgue intensity measure, Proposition 1.16 states that one can find an appropriate CPD with this additional property.
\end{proof}

This correspondence theorem between CPDs and FDs allows us to reduce the investigation back to CP distributions. In this ``discrete'' setting, the analogue of Theorem \ref{thm:FDs-are-closed-2} is quite straightforward:

\begin{lemma} \label{lem:CP-distributions-are-closed}
$\CPD$ is closed in $\PP(\Xi)$.
\end{lemma}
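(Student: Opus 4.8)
The plan is to show that $\CPD$ is closed by verifying that each of the three defining conditions --- $M$-invariance, adaptedness, and having Lebesgue intensity --- is preserved under weak limits. Suppose $Q_n \to Q$ weakly in $\PP(\Xi)$ with each $Q_n \in \CPD$. The only genuine subtlety is that the magnification operator $M$ is discontinuous (on the boundaries of the dyadic cubes in $\DD$), so we must take care that weak convergence still passes through $M$. The key observation is that the condition $[Q_n] = \mathcal{L}$ forces, for each $n$, $Q_n$-almost every measure to give zero mass to $\partial D$ for every $D \in \DD$; and since this condition also holds in the limit (once we prove $[Q]=\mathcal{L}$), $Q$-almost every measure is a continuity point of $M$.

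I would proceed in this order. \textbf{Step 1: Lebesgue intensity is closed.} For a fixed cube $B(z,r) \subset B_1$ with $Q$-a.e.\ measure giving no mass to $\partial B(z,r)$, the functional $\mu \mapsto \mu(B(z,r))$ (extended to $\Xi$ by ignoring the point coordinate) is continuous $Q$-a.e., so by Lemma \ref{lem:weak-convergence}, $[Q](B(z,r)) = \lim_n [Q_n](B(z,r)) = \mathcal{L}(B(z,r))$; since such cubes generate the Borel $\sigma$-algebra and have full measure, $[Q] = \mathcal{L}$. (One must first check that $Q$-a.e.\ measure gives no mass to $\partial B(z,r)$ for a dense set of radii $r$, which follows because $\int \mu(\partial B(z,r))\,\mathrm{d}\overline{Q}(\mu) = 0$ for all but countably many $r$ once we know the intensity is Lebesgue on such cubes --- this can be bootstrapped, or one simply notes that for fixed $z$, $\sum_r \int \mu(\partial B(z,r))\,\mathrm{d}\overline{Q}_n(\mu)$ summed over any finite disjoint family is at most $1$ uniformly in $n$, giving the claim in the limit.) \textbf{Step 2: adaptedness is closed.} Adaptedness says $\int f(\nu,x)\,\mathrm{d}Q(\nu,x) = \iint f(\nu,x)\,\mathrm{d}\nu(x)\,\mathrm{d}\overline{Q}(\nu)$ for all $f \in C(\Xi)$. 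The left side converges by definition of weak convergence. For the right side, the map $\nu \mapsto \int f(\nu,x)\,\mathrm{d}\nu(x)$ is continuous on $\PP(B_1)$ for fixed continuous $f$ (bounded continuous integrand against weakly converging measures), so $\overline{Q}_n \to \overline{Q}$ gives convergence of the right side; hence the identity passes to the limit. \textbf{Step 3: $M$-invariance is closed.} We must show $MQ = Q$, i.e.\ $\int f \circ M \,\mathrm{d}Q = \int f\,\mathrm{d}Q$ for all $f \in C(\Xi)$. By Step 1, $Q$-a.e.\ $(\mu,x)$ has $\mu$ giving zero mass to $\partial D$ for all $D \in \DD$, and at such points $M$ is continuous (both $T_{D(x)}^\square \mu$ and $T_{D(x)}x$ depend continuously on $(\mu,x)$ once $x$ avoids dyadic boundaries and $\mu$ puts no mass there). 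Hence $f \circ M$ is continuous $Q$-a.e., so by Lemma \ref{lem:weak-convergence}, $\int f \circ M\,\mathrm{d}Q = \lim_n \int f \circ M \,\mathrm{d}Q_n = \lim_n \int f\,\mathrm{d}Q_n = \int f\,\mathrm{d}Q$, where the middle equality is $M$-invariance of $Q_n$.

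The main obstacle is the discontinuity of $M$ in Step 3, and more precisely the circularity-free establishment in Step 1 that the limit has Lebesgue intensity (and hence charges no dyadic boundaries). The clean way around this is to prove Step 1 first and self-containedly: show directly that for each $z \in B_1$ the function $r \mapsto [Q_n](\partial B(z,r))$ is supported on an at most countable set with total mass $\le 1$, uniformly in $n$, so that $[Q_n](B(z,r)) \to [Q](B(z,r))$ for all but countably many $r$ via Lemma \ref{lem:weak-convergence} applied to the a.e.-continuous functional $\mu \mapsto \mu(B(z,r))$ --- using that $Q_n$-a.e.\ measure already avoids $\partial B(z,r)$ by the known intensity of $Q_n$. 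Once $[Q] = \mathcal{L}$ is in hand, Steps 2 and 3 are routine applications of Lemma \ref{lem:weak-convergence} and standard facts about weak convergence. With all three conditions verified, $Q \in \CPD$, so $\CPD$ is closed.
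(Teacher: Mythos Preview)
Your approach is essentially the same as the paper's: verify separately that Lebesgue intensity, adaptedness, and $M$-invariance are closed conditions, with the crux being that the discontinuity set of $M$ is $Q$-null once the first two properties are known for the limit $Q$.

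The one place where you work harder than necessary is Step~1. You try to evaluate $[Q]$ on balls and then wrestle with the circularity of needing $\overline{Q}$-a.e.\ $\mu$ to charge no boundaries. The paper sidesteps this entirely: since $[Q]$ is itself a probability measure on $B_1$, the condition $[Q]=\mathcal{L}$ is equivalent to
\[
\iint f\,\mathrm{d}\nu\,\mathrm{d}\overline{Q}(\nu) = \int f(x)\,\mathrm{d}x \quad\text{for all } f\in C(B_1),
\]
and for fixed continuous $f$ the left-hand side is a continuous function of $Q$ (because $\nu\mapsto\int f\,\mathrm{d}\nu$ is continuous on $\PP(B_1)$). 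No boundary issues arise. One small additional point: in Step~3 you write ``By Step~1'' for the claim that $Q$-a.e.\ $(\mu,x)$ is a continuity point of $M$, but you also need adaptedness (Step~2) to conclude that the $x$-coordinate $Q$-a.s.\ avoids dyadic boundaries, since that follows from $Q(\{(\mu,x):x\in\partial D\})=[Q](\partial D)=0$ via the adapted disintegration. The paper makes this explicit.
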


\begin{proof}
Write $\mathcal{U}$ for the distributions in $\PP(\Xi)$ with Lebesgue intensity measure. Note that $Q\in\mathcal{U}$ if and only if
\[
\iint f \,\textrm{d}\nu \,\textrm{d}\overline{Q}(\nu) = \int f(x) \,\mathrm{d}x\quad \textrm{for all }f\in C(B_1).
\]
The left-hand side defines a continuous function of $Q$, so $\mathcal{U}$ is a closed set. For fixed $f\in C(\Xi)$, both sides of (\ref{eq:adapted}) are continuous as a function of $Q$, so the family of adapted distributions is also closed.

It remains to show that if $Q_n$ are CPDs and $Q_n\to Q$, then $Q$ is $M$-invariant. Note that $M$ is discontinuous in general, however it is discontinuous only at pairs $(\mu,x)$ where $x$ has some coordinate equal to $0$ (i.e.\ $x$ is in the boundary of two dyadic cubes of first level). Since we already know that $Q\in\mathcal{U}$ and $Q$ is adapted, $M$ is continuous off a set of $Q$-measure zero. Lemma \ref{lem:weak-convergence} then tells us that $MQ=\lim_{n\to\infty} M Q_n=Q$, as desired.
\end{proof}

\begin{figure}[t!]
\label{fig:centering}
\includegraphics[scale=0.65]{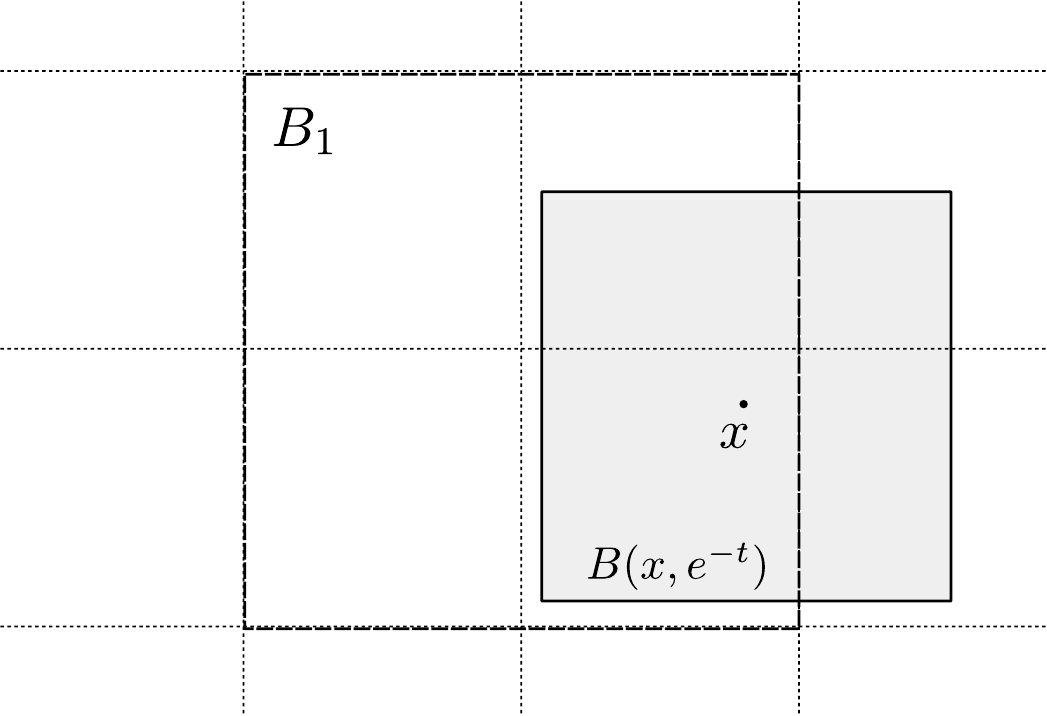}
\caption{The centering $\cent(Q)$ provides a distribution of magnifications $C(\mu,x,t)$ where the scale $e^{-t}$ is chosen uniformly between $1/2$ and $1$ and $(\mu,x)$ according to the distribution $Q$.}
\end{figure}

\begin{proof}[Proof of Theorem \ref{thm:FDs-are-closed-2}]
Let $\{ P_n\}$ be a sequence of restricted FDs such that $P_n\to P$ in  $\PP(B_1)$. For each $n$, the second part of Theorem \ref{thm:equivalence-CP-FD} provides us with a CP distribution $Q_n$  such that $\cent(\widehat{Q}_n)=\widehat{P}_n$, where $\widehat{P}_n, \widehat{Q}_n$ are the corresponding extended versions. By Lemma \ref{lem:CP-distributions-are-closed} and compactness, there exists a CP distribution $Q$ which is an accumulation point of the $Q_n$. Let $\widehat{Q}$ be the extended version of $Q$. It is enough to prove that $\cent(\widehat{Q})^\square=P$, since then the first part of Theorem \ref{thm:equivalence-CP-FD} will yield that $P$ is a restricted FD. In turn, this will follow if we can prove that the map $Q\to\cent(\widehat{Q})^\square$ is continuous on CP distributions.

Note that $C(\mu,x,t)$ depends only on the restriction of $\mu$ to $B_2=B(0,2)=[-2,2]^d$. Let
$$\mu^\Diamond := \mu_{B_2},$$
with $Q^\Diamond, \MM^\Diamond$, etc being defined in the usual way. The desired continuity will then follow if we can establish the following two claims:
\begin{enumerate}
\item The map $Q\to \widehat{Q}^\Diamond$ is continuous from the set of CP distributions to $\cP(\MM^\Diamond\times B_1)$.
\item The map $Q\to \cent(Q)^\square$ is continuous from $\cP(\MM^\Diamond\times B_1)$ to $\cP(\MM)$.
\end{enumerate}
These claims are proved in the following two lemmas.
\end{proof}

\begin{lemma} \label{lem:restricted-to-extended-is-continuous}
The map $Q\to \widehat{Q}^\Diamond$ is continuous on the set of CP distributions.
\end{lemma}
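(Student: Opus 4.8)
The plan is to show that the map $Q \mapsto \widehat{Q}^\Diamond$ is continuous by unwinding the explicit construction of the extended version $\widehat{Q}$ given in the proof of Lemma \ref{lem:restricted-to-extended-CP}, and observing that this construction only involves \emph{finitely many} magnification steps once we restrict the resulting measure to the fixed ball $B_2$. First I would recall that $\widehat{Q}$ is obtained as the push-down of the two-sided stationary extension $\mathbb{P}_Q$ of the Markov process $(\xi_n)_{n\in\N}$ with $\xi_1 \sim Q$, via the map sending $(\mu_n,x_n)_{n\in\Z}$ to $(\nu,x_0)$ where $\nu = \lim_{n\to\infty} T_{x_{-n},n}^* \mu_{-n}$. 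The key observation is that $B_2 = T_{x_{-m},m}(B_1)$ for some bounded number $m$ of steps; more precisely, since each $T_{x_{-1},1}^{-1}$ contracts by a factor $1/2$, after finitely many steps the set $E_n = T_{x_{-n},n} B_1$ contains $B_2$, and by the compatibility relation $\nu_{n+k}|_{E_n} = \nu_n$, the restriction $\nu^\Diamond = \nu|_{B_2}$ normalized is a function of only $(\mu_{-m},x_{-m},\ldots,x_0)$ for a fixed $m$ (say $m$ large enough that $E_m \supset B_2$ always — here one must be slightly careful because $E_n$ depends on the $x_{-i}$, but since each magnification at least doubles the scale, $m = 2$ or $3$ suffices uniformly, modulo the measure-zero set where some $x_{-i}$ lies on a dyadic boundary).

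Next I would make this dependence precise: there is a fixed $m\in\N$ and a Borel map $G\colon \Xi^{m+1} \to \MM^\Diamond \times B_1$, continuous off a set that is null for every CPD (namely the configurations where some intermediate point hits a dyadic boundary — here adaptedness plus Lebesgue intensity guarantees this set has measure zero, exactly as in the remark following the definition of CP distributions), such that $\widehat{Q}^\Diamond = G(\mathbb{P}_Q|_{\text{coordinates } -m,\ldots,0})$. Now the finite-dimensional marginals of $\mathbb{P}_Q$ on coordinates $-m,\ldots,0$ agree with those of the stationary one-sided process, i.e.\ they are determined by $Q$ and the deterministic map $M$: concretely, the law of $(\xi_{-m},\ldots,\xi_0)$ is the push-down of $Q$ under $(\mu,x)\mapsto ((\mu,x), M(\mu,x),\ldots,M^m(\mu,x))$. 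Since $M$ is continuous off a $Q$-null set (again by adaptedness and Lebesgue intensity) and $G$ is continuous off a null set, Lemma \ref{lem:weak-convergence} applies: if $Q_n \to Q$ are CPDs, then the relevant push-downs converge weakly, and composing with $G$ gives $\widehat{Q}_n^\Diamond \to \widehat{Q}^\Diamond$.

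The main obstacle I anticipate is bookkeeping the discontinuities cleanly: both $M$ and the reconstruction map $G$ are genuinely discontinuous (on dyadic boundaries), so one cannot invoke naive continuity, and one must repeatedly verify that the relevant bad sets are null not just for the limit $Q$ but uniformly in a way compatible with Lemma \ref{lem:weak-convergence} — the point being that $Q$ itself is a CPD (an accumulation point of CPDs, using Lemma \ref{lem:CP-distributions-are-closed}), hence adapted with Lebesgue intensity, so $Q$ almost every measure gives zero mass to all dyadic boundaries, which is exactly what makes the discontinuity set of $G$ a $\widehat{Q}^\Diamond$-preimage of measure zero. A secondary technical point is confirming that the number of magnification steps $m$ needed to ensure $E_m \supseteq B_2$ can be chosen uniformly: since $T_{x_{-i},i}^{-1}$ maps $B_1$ onto a dyadic cube of side $2\cdot 2^{-i}$, the set $E_n$ always contains the cube of side $2\cdot 2^{-1} = 2$... wait, more carefully, $E_1 = T_{x_{-1},1}B_1$ is $B_1$ blown up by $2$, hence has side $4$, so already $E_1 \supseteq B_2$; thus in fact $m=1$ works and $\nu^\Diamond$ depends only on $(\xi_{-1},\xi_0)$, which streamlines the argument considerably. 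I would state this cleanly and then the continuity of $Q \mapsto \widehat{Q}^\Diamond$ reduces to continuity of $Q\mapsto (\text{law of }(\xi_{-1},\xi_0)) = (\mathrm{id}\times M)_* Q$ off null sets, followed by a fixed Borel map, both handled by Lemma \ref{lem:weak-convergence}.
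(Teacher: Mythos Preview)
Your overall strategy---reduce to finitely many steps of the natural extension and apply Lemma \ref{lem:weak-convergence}---is the same as the paper's, but the execution has a genuine gap. The claim that a \emph{fixed} $m$ (in particular $m=1$) suffices for $E_m\supseteq B_2$ is false. The set $E_1=T_{x_{-1},1}B_1$ is indeed a cube of side $4$, but it is \emph{not} centered at the origin: if $D(x_{-1})=[0,1]^d$ then $T_{x_{-1},1}(y)=2y-(1,\ldots,1)$ and $E_1=[-3,1]^d$, which does not contain $B_2=[-2,2]^d$. More generally, for every $m$ the event $E_m\not\supseteq B_2$ occurs whenever $x_{-m}$ is close to $\partial B_1$, and this has \emph{positive} measure (under $\mathbb{P}_Q$ the marginal of $x_{-m}$ is Lebesgue on $B_1$, by adaptedness and Lebesgue intensity). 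So this is not a boundary/null-set issue and cannot be disposed of by Lemma \ref{lem:weak-convergence} alone.

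The paper's proof is exactly the repair of this gap: it introduces $\Delta_n=\{y:\dist(y,\partial B_1)\ge 2\cdot 2^{-n}\}$, observes that $x_{-n}\in\Delta_n$ implies $E_n\supseteq B_2$ and hence $\nu^\Diamond=T_{x_{-n},n}^\Diamond\mu_{-n}$ on that event, bounds the complementary piece by $\|f\|_\infty\,\mathcal{L}(B_1\setminus\Delta_n)\le C_d 2^{-n}$ via Corollary \ref{cor:extended-CP-is-adapted}, and only then invokes Lemma \ref{lem:weak-convergence} on the $\Delta_n$ piece before letting $n\to\infty$. In short, your ``uniform $m$'' should be replaced by an approximation argument in $n$, with an explicit $O(2^{-n})$ error coming from the probability that $x_{-n}$ is too close to the boundary.
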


\begin{proof}
Let $f\in C(\MM^\Diamond\times B_1)$. We have to show that if $Q_k, Q$ are CP distributions, and $Q_k\to Q$, then
\[
\int f(\nu^\Diamond,x) \,\mathrm{d}\widehat{Q}_k(\nu,x) \to \int f(\nu^\Diamond,x) \,\mathrm{d}\widehat{Q}(\nu,x) .
\]
We may and will assume that $|f|$ is uniformly bounded by $1$. Fix $n\ge 2$, and write $\Delta_n = \{ y: \dist(y,\partial B_1)\ge 2\cdot 2^{-n}\}$. Decompose
\[
\int f(\nu^\Diamond,x) \,\mathrm{d}\widehat{Q}(\nu,x) = \int_{x\in B_1\setminus\Delta_n} f(\nu^\Diamond,x) \, \mathrm{d}\widehat{Q}(\nu,x) +  \int_{x\in \Delta_n} f(\nu^\Diamond,x) \,\mathrm{d}\widehat{Q}(\nu,x).
\]
Using that $\|f\|_\infty\le 1$ and Corollary \ref{cor:extended-CP-is-adapted}, it follows that
\[
\left| \int_{x\in B_1\setminus \Delta_n} f(\nu^\Diamond,x)\, \mathrm{d}\widehat{Q}(\nu,x)\right| \le \int \mathbf{1}_{B_1\setminus\Delta_n}(x)\, \mathrm{d}\widehat{Q}(\nu,x)
\le C_d\, 2^{-n},
\]
and likewise for $\widehat{Q}_k$, where $C_d>0$ depends on the dimension $d$ only. Thus it is enough to show that, for a fixed $n$,
\[
\lim_{k\to\infty} \int_{x\in \Delta_n} f(\nu^\Diamond,x)\, \mathrm{d}\widehat{Q}_k(\nu,x) = \int_{x\in \Delta_n} f(\nu^\Diamond,x) \,\mathrm{d}\widehat{Q}(\nu,x).
\]

We use the notation of the proof of Lemma \ref{lem:restricted-to-extended-CP}. Let $(\mu_n,x_n)$ be a $\mathbb{P}_Q$-typical point, and let $(\nu,x_0)$ be the resulting element of $\MM\times B_1$. Note that if $x_{-n}\in\Delta_n$, then $T_{x_{-n},n}B_1 \supset B_2$, and it follows that $\nu^\Diamond= T_{x_{-n},n}^\Diamond\mu_{-n}$. Hence, keeping in mind the construction of the extended version given in Lemma \ref{lem:restricted-to-extended-CP},

\begin{eqnarray*}
\int_{x\in \Delta_n} f(\nu^\Diamond,x) \,\mathrm{d}\widehat{Q}(\nu,x) &=& \mathbb{E}_{\mathbb{P}_Q} \mathbf{1}_{\Delta_n}(x_{-n}) f(T_{x_{-n},n}^\Diamond\mu_{-n}, T_{x_{-n},n} x_{-n})\\
&=& \int \mathbf{1}_{\Delta_n}(x)f(T_{x,n}^\Diamond\mu, T_{x,n} x)\, \mathrm{d}Q(\mu,x),
\end{eqnarray*}
where in the last step we used that the stationary sequence $(\mu_n,x_n)$ has marginal $Q$. Likewise, the same holds for $Q_k$ in place of $Q$. The function $(\mu,x)\to \mathbf{1}_{\Delta_n}(x) f(T_{x,n}^\Diamond\mu, T_{x,n} x)$ is continuous except for some pairs $(\mu,x)$ with $x$ at the boundary of two dyadic cubes of side length $2\times 2^{-n}$. Since $Q$ is adapted and has Lebesgue intensity, it gives zero mass to this discontinuity set. We are done thanks to Lemma \ref{lem:weak-convergence}.
\end{proof}

\begin{lemma} \label{lem:centering-is-continuous}
The map $Q\to \cent(Q)^\square$ is continuous from $\cP(\MM^\Diamond\times B_1)$ to $\cP(B_1)$.
\end{lemma}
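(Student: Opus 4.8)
The plan is to reduce the weak convergence of $\cent(Q)^\square$ to the weak convergence of $Q$ on $\MM^\Diamond \times B_1$ by writing the pairing of $\cent(Q)^\square$ against a test function as the pairing of $Q$ against an auxiliary function on $\MM^\Diamond \times B_1$, and then checking that this auxiliary function is continuous up to a $Q$-null set. Fix $g \in C(\MM)$ bounded by $1$; by Lemma \ref{lem:easy-convergence} and Remark \ref{rem:wk-conv-bounded-f} it suffices to treat such $g$ that actually depend only on the restriction of the measure to a fixed ball, or more simply to keep a general $g$ and invoke Lemma \ref{lem:weak-convergence}. By definition of centering and of the push-down under $\mu \to \mu^\square$,
\[
\int g \, \mathrm{d}\cent(Q)^\square = \iint_0^{\log 2} \frac{1}{\log 2} \, g\bigl( (S_t^* T_x \mu)^\square \bigr) \,\mathrm{d}t \,\mathrm{d}Q(\mu,x) = \int G(\mu,x) \,\mathrm{d}Q(\mu,x),
\]
where $G(\mu,x) := \frac{1}{\log 2}\int_0^{\log 2} g\bigl( (S_t^* T_x \mu)^\square \bigr) \,\mathrm{d}t$. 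Crucially, as already noted in the proof of Theorem \ref{thm:FDs-are-closed-2}, for $t \in [0,\log 2]$ and $x \in B_1$ the measure $(S_t^* T_x \mu)^\square$ depends only on $\mu|_{B_2}$, i.e.\ on $\mu^\Diamond$, so $G$ is a well-defined bounded function on $\MM^\Diamond \times B_1$.

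The key step is then to show that $G$ is continuous at $Q$-almost every $(\mu,x) \in \MM^\Diamond \times B_1$, after which Lemma \ref{lem:weak-convergence} applied to $Q_k \to Q$ gives $\int G \,\mathrm{d}Q_k \to \int G \,\mathrm{d}Q$, which is exactly the desired convergence $\cent(Q_k)^\square \to \cent(Q)^\square$. For the continuity, first observe that the map $(x,t,\mu) \mapsto S_t^* T_x \mu$ is continuous wherever the normalization denominator $\mu(e^{-t}(B_1 - x)) = (T_x\mu)(e^{-t}B_1)$ is positive and $\mu$ gives no mass to the boundary of the ball $e^{-t}(B_1-x)$; since we are on $\MM^\Diamond$ and $t \le \log 2$, $x \in B_1$, the relevant ball sits inside $B_2$ and has radius at least $1/2$. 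The set of $(\mu,x)$ for which there exists a bad $t \in [0,\log 2]$ — meaning $(T_x\mu)(e^{-t}B_1) = 0$, equivalently $0 \notin \spt(T_x\mu)$ near that scale — is handled by the standing assumption built into the extended version: $Q$ is adapted with Lebesgue intensity, and $\widehat Q$ (hence the relevant measures) are supported on $\MM^*$, so $0 \in \spt(T_x\mu)$ for $Q$-a.e.\ pair and the denominator is positive for all $t$. Similarly, the set of $(\mu,x)$ such that $\mu$ charges the boundary sphere $\partial(e^{-t}(B_1-x))$ for a positive-Lebesgue-measure set of $t$ is $Q$-null: for fixed $\mu$, the spheres $\partial(e^{-t}(B_1 - x))$ for distinct $t$ are disjoint (they are concentric spheres of strictly varying radii about $x$), so Fubini forces $\mu$-a.e.\ point, and in fact all but countably many $t$, to avoid them; integrating the bad-$t$-set (which has Lebesgue measure zero) over $[0,\log 2]$ still contributes nothing to $G$, so $G$ is given by integrating a function that is continuous in $\mu$ (with $x$ fixed, and jointly) for Lebesgue-a.e.\ $t$ at $Q$-a.e.\ $(\mu,x)$. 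Dominated convergence in $t$ (the integrand is bounded by $1$) then upgrades this to joint continuity of $G$ at $Q$-a.e.\ $(\mu,x)$.

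The main obstacle is precisely this last continuity analysis: the scaling action $S_t^\square$ is genuinely discontinuous (on $\MM$ it is only partially defined, on $\MM^*$), so one cannot simply quote continuity of $C$. The argument has to exploit two separate ``good-set'' facts — that $Q$-typical measures have $0$ in their support (keeping the normalizing mass bounded away from $0$ uniformly in $t\in[0,\log 2]$, which also gives the needed domination) and that $Q$-typical measures do not charge the boundaries of the one-parameter family of balls $e^{-t}(B_1 - x)$ for a.e.\ $t$ — both of which follow from the adaptedness and Lebesgue-intensity of $Q$ as in the remark after the definition of CP distributions. Once these are in hand, Lemma \ref{lem:weak-convergence} closes the argument. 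A minor technical point worth spelling out is uniformity: since all relevant balls have radius in $[1/2, 1]$ and sit in $B_2$, and $G$ is an average over a compact $t$-interval, the dominated convergence and the application of Lemma \ref{lem:weak-convergence} go through with constants depending only on $d$.
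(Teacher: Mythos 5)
The proposal follows essentially the same route as the paper's one-paragraph proof: disintegrate the pairing as $\int G\,\mathrm{d}Q$ with $G(\mu,x)=\frac{1}{\log 2}\int_0^{\log 2} f(S_t^\square T_x\mu)\,\mathrm{d}t$, observe that for each fixed $T_x\mu\in\MM^*$ the set of $t\in[0,\log 2]$ where $S_t^\square$ is discontinuous is Lebesgue-null (the paper notes it is countable because a finite measure can charge at most countably many disjoint spheres; your ``disjoint spheres plus Fubini'' is the same observation), and then use bounded convergence to upgrade this to continuity of the inner $t$-integral. A small cosmetic point: your $(S_t^*T_x\mu)^\square$ is literally the paper's $S_t^\square T_x^*\mu$, and both simply equal $S_t^\square T_x\mu$ since the $T_x^*$-normalization cancels; working with $T_x\mu$ directly avoids worrying about continuity of the map $(\mu,x)\mapsto T_x^*\mu$.

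Where you genuinely diverge is in invoking adaptedness and Lebesgue intensity of $Q$. The paper's argument needs no property of $Q$ at all: it shows the map $\nu\mapsto \int_0^{\log 2} f(S_t^\square\nu)\,\mathrm{d}\lambda(t)$ is continuous at every $\nu\in\MM^*$, hence $G$ is continuous on $\{(\mu,x): x\in\spt\mu\}$, and concludes. What the paper silently glosses over, and what your invocation of adaptedness is really doing, is ensuring that this domain has full $Q$-measure, so that $G$ is $Q$-a.e.\ continuous as a function on the ambient space and Lemma \ref{lem:weak-convergence} applies. This is the right thing to worry about, and you were right to flag it; the trade-off is that your argument only verifies the lemma for adapted $Q$ with Lebesgue intensity rather than for arbitrary $Q\in\cP(\MM^\Diamond\times B_1)$ as stated. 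In the one place the lemma is used (the proof of Theorem \ref{thm:FDs-are-closed-2}, where $Q$ is the $\Diamond$-restriction of the extended version of a CPD) your hypotheses hold, so there is no gap for the application — but it is worth realizing that the key continuity is a property of the inner integral alone and does not require any $Q$-dependent machinery.
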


\begin{proof}
Let $f\in C(\MM^\square)$ and $Q\in\PP(\MM^\Diamond\times B_1)$. By the definition of centering and Fubini,
\[
\int f\, \mathrm{d}\left(\cent(Q)^\square\right) =  \iint_0^{\log 2} f(S_t^\square T_x^*\mu) \,\mathrm{d}\lambda(t) \, \mathrm{d}Q(\mu,x).
\]
Even though each $S_t^\square$ may be discontinuous (when the boundary of $B(0,e^{-t})$ has positive mass), for a given measure there can be discontinuities only for countably many values of $t$. It follows from the bounded convergence theorem that the inner integral is a continuous function of $T_x^*\mu$, which in turn is a continuous function of $(\mu,x)$, and the lemma follows.
\end{proof}

\section{Splicing and the simplex of fractal distributions}
\label{sec:Poulsen}

The goal of this section is to establish Theorem \ref{thm:Poulsen}, which is precisely stated as follows.

\begin{theorem} \label{thm:Poulsen-2}
 The convex set $\FD$ is a Poulsen simplex (as a subset of the locally convex space of finite Radon measures). In other words, extremal points of $\FD$ are weakly dense in $\FD$.
\end{theorem}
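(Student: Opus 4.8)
The plan is to prove that $\FD$ is a Poulsen simplex by transferring the question to $\CPD$ via the centering correspondence (Theorem~\ref{thm:equivalence-CP-FD}), and then exhibiting an explicit dense family of ergodic CP distributions built from random self-similar measures. First I would observe that $\FD$ is a Choquet simplex: by Theorem~\ref{thm:FDs-are-closed-2} it is closed (hence compact in the relevant topology), it is convex, its extremal points are exactly the EFDs (by the combination of Theorem~\ref{thm:ergodic-components-of-FDs-are-FDs} and the fact that ergodic measures are extremal in the set of $S^*_t$-invariant measures, exactly as in Lemma~\ref{lem:CPconvex}), and each FD has a unique representation as an integral over its ergodic decomposition; uniqueness of the ergodic decomposition gives the simplex property. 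So the whole content is that $\EFD$ is dense in $\FD$.

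The key step is to reduce to $\CPD$. Since $\cent$ and the extension operator $Q\mapsto\widehat Q$ intertwine the two settings, and since the map $Q\mapsto \cent(\widehat Q)^\square$ is continuous on CP distributions (this was established in the proof of Theorem~\ref{thm:FDs-are-closed-2}, via Lemmas~\ref{lem:restricted-to-extended-is-continuous} and~\ref{lem:centering-is-continuous}) and surjective onto $\FD$ by Theorem~\ref{thm:equivalence-CP-FD}, it suffices to find a dense subset of $\CPD$ consisting of distributions whose image is an EFD. Moreover centering and extension send ergodic objects to ergodic objects (an ergodic CPD gives an ergodic FD — this is part of Hochman's correspondence, or can be seen because the centering is a factor of a suspension-type construction over the ergodic base $(\Xi,M,Q)$, and suspensions of ergodic systems are ergodic). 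So the goal becomes: \emph{every CP distribution can be weakly approximated by ergodic CP distributions of a concrete form.}

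For this I would use the \emph{splicing} construction advertised in the introduction. Given a target CPD $Q$, approximate its measure marginal's statistics by a Markov measure: choose $N$ large and consider the $N$-step CP process, whose distribution on the finite data $\{\mu(D): D\in\DD_N\}$ can be matched arbitrarily well by a distribution that is shift-invariant and ergodic on the symbolic side (e.g.\ a suitable Markov or even Bernoulli measure on the tree of dyadic refinements coming from the transition kernel of $Q$). Concretely, one constructs a random measure by, at each dyadic scale, independently (or with finite memory) choosing how to subdivide mass among the $2^d$ children according to a kernel read off from $Q$; this is a \emph{random $b$-adic self-similar measure}, and its CP distribution is ergodic because the underlying driving process is an i.i.d.\ or finite-state Markov chain, hence ergodic, and the CP distribution is a factor of it. One must check: (i) adaptedness — automatic since the point is always sampled from the measure in this construction; (ii) Lebesgue intensity — ensured by requiring the expected mass split to be uniform, i.e.\ the kernel has the right one-dimensional marginal, which one can always arrange while matching $Q$ on finitely many coordinates; (iii) weak convergence to $Q$ as $N\to\infty$ — here I would invoke Lemma~\ref{lem:easy-convergence}, so that it is enough to match expectations of functions in $\cF_k$ for each fixed $k$, which reduces to matching finitely many joint moments of $\{\mu(D):D\in\DD_k\}$ under the $M$-invariant distribution; the ergodic theorem / stationarity of the CP process expresses these as time-averages that the spliced process reproduces in the limit.

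The main obstacle I anticipate is the simultaneous bookkeeping in step three: one needs the approximating self-similar measures to be genuine CP distributions (invariant, adapted, \emph{and} Lebesgue intensity) \emph{and} ergodic \emph{and} close to $Q$, and these constraints interact. In particular forcing Lebesgue intensity while matching the conditional structure of $Q$ requires care — essentially one matches the normalized transition kernel $D\mapsto (\mu(D')/\mu(D))_{D'\subset D}$ on cylinders of bounded depth and then ``renormalizes'' so that the stationary measure of the resulting chain projects to Lebesgue; showing this renormalization only perturbs the finite-dimensional statistics by a controlled amount, and that ergodicity survives it, is the technical heart. Once the dense family of ergodic CPDs is in hand, applying $\cent(\widehat{\cdot})^\square$ and using its continuity and the fact that it preserves ergodicity immediately yields that $\EFD$ is dense in $\FD$, completing the proof that $\FD$ is a Poulsen simplex (non-triviality being clear since $\FD$ contains, e.g., the distribution of Lebesgue measure and many others, so it is not a single point).
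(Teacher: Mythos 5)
Your overall strategy — reduce to $\CPD$ via the continuity of $Q\mapsto\cent(\widehat{Q})^\square$ (established in the proof of Theorem~\ref{thm:FDs-are-closed-2}), note that this map sends ergodic CPDs to ergodic FDs, and then prove density of $\ECPD$ in $\CPD$ by a splicing construction — is exactly the skeleton the paper uses. The divergence, and the gap, lie in how you propose to prove that density.

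You try to approximate an \emph{arbitrary} CPD $Q$ directly by constructing a finite-memory Markov (or i.i.d.) driving process whose induced CP distribution matches the finite-dimensional statistics of $Q$ and then ``renormalizing'' so that the intensity becomes Lebesgue. You flag this renormalization as the technical heart, and rightly so: it is not established, and it is not obvious that it can be done. The difficulty is that you need the resulting distribution to simultaneously be $M$-invariant, adapted, have Lebesgue intensity, be ergodic, and approximate $Q$ well — and you give no mechanism for forcing Lebesgue intensity that is compatible with the other four requirements. In particular, ``matching the normalized transition kernel on cylinders of bounded depth'' and then tweaking the stationary distribution is not a local perturbation; changing the stationary distribution to push its intensity toward Lebesgue generally changes all the joint marginals $\{\mu(D):D\in\DD_k\}$, so the ``controlled perturbation'' claim needs an actual argument. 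As written, this step is an unproved assertion standing in for the crux of the result.

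The paper sidesteps this entirely with a preliminary Krein--Milman reduction that your proposal is missing. Since $\CPD$ is a compact Choquet simplex (closedness from Lemma~\ref{lem:CP-distributions-are-closed}, convexity and extremality from Lemma~\ref{lem:CPconvex}, uniqueness of ergodic decomposition), the closed convex hull of $\ECPD$ is all of $\CPD$; hence it suffices to approximate a \emph{rational convex combination} $\frac1q\sum_{i=1}^k t_i R_i$ of already-ergodic CPDs $R_1,\dots,R_k$ by a single ergodic CPD. For such a target, the paper splices the measures drawn from the \emph{product} distribution $\widetilde{R}^\N=(\times_i R_i)^\N$ along the $k$-periodic sequence $\n=(Nt_1,\dots,Nt_k)^\infty$ and takes the empirical average $Q^N=\frac1N\sum_j \ol{Q}_j$. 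With this setup Lebesgue intensity is automatic and is verified by a one-line product computation from~(\ref{eq:property-splicing}) using that each $R_i$ already has Lebesgue intensity — there is no renormalization to fight. Ergodicity of $Q^N$ comes for free from the ergodic theorem applied to the shift on $(B_1^k)^\N$ with the Bernoulli measure $\widetilde{R}^\N$, and convergence $Q^N\to\frac1q\sum t_iR_i$ is an elementary counting estimate using Lemma~\ref{lem:easy-convergence}. So the splicing of \emph{ergodic inputs} plays the role you wanted your Markov construction to play, but it makes all three checks — intensity, ergodicity, convergence — essentially computations rather than new constructions. You should insert the Krein--Milman reduction and replace your Markov-approximation scheme with the periodic product splicing; as it stands, your proof has a genuine hole exactly where you said the technical heart was.
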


Again, invoking the centering operation all we need to prove is the following:
\begin{proposition} \label{prop:CPD-Poulsen}
The set $\CPD$ is a Poulsen simplex.
\end{proposition}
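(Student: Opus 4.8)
The plan is to show that $\CPD$ is a non-trivial Choquet simplex and that its extremal points are dense; by Lemma \ref{lem:CPconvex} the extremal points are exactly $\ECPD$, so the task reduces to density of ergodic CP distributions. First I would record that $\CPD$ is compact (Lemma \ref{lem:CP-distributions-are-closed}) and convex (Lemma \ref{lem:CPconvex}), and that it is a Choquet simplex: this follows from the ergodic decomposition together with Proposition \ref{prop:ergodic-CPD}(1), which says that for a CPD $Q$, at $\overline{Q}$-a.e.\ $\mu$ and $\mu$-a.e.\ $x$ the ergodic component $Q_{(\mu,x)}$ is the unique element of $\MD(\mu,x)$; this gives the uniqueness (up to null sets) of the representing measure on extremal points, which is exactly the Choquet-simplex property in the locally convex space $\PP(\Xi)$. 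Non-triviality is immediate since there are at least two distinct EFDs (e.g.\ arising from distinct self-similar measures).

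The heart of the matter is density of $\ECPD$ in $\CPD$. I would fix an arbitrary CPD $Q$ and an $\varepsilon>0$, and using Lemma \ref{lem:easy-convergence} (reduced to the measure marginal via Lemma \ref{lem:measure-convergence-implies-CP-convergence}) it suffices to approximate $Q$ in the finite-resolution functions $\cF_k$ for a single large $k$. The idea is to realize $Q$ as the distribution of the stationary process $(\xi_n)_{n\in\N}$ with $\xi_1\sim Q$, $M\xi_n=\xi_{n+1}$, approximate $Q$ by a distribution supported on finitely many ``cylinders'' of depth $m$ in the dyadic tree, and then use the \emph{splicing} construction (Section \ref{subsec:splicing}) to build a single random self-similar measure (self-similar with respect to the dyadic grid) whose CP distribution is ergodic and $\varepsilon$-close to $Q$. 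Concretely: the stationarity of $(\xi_n)$ gives a shift-invariant measure on the sequence space of depth-$m$ dyadic patterns with the correct transition structure; one approximates this invariant measure by an ergodic one (ergodic measures are dense among shift-invariant measures on a subshift — a classical fact, provable by the standard periodic-approximation / generic-point argument), and then realizes the ergodic approximant as a random self-similar measure obtained by splicing i.i.d.\ copies of a random pattern along blocks of dyadic scales. The resulting distribution is an $M$-invariant, adapted distribution with Lebesgue intensity (adaptedness and Lebesgue intensity are forced by choosing the randomization to respect the uniform measure on $\DD$, as in the construction of $\widehat{Q}$), hence a CPD, and it is ergodic because it is generated by a single random self-similar measure, i.e.\ the scenery is driven by an i.i.d.\ sequence; finally its depth-$k$ statistics are within $\varepsilon$ of those of $Q$ by construction.

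The main obstacle I anticipate is the construction and verification of the splicing step: one must produce a \emph{single} measure (or random measure) whose CP scenery statistics are genuinely ergodic and match a prescribed finite-resolution target, while simultaneously guaranteeing exact $M$-invariance, adaptedness, and Lebesgue intensity. The delicate point is that splicing introduces ``seams'' at the block boundaries between consecutive scales, and one must check that these seams occupy a vanishing density of scales so that they do not affect the limiting distribution; controlling this requires letting the block lengths grow and invoking the ergodic theorem along the i.i.d.\ driving sequence. Ensuring the intensity stays Lebesgue throughout — rather than merely adapted with some other intensity — is exactly the reason the paper insists on $[Q]=\mathcal{L}$ in the definition of CPD, and this will be used repeatedly. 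Once the approximating ergodic CPD is constructed and its closeness to $Q$ verified via Lemmas \ref{lem:easy-convergence} and \ref{lem:measure-convergence-implies-CP-convergence}, the proof concludes.
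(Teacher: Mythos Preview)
Your setup (compactness, convexity, Choquet simplex via Proposition \ref{prop:ergodic-CPD}) matches the paper. The divergence is in how you propose to prove density of $\ECPD$.

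The paper does \emph{not} attack an arbitrary $Q\in\CPD$ directly. Instead it invokes Krein--Milman to reduce to the following concrete task: given ergodic CPDs $R_1,\ldots,R_k$ and a rational probability vector $(t_1/q,\ldots,t_k/q)$, build ergodic CPDs $Q^N$ converging to $\frac1q\sum t_iR_i$. This is then done by splicing the $R_i$ along a $k$-periodic block sequence $\mathbf{n}=(Nt_1,\ldots,Nt_k)^\infty$, pushing forward the product $(\times_i R_i)^\N$ under $\spl_{\mathbf{n}}$, and averaging over the $N$ initial phases to restore $M$-invariance. Ergodicity comes from the Bernoulli structure of the product; Lebesgue intensity is inherited because each $R_i$ already has it (Lemma \ref{lem:splicing-is-ECPD}); convergence is checked on $\cF_p$ via the seam-counting argument you anticipated (Lemma \ref{lem:splicings-converge}).

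Your route---discretize $Q$ to depth $m$, then invoke ``ergodic measures are dense among shift-invariant measures on a subshift''---has a real gap. That classical fact (when it holds, e.g.\ on full shifts or under specification) gives you an ergodic \emph{$M$-invariant} measure close to $Q$, but $\CPD$ is a proper closed convex subset of the $M$-invariant measures, cut out by adaptedness and $[Q]=\mathcal{L}$. Density of extremal points in the ambient simplex does not automatically restrict to a face, so you cannot conclude that the ergodic approximant lies in $\CPD$, and there is no obvious projection back into $\CPD$ that preserves ergodicity. Your splicing paragraph hints at a repair, but without the Krein--Milman reduction you have no ergodic CPDs to splice \emph{from}---you would be splicing depth-$m$ patterns, and then must verify from scratch that the result is adapted with Lebesgue intensity, which is exactly the work the paper avoids by starting from ergodic components of $Q$ (Proposition \ref{prop:ergodic-CPD}(2)). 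The fix is simple: replace the abstract density appeal by the Krein--Milman step, so that your building blocks are already ergodic CPDs.
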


We show how to deduce Theorem \ref{thm:Poulsen-2} from this proposition; the remainder of this section is devoted to the proof of the proposition.

\begin{proof}[Proof of Theorem  \ref{thm:Poulsen-2} (Assuming Proposition \ref{prop:CPD-Poulsen})]
By Theorem \ref{thm:equivalence-CP-FD}, the centering operation maps CP distributions onto fractal distributions. The centering map is in fact a factor map from the suspension flow with base $(\MM\times B_1,M)$ and height $\log 2$ onto $\FD$; see \cite{Hochman2010} for the details. It follows that if $Q$ is an ergodic extended CPD, then its centering is an ergodic FD. Moreover, if a CPD is ergodic then so is its extended version. Indeed, the construction in Lemma \ref{lem:restricted-to-extended-CP} shows that the extended version is a factor of the two-sided extension of $Q$, which is ergodic if and only if $Q$ is ergodic.

Finally, we recall from Lemmas \ref{lem:restricted-to-extended-is-continuous} and \ref{lem:centering-is-continuous} that the map $Q\to \cent(\widehat{Q})^\square$ is continuous. Thus the image of the dense set of ergodic CPDs under this map is dense in the set of restricted FDs and, by the above observations, consists of ergodic distributions, concluding the proof.
\end{proof}

\subsection{The splicing operation}
\label{subsec:splicing}
The construction of the ergodic CPD which approximates a given CPD will be done via an operation which we term the \emph{splicing} of scales. To introduce the notation, it will be convenient to identify points in $B_1$ with dyadic sequences:

\begin{notation}[Coding dyadic cubes]\label{coding} Write $\cA = \{0,1,\dots,2^d - 1\}$. Enumerate $\mathcal{D}$ (the dyadic sub-cubes of $B=B_1$ of first level, recall Section \ref{subsec:CP}) as $\{ B_i : i \in \cA\}$. Each $x \in \cA^k$ then corresponds to a dyadic cube $B_{x}$ of generation $k$ and side-length $2^{1-k}$. We will silently identify $x$ with $B_{x}$ whenever there is no possibility of confusion. Moreover, we will also identify each $x \in B$ with $x \in \cA^\N$ such that
\[
\{x \}= \bigcap_{k \in \N}B_{x|_k}.
\]
If $x$ is any sequence of length $\ge b$ (possibly infinite), we will write $x_a^b = (x_{a+1}, x_{a+2},\ldots, x_b)$. Also, if $x$ if an infinite word, we write $x_n^\infty = (x_{n+1},x_{n+2},\ldots)$; geometrically, $x_n^\infty = T_{B_{x_0^n}}x$. In the case $a=0$, we also write $x|_b=x_0^b$. We allow the empty word $\varnothing$, which in our identification corresponds to $B_1$;  we note $x_a^a=\varnothing$. If $\mu\in\PP(B)$ and $x\in\cA^k$, we will write
$$\mu(\cdot|x)=T_{B_x}\mu_{B_x}.$$
In symbolic notation, if $y\in\cA^\ell$, then $\mu(y|x)= \mu(xy)/\mu(x)$.
\end{notation}

We can now give the definition of the splicing map.

\begin{definition}[Splicing map] Given a sequence $\mathbf{n}=(n_i)_{i\in\N}$ of integers, we define $\spl_{\mathbf{n}}\colon B_1^\N\to B_1$ as
\[
 \spl_{\mathbf{n}}((x^i)_{i\in \N}) := (x^1|_{n_1}x^2|_{n_2}\cdots),
\]
where the notation on the right-hand side indicates concatenation of words.
\end{definition}

\begin{figure}[t!]
\label{fig:splicing}
\includegraphics[scale=0.8]{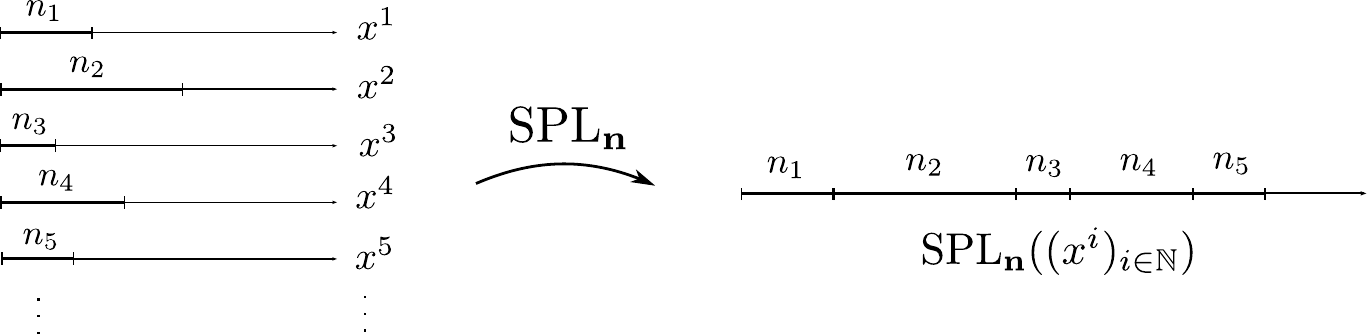}
\caption{Given $\n = (n_i)_{i \in \N}$, the splicing map $\spl_\n$ takes the first $n_i$ coordinates of the word $x^i \in B_1$ and concatenates the obtained finite words into a point in $B_1$.}
\end{figure}

Of particular interest to us will be the action of the splicing map on product measures $\times_{i=1}^\infty \mu^i$ on $B_1^\N$, and also on product distributions. Let
\[
\nu=\spl_{\mathbf{n}}(\times_{i=1}^\infty \mu^i).
\]
By the definition of splicing, the $\nu$-mass of a finite word $y$ (or equivalently, a dyadic cube $D$) is built from the $\mu_i$-masses of consecutive sub-words of $y$ whose length comes from the sequence $\n$. To make this statement precise, for $k \in \N$, denote the sum
\[
S_k = S_k(\n) := n_1+n_2+\cdots+n_k.
\]
\begin{lemma}
\label{lem:simplersplicing}
If $k \in \N$, $S_k\le N \le S_{k+1}$ and $y\in\cA^N$, then
\begin{equation} \label{eq:property-splicing}
 \nu(y) = \mu^1(y_0^{S_1})\mu^2(y_{S_1}^{S_2})\cdots \mu^{k-1}(y_{S_{k-1}}^{S_k}) \mu^k(y_{S_k}^N).
\end{equation}
\end{lemma}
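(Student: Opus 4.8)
The plan is to prove this by induction on $k$, the index that tells us how many complete blocks of scales $n_1,\dots,n_k$ have been consumed by the word $y$. The base case $k=1$, covering $0 = S_0 \le N \le S_1 = n_1$, is essentially the definition of $\spl_{\mathbf n}$: for a word $y$ of length $N \le n_1$, the cylinder $[y]$ in $B_1$ is the preimage under $\spl_{\mathbf n}$ of $\{(x^i)_i : x^1|_N = y\}$, which under the product measure $\times_i \mu^i$ has mass $\mu^1(y)$, independent of the remaining coordinates. One should be slightly careful here because the definition of $\spl_{\mathbf n}$ produces an \emph{infinite} word only when $\mathbf n$ has infinitely many nonzero entries and the $x^i$ are suitably generic, but since we identify points of $B_1$ with elements of $\cA^\N$ and $\mu^i(B_1)=1$, the pushforward is well-defined as a Borel measure on $B_1$ and the cylinder computation goes through.

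For the inductive step, suppose the formula holds for $k$ and consider $N$ with $S_k \le N \le S_{k+1}$. The key observation is the following self-similarity of the splicing construction: if $z = x^1|_{n_1}\cdots x^k|_{n_k}$ is the concatenation of the first $k$ truncated words, which has length $S_k$, then $\spl_{\mathbf n}((x^i)_{i\in\N})$ restricted to coordinates beyond $S_k$ is exactly $\spl_{\mathbf n'}((x^{i+k})_{i\in\N})$ where $\mathbf n' = (n_{k+1}, n_{k+2},\dots)$ is the shifted sequence. Concretely, writing $\nu' = \spl_{\mathbf n'}(\times_{i=k+1}^\infty \mu^i)$, one has the disintegration identity
\[
\nu(zw) = \nu(z)\,\nu'(w)
\]
for any finite word $z$ of length exactly $S_k$ and any finite word $w$; this is because under the product measure, the coordinates $(x^1,\dots,x^k)$ (which determine the prefix $z$ of length $S_k$) are independent of $(x^{k+1},x^{k+2},\dots)$ (which determine the suffix via $\spl_{\mathbf n'}$). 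Now given $y \in \cA^N$ with $S_k \le N \le S_{k+1}$, write $y = zw$ with $z = y_0^{S_k}$ (length $S_k$) and $w = y_{S_k}^N$ (length $N - S_k \le n_{k+1} = S_1(\mathbf n')$). Then $\nu(y) = \nu(z)\,\nu'(w)$; by the base case applied to $\mathbf n'$ and $\mu^{k+1}$ we get $\nu'(w) = \mu^{k+1}(y_{S_k}^N)$, and by the inductive hypothesis applied to $z$ (a word of length exactly $S_k$, the boundary case) we get $\nu(z) = \mu^1(y_0^{S_1})\cdots\mu^k(y_{S_{k-1}}^{S_k})$. Multiplying these gives the claimed formula for index $k+1$ (after relabeling): $\nu(y) = \mu^1(y_0^{S_1})\cdots\mu^k(y_{S_{k-1}}^{S_k})\mu^{k+1}(y_{S_k}^N)$.

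The only genuinely delicate point is the disintegration identity $\nu(zw) = \nu(z)\nu'(w)$ for $z$ of length exactly $S_k$, and more generally making sure the "prefix of length $S_k$ is a measurable function of $(x^1,\dots,x^k)$ alone while the rest depends only on the remaining coordinates" statement is airtight — this is where the product structure is really used, and one must check it respects the identification of symbolic sequences with points of $B_1$ on the set of full measure (cubes have negligible boundary for these measures, or one simply works on the symbolic side throughout). I expect this bookkeeping with the splitting indices $S_{k-1}, S_k, N$ and the empty-word conventions (e.g.\ when $N = S_k$ the last factor $\mu^k(y_{S_k}^N) = \mu^k(\varnothing) = 1$, consistently with either indexing) to be the main place where care is needed, but it is routine. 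An alternative, perhaps cleaner, route avoiding an explicit induction is to note that $\spl_{\mathbf n}$ is, by construction, compatible with the natural maps between cylinder sets, so both sides of \eqref{eq:property-splicing} define the same content on each dyadic level and agree because they agree on generators; but writing this out carefully amounts to the same induction, so I would present the inductive proof above.
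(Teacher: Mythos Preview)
Your induction is correct and amounts to the same argument as the paper's: both factor $\nu(y)$ block by block using the product structure of $\times_i \mu^i$, you via an explicit induction on $k$ with the shifted sequence $\mathbf n'$, the paper by first recording the conditional identity $\nu(\cdot\,|x)=\mu^{k+1}(\cdot\,|x_{S_k}^{S_k+i})$ for $x$ of length $S_k+i$ with $0\le i\le n_{k+1}$ and then iterating it. The only practical difference is that the paper isolates this conditional formula as a separate labeled equation because it is invoked again later in the section.
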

\begin{proof}
If $x\in\cA^{S_k+i}$ and $y\in\cA^j$, with $0\le i\le i+j\le n_{k+1}$, then
\begin{equation} \label{eq:local-splicing}
\nu(y|x) = \mu^k(y|x_{S_k}^{S_k+i}).
\end{equation}
In particular, if $x\in\cA^{S_k}$ and $y\in\cA^j$ with $0\le j\le n_{k+1}$, then
$\nu(y|x) = \mu^k(y)$.
By iterating (\ref{eq:local-splicing}), if $S_k\le N \le S_{k+1}$ and $y\in\cA^N$, then
$$\nu(y) = \mu^1(y_0^{S_1})\mu^2(y_{S_1}^{S_2})\cdots \mu^{k-1}(y_{S_{k-1}}^{S_k}) \mu^k(y_{S_k}^N)$$
as claimed.
\end{proof}

Relying on this lemma, by choosing a suitable sequence $\n$, we can now control the frequency of occurrences of the measures $\mu^i$ in the CP scenery $\la \nu,x \ra_{N}$.

\subsection{Proof of Proposition \ref{prop:CPD-Poulsen}}

We are now ready to establish Proposition \ref{prop:CPD-Poulsen}. We know from Lemma \ref{lem:CP-distributions-are-closed} that $\CPD$ is compact. Moreover, by Lemma \ref{lem:CPconvex} and the existence and uniqueness of the ergodic decomposition, $\CPD$ is a Choquet simplex. Thus for Proposition \ref{prop:CPD-Poulsen} we only need to show that $\ECPD$ is dense in $\CPD$:

\begin{proposition}\label{prop:denseECPD}
Ergodic CPDs are dense in $\CPD$.
\end{proposition}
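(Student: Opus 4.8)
The plan is to show that an arbitrary CPD $Q$ can be weakly approximated by ergodic CPDs. The idea is to decompose $Q$ into its ergodic components, pick finitely many of them that together approximate $Q$, and then splice together (in carefully chosen proportions) the associated self-similar measures so that the resulting measure CP-generates a single ergodic CPD close to $Q$. Concretely, I would first use the ergodic decomposition $Q = \int Q_\alpha\,\mathrm{d}Q(\alpha)$ (with almost all $Q_\alpha\in\ECPD$ by Proposition \ref{prop:ergodic-CPD}) together with Lemma \ref{lem:easy-convergence} to reduce matters to matching integrals of finitely many functions in $\bigcup_k\cF_k$: given $\eps>0$ and a finite family $f_1,\dots,f_m$, choose finitely many ergodic components $Q_1,\dots,Q_r$ and weights $p_1,\dots,p_r\ge 0$ summing to $1$ with $\big|\int f_j\,\mathrm{d}Q - \sum_l p_l\int f_j\,\mathrm{d}Q_l\big|<\eps$ for all $j$.

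Next, for each $l$ I would produce a concrete measure realizing $Q_l$: by adaptedness, sampling $(\mu,x)\sim Q_l$ and looking at the CP scenery along $x$ recovers $Q_l$ for $Q_l$-a.e.\ $(\mu,x)$, so fix such a $\mu^l\in\PP(B_1)$. Now I splice: using $\spl_\n$ applied to the product measure $\times_i \mu^{c(i)}$, where the ``colouring'' $c\colon\N\to\{1,\dots,r\}$ and the block lengths $n_i$ are chosen so that (i) $n_i\to\infty$ but $n_i/S_i\to 0$, so that the block boundaries become negligible, and (ii) along $S_k\to\infty$ the colour $l$ is used with asymptotic frequency exactly $p_l$. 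By Lemma \ref{lem:simplersplicing}, the measure $\nu$ restricted to a deep dyadic cube looks, on most scales, like a long stretch of $\mu^l$-scenery for the appropriate $l$; combining this with the fact that each $\mu^l$ CP-generates $Q_l$, one gets that for $\nu$-a.e.\ $x$ the CP scenery distributions $\langle\nu,x\rangle_N$ converge to the \emph{fixed} distribution $\sum_l p_l Q_l$. This uses the restriction principle (Proposition \ref{prop:MD-restriction}) to pass from $\mu^l$-typical points to $\nu$-typical points inside the relevant sub-cubes, plus a Cesàro/averaging argument to handle the transition blocks, whose total density is $0$.

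Having a measure $\nu$ that CP-generates the convex combination $Q_0 := \sum_l p_l Q_l$, I would then invoke Theorem \ref{thm:micro-distris-are-CPDs}: after a harmless Lebesgue-generic translation (which does not change the limiting distribution except to guarantee Lebesgue intensity — and in fact one can arrange the $\mu^l$ to already have nice boundary behaviour) $\nu$ generates a CPD, so $Q_0\in\CPD$. But I want an \emph{ergodic} CPD. Here I would use that the splicing, if the colouring $c$ is itself chosen to be a typical realisation of a Bernoulli (or any mixing) process with weights $p_l$, makes the generated distribution ergodic: the orbit closure / shift on the colour sequence is mixing, so the suspension-type system underlying $\langle\nu,x\rangle_N$ has no non-trivial invariant sets. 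Thus $Q_0$ is actually an EFD-analogue, i.e.\ an ECPD, and by Lemma \ref{lem:easy-convergence} and the estimate from the first paragraph, $Q_0$ is within $O(\eps)$ of $Q$ against all test functions $f_j$. Letting $\eps\to 0$ and enlarging the finite families gives density of $\ECPD$ in $\CPD$.

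\textbf{Main obstacle.} The delicate point is verifying that the spliced measure $\nu$ genuinely CP-\emph{generates} a single distribution at a.e.\ point (not merely that the target lies in $\MD(\nu,x)$): one must control the CP scenery $\langle\nu,x\rangle_N$ \emph{uniformly along all $N$}, not just along the scales $S_k$ where a block ends, and show the contributions of the ``boundary'' scales near each $S_k$ (where $\nu$ does not look like any single $\mu^l$) have vanishing density — this is where the growth condition $n_i/S_i\to 0$ is essential — and simultaneously ensure the whole construction produces an \emph{ergodic} limit. Arranging the frequencies $p_l$ to be matched exactly while keeping ergodicity (rather than getting the non-ergodic average of the $Q_l$) is the real subtlety, and is presumably handled by choosing the colour sequence to be a generic point of an ergodic process and appealing to the ergodic theorem along that sequence.
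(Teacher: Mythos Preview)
There is a genuine gap. Your two central claims are incompatible: you argue (correctly, given your growth condition $n_i\to\infty$, $n_i/S_i\to 0$) that the spliced measure $\nu$ CP-generates exactly $Q_0=\sum_l p_l Q_l$; and then you assert that $Q_0$ is ergodic because the colouring was chosen as a Bernoulli-typical sequence. But $Q_0$ is a fixed distribution determined entirely by the $Q_l$ and the weights $p_l$, and as a nontrivial convex combination of distinct ergodic CPDs it is \emph{never} ergodic. How you chose the colour sequence in the splicing cannot change what $Q_0$ is once you have already shown $\nu$ generates it. The ``main obstacle'' you flag is precisely the point where the argument fails: matching the frequencies $p_l$ exactly while keeping ergodicity is not ``handled'' by a Bernoulli colouring---with growing blocks the transition scales become negligible and you land exactly on the non-ergodic mixture.

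The paper's fix is to abandon the requirement that a single spliced measure generate the target exactly. Instead it uses a \emph{fixed} block-length parameter $N$ and the $k$-periodic sequence $\mathbf{n}=(Nt_1,\ldots,Nt_k)^\infty$, and it splices at the level of \emph{distributions} (taking $\overline{P}=\spl(\widetilde{R}^\N)$ with $\widetilde{R}=\times_i R_i$) rather than single measures. The resulting adapted distribution is then averaged over the $N$ possible phases to produce a CPD $Q^N$. Because the block structure is periodic with a fixed period, the underlying system is a factor of a Bernoulli shift on $(B_1^k)^\N$, and the ergodic theorem for that shift yields ergodicity of $Q^N$ (this is Lemma~\ref{lem:splicing-is-ECPD}). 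Crucially, for each fixed $N$ the distribution $Q^N$ is \emph{not} equal to $\tfrac1q\sum_i t_i R_i$---the boundary effects between blocks contribute nontrivially---but as $N\to\infty$ these effects have vanishing density and $Q^N\to\tfrac1q\sum_i t_i R_i$ (Lemma~\ref{lem:splicings-converge}). So the approximating ergodic CPDs come from a genuine one-parameter family, and density follows via Krein--Milman. Your intuition that splicing is the right mechanism is correct, but the implementation must keep block lengths bounded (for ergodicity) and then let the bound tend to infinity (for convergence), rather than letting $n_i\to\infty$ inside a single construction.
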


\begin{proof}
The density of ergodic CPDs is implied by the Krein-Milman Theorem if we are able to prove that, given a rational probability vector $(t_1/q,\ldots, t_k/q)$, and given ergodic CPDs $R_1,\ldots,R_k$, there is a sequence of ergodic CPDs $Q^N$ converging to $\frac1q \sum_{i=1}^k t_i R_i$ as $N \to \infty$.

To find the sequence $Q^N$, let $\spl=\spl_{\mathbf{n}}$ be the splicing map corresponding to the following $k$-periodic sequence
\[
 \mathbf{n} = (N t_1, Nt_2,\ldots, N t_k )^\infty = (N t_1, \ldots, N t_k, N t_1,\ldots, N t_k,\ldots).
\]
Note that, as $t_i/q$ is rational, $Nt_i$ is an integer for all $i \in \{ 1,\dots,k \}$. Write $\widetilde{R}=\times_{i=1}^k R_i$ and define an adapted distribution $P=P^N$ by setting
$$\ol{P}=\spl(\widetilde R^\N),$$
that is, we take the distributions $R_i$ in the product $k$-periodically. Then define $Q^N$ by setting the measure marginal
$$\ol{Q}^N=\frac1N \sum_{j=0}^{N-1} \ol{Q}_j,$$
where $\ol{Q}_j$ is the push-forward of $(\Xi,P)$ under the map $(\mu,x)\to \mu(\cdot|x_0^j)$. Then $Q^N$ is an ergodic CPD with $Q^N \to \frac{1}{q}\sum_{i=1}^k t_i R_i$ as $N \to \infty$. These two facts are verified in the Lemma \ref{lem:splicing-is-ECPD} and Lemma \ref{lem:splicings-converge} below.
\end{proof}

\begin{lemma} \label{lem:splicing-is-ECPD}
For fixed $N$, the distribution $Q=Q^N$ is an ergodic CPD.
\end{lemma}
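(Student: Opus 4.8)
The plan is to verify the three defining properties of a CP distribution for $Q=Q^N$ (namely $M$-invariance, adaptedness, Lebesgue intensity) and then, separately, its ergodicity. The measure $\bar P$ is by construction $\spl(\widetilde R^{\N})$, a deterministic splicing of the product of the ergodic CPDs $R_i$ with block lengths $Nt_1,\dots,Nt_k$ repeated periodically. The key structural fact is Lemma \ref{lem:simplersplicing}: the conditional measures of a spliced measure on a dyadic cube of generation $j$ are governed by the $R_i$ whose block contains the ``current'' scale, and within a single block of length $Nt_i$ the conditionals $\mu(\cdot\mid x_0^j)$ are distributed exactly like samples from $\bar R_i$ pushed through the $M$-dynamics. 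This is what makes the averaged marginal $\bar Q^N=\frac1N\sum_{j=0}^{N-1}\bar Q_j$ stationary under the dyadic zooming.

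First I would set up the symbolic model: realize $(\Xi,P)$ as a pair $(\mu,x)$ with $\mu=\spl(\bar R_1,\bar R_2,\dots)$ built from an i.i.d.-in-blocks sequence of samples and $x$ chosen according to $\mu$ — this is automatically adapted by construction of $\bar P=\spl(\widetilde R^{\N})$ together with the adaptedness of the $R_i$. Then I would check that $\bar Q^N$, being a Ces\`aro average over $j=0,\dots,N-1$ of the zoom-ins $\bar Q_j$, is $M^\square$-invariant: applying $M$ to $\bar Q_j$ gives $\bar Q_{j+1}$ for $j\le N-2$, and the point is that $\bar Q_N$ agrees with $\bar Q_0$ in distribution because the block structure of $\n$ has period $N t_1+\dots+Nt_k$ — wait, more carefully, because advancing by $N$ scales moves past exactly the first block (of length $Nt_1$) and the periodicity/stationarity of the underlying product lets the shifted configuration be reabsorbed; here Lemma \ref{lem:simplersplicing} is used to identify the conditional law after $N$ steps with the original. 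Adaptedness of $Q^N$ follows since each $\bar Q_j$ is a push-forward of an adapted distribution under a map of the form $(\mu,x)\mapsto(\mu(\cdot\mid x_0^j),x_j^\infty)$ and adaptedness is preserved; equivalently one invokes the interpretation ``sample $\mu$ then sample $x\sim\mu$''. The Lebesgue intensity of $Q^N$ reduces to $[\,\bar Q_j\,]=\mathcal L$ for each $j$, which holds because each $R_i$ has Lebesgue intensity and the splicing of Lebesgue-intensity measures again has Lebesgue intensity (by Lemma \ref{lem:simplersplicing}, the $\nu$-mass of a cube of generation $N$ is a product of $R_i$-masses of the corresponding sub-words, and taking expectations each factor is $2^{-d\cdot(\text{length})}$, giving $2^{-dN}$).

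The genuinely delicate part is \textbf{ergodicity}. The natural approach is to exhibit $(\Xi,M,Q^N)$ as (isomorphic to) a suspension-type or skew-product system over a finite cyclic base of period $N$ — the base coordinate records $j\bmod N$, i.e.\ which position inside the current length-$N$ window we sit at — with fiber dynamics driven by the shift on the i.i.d.-in-blocks sequence of $R_i$-samples, each of which is ergodic. Concretely, I would show that $M$ acts on the model as: increment $j$, and when a block boundary is crossed, shift to the next independent sample; the Ces\`aro averaging over $j$ is exactly what produces an $M$-invariant measure from this cyclically-driven system, and a standard argument (an $M$-invariant set, intersected with the $\{j=0\}$ section, is invariant under the return map, which is a direct product of the ergodic systems associated to $R_1,\dots,R_k$, hence ergodic, so the section-set is trivial, hence so is the original set) gives ergodicity. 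The main obstacle I anticipate is bookkeeping: making the identification of $M^N$-restricted-to-$\{j=0\}$ with an honest product of the ergodic systems $(R_i)$ precise, handling the measure-zero set of dyadic-boundary points where $M$ is discontinuous (harmless since $[Q^N]=\mathcal L$ charges no such point), and confirming that ``stationary one-sided extends to stationary two-sided'' issues from Lemma \ref{lem:restricted-to-extended-CP} do not interfere. I would structure the write-up as: (i) the symbolic model and adaptedness; (ii) Lebesgue intensity via Lemma \ref{lem:simplersplicing}; (iii) $M$-invariance of $\bar Q^N$ via the telescoping $M\bar Q_j=\bar Q_{j+1}$ plus the period-$N$ reabsorption; (iv) ergodicity via the return-map-to-a-section argument.
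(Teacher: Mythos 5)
Your route is genuinely different from the paper's, and unfortunately it has a real gap in the ergodicity step.

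The paper sidesteps direct verification of $M$-invariance and adaptedness entirely. It computes $[Q^N]=\mathcal L$ directly, then proves (via the pointwise ergodic theorem applied to the full shift on $(B_1^k)^\N$ with the i.i.d.\ product measure $\widetilde R^\N$, which is ergodic by Kolmogorov's zero--one law) that $\langle\mu,x\rangle_n\to Q^N$ for $P$-a.e.\ $(\mu,x)$. Theorem~\ref{thm:micro-distris-are-CPDs}(2) then upgrades $Q^N$ to a CPD using only the Lebesgue-intensity computation, and Proposition~\ref{prop:ergodic-CPD} gives ergodicity: since the CP scenery of a $P$-typical (hence $Q^N$-positive-measure) pair $(\mu,x)$ converges to $Q^N$ itself, $Q^N$ must coincide with one of its own ergodic components. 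Observe that ergodicity of the $R_i$ is never used inside this lemma --- it is only needed in Proposition~\ref{prop:denseECPD} to ensure, via Krein--Milman, that it suffices to approximate rational convex combinations of ergodic CPDs.

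The gap in your version is in the ergodicity argument. You claim that the return map to the $\{j=0\}$ section is ``a direct product of the ergodic systems associated to $R_1,\dots,R_k$, hence ergodic.'' This reasoning is invalid twice over: first, a direct product of ergodic systems need not be ergodic (the product of an irrational rotation with itself is the standard counterexample), so even if the return map were such a product the conclusion would not follow; second, and more to the point, the ergodicity of the $R_i$ as CP systems is irrelevant here. The correct fact is that the return map is (conjugate to) the \emph{shift} on $(B_1^k)^\N$ equipped with the i.i.d.\ product measure $\widetilde R^\N$, together with the fiber coordinate carrying the adapted point, and this process is a stationary i.i.d.\ sequence, hence ergodic by Kolmogorov's zero--one law --- a statement that holds whether or not the $R_i$ themselves are ergodic. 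You would need to replace your ergodicity step with this argument (and still carry out the bookkeeping you anticipate). Secondary, and fixable: your explanation of why $\bar Q_N=\bar Q_0$ (``advancing by $N$ scales moves past exactly the first block of length $Nt_1$'') misidentifies the period. The reabsorption happens after a \emph{full} cycle of blocks, i.e.\ after all $k$ block lengths are exhausted, and it is the stationarity of the i.i.d.\ product $\widetilde R^\N$ under the shift, combined with adaptedness, that gives $M^{\text{period}}P=P$. You half-notice this in the ``wait, more carefully'' aside but never resolve it. Your adaptedness and Lebesgue-intensity verifications are fine and the latter matches the paper's.
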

\begin{proof}
The outline of the proof is simple: we show that $Q$ has Lebesgue intensity by definition, and then that $\MD(\mu,x)=\{Q\}$ for $Q$ almost all $\mu$ and $\mu$ almost all $x$. Then it follows from Theorem \ref{thm:micro-distris-are-CPDs} that $Q$ is a CPD (here we need to know that $Q$ has Lebesgue intensity), and then from Proposition \ref{prop:ergodic-CPD} that $Q$ is ergodic. We proceed to the details.

We start by showing that $P$ has Lebesgue intensity. Since each $R_i$ has Lebesgue intensity, using (\ref{eq:property-splicing}) and the $k$-periodicity of the sequence $\n$, we find that if $K=S_{\ell k}$ for some $\ell\in\N$, then
\begin{eqnarray*}
[P](x_0^K) = \int \nu(x_0^K)\,\mathrm{d}\ol{P}(\nu) &=& \prod_{j=1}^{\ell} \prod_{i = 1}^k \int \mu(x_{S_{j+i-1}}^{S_{j+i}}) \,\mathrm{d} \ol{Q}_{j+i}(\mu) \\
&=& \prod_{j=1}^{\ell} \prod_{i = 1}^k \int \mu(x_{S_{i-1}}^{S_i}) \,\mathrm{d} \ol{R}_i(\mu) \\
&=& \Big( \prod_{i = 1}^k 2^{-d N t_i}\Big)^\ell =  2^{-dK}.
\end{eqnarray*}
Since cubes of the form $x_0^{K}$ generate the Borel $\sigma$-algebra, $P$ has Lebesgue intensity as claimed. Now
\begin{eqnarray*}
[Q_j](x_0^\ell) =  \int \mu(x_0^\ell) \, \mathrm{d} \ol{Q}_j(\mu)&=& \int \mu(x_0^\ell|y_0^j) \,\mathrm{d}\mu(y) \, \mathrm{d}\ol{P}(\mu)\\
&=&  \int\frac{\mu((y_0^j)(x_0^{\ell}))}{\mu(y_0^j)} \, \mathrm{d}\mu(y)\, \mathrm{d}\ol{P}(\mu)\\
&=&  \sum_{z\in\mathcal{A}^j} \int \mu((zx)_0^{j+\ell}) \, \mathrm{d}\ol{P}(\mu)= \mathcal{L}(x_0^\ell),
\end{eqnarray*}
showing that $Q_j$ and therefore the average $Q$ also has Lebesgue intensity.

Next, we claim that
\begin{equation} \label{eq:P-empirical-measure}
\lim_{L\to\infty} \frac1L \sum_{i=0}^{L-1} \delta_{\mu(\cdot|x_0^{i N})}  = \overline{P}\quad\textrm{for }P\textrm{ almost all }(\mu,x).
\end{equation}
In essence this is a consequence of the ergodic theorem for product measures under the shift. After re-indexing, the splicing map $\spl \colon B_1^\N \to \N$ induces a map $\spl \colon (B_1^k)^\N \to B_1$ on the space of $k$-tuples $(B_1^k)^\N$ by
\[
\spl(\eta) = (\Phi(\eta_1)\Phi(\eta_2)\cdots), \quad \eta = (\eta_1,\eta_2,\dots) \in (B_1^k)^\N,
\]
where for a given $\zeta = (x^1,\ldots, x^k)\in B_1^k$, we define
\[
\Phi(\zeta)=(x^1|_{N t_1}\cdots x^k|_{N t_k}).
\]
Using this description, we can reformulate (\ref{eq:P-empirical-measure}) as
\[
\lim_{L\to\infty} \frac1L \sum_{i=0}^{L-1} \delta_{\spl(\sigma^i \eta)} = \spl\big(\widetilde{R}^\N\big)\quad\textrm{for }\widetilde{R}^\N\textrm{ almost all }\eta,
\]
where $\sigma$ is the shift on the sequence space $(B_1^k)^\N$. But this is a consequence of the ergodic theorem applied to the ergodic system $\big((B_1^k)^\N, \widetilde{R}^\N\big)$. Indeed, we need to show that for any $f\in C(B_1)$ and $\widetilde{R}^\N\textrm{ almost all }\eta$,
\[
\lim_{L\to\infty} \frac1L \sum_{i=0}^{L-1} f(\spl(\sigma^i \eta)) = \int f\,\mathrm{d}\spl\big(\widetilde{R}^\N\big).
\]
In turn, it is enough to verify this for $f$ in a countable dense subset of $C(B_1)$, and hence for a fixed $f\in C(B_1)$. But this holds by the ergodic theorem applied to the function $f\circ \spl$.

Next, we claim that
\begin{equation} \label{eq:convergence-to-Qj}
\lim_{L\to\infty} \frac1L \sum_{i=0}^{L-1} \delta_{\mu(\cdot|x_0^{i N+j})}  = \ol{Q}_j\quad\textrm{for }P\textrm{ almost all }(\mu,x).
\end{equation}
We start by noting that
\[
\mu(\cdot|x_0^{iN+j}) = \mu(\cdot|x_0^{iN})(\cdot|x_{iN}^{iN+j}).
\]
(The notation on the right-hand side means $\nu(\cdot|x_{iN}^{iN+j})$ where $\nu=\mu(\cdot|x_0^{iN})$.) Indeed, it is straightforward to check the equality for cubes $[z]$ which form a basis of the $\sigma$-algebra.

Given $\eta\in (B_1^k)^\N$, let
\[
 \Psi(\eta) = \spl(\eta)(\cdot|\Phi(\eta_1)_0^j).
\]
Using our previous notation, the last observation, and the definitions of $\overline{P}$ and $\overline{Q}_j$, we find that (\ref{eq:convergence-to-Qj}) is equivalent to
\[
\lim_{L\to\infty} \frac1L \sum_{i=0}^{L-1} \delta_{\Psi(\sigma^i \eta)} = \Psi\big(\widetilde{R}^\N\big)\quad\textrm{for }\widetilde{R}^\N\textrm{ almost all }\eta.
\]
Just as before, this follows from the ergodic theorem. Averaging over $j$, we conclude that
\[
\lim_{L\to\infty} \frac1L \sum_{i=0}^{L-1} \delta_{\mu(\cdot|x_0^i)}  = \ol{Q}\quad\textrm{for }P\textrm{ almost all }(\mu,x).
\]
Now from Theorem \ref{thm:micro-distris-are-CPDs}(2) we deduce that $Q$ is a CPD. Since a full $P$-measure set has positive $Q$-measure, the second part of Proposition \ref{prop:ergodic-CPD} shows that $Q$ is ergodic, finishing the proof.
\end{proof}

\begin{lemma} \label{lem:splicings-converge} It holds that
\[
\lim_{N\to\infty} Q^N = \frac{1}{q}\sum_{i=1}^k t_i R_i .
\]
\end{lemma}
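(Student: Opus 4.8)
The plan is to pass to the CP side and reduce the whole statement to a computation at a fixed finite resolution. By Lemma~\ref{lem:splicing-is-ECPD} each $Q^N$ is a CPD, and so is $\frac1q\sum_{i=1}^k t_iR_i$, being a convex combination of CPDs (Lemma~\ref{lem:CPconvex}); in particular all of these distributions are adapted, hence are determined by their measure marginals via the disintegration~(\ref{eq:adapted}). Moreover the assignment $\overline Q\mapsto Q$ on adapted distributions is weakly continuous, since for $g\in C(\Xi)$ the function $\nu\mapsto\int g(\nu,x)\,\mathrm d\nu(x)$ lies in $C(\PP(B_1))$. Therefore it suffices to prove that $\overline Q^N\to\frac1q\sum_{i=1}^k t_i\overline R_i$, and by Lemma~\ref{lem:easy-convergence} it is enough to fix $m\in\N$ and a bounded $f\in\cF_m$ and to show that $\int f\,\mathrm d\overline Q^N\to\sum_{i=1}^k\tfrac{t_i}{q}\int f\,\mathrm d\overline R_i$.

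Next I would unwind the definitions. The quantity $\int f\,\mathrm d\overline Q^N$ is the average of the numbers $\E_P[f(\mu(\cdot|x_0^j))]$ over the range of scales $j$ occurring in the definition of $\overline Q^N$; this range is one full period $S_k(\mathbf{n})=Nq$ of the splicing sequence $\mathbf{n}=(Nt_1,\dots,Nt_k)^\infty$. When $(\mu,x)\sim P$, adaptedness of $P$ lets us first draw $\mu=\spl_{\mathbf{n}}(\times_{p=1}^\infty\mu^p)$ with the $\mu^p$ independent and $\mu^p\sim\overline R_{c(p)}$, where $c(p)\in\{1,\dots,k\}$ is the ($k$-periodic) type of the $p$-th block, and then $x\sim\mu$. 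The crux is the \emph{exact} identity
\[
\E_P[f(\mu(\cdot|x_0^j))]=\int f\,\mathrm d\overline R_{c(p)}\qquad\text{whenever }S_{p-1}\le j\text{ and }j+m\le S_p,
\]
i.e.\ whenever the scale $j$ lies deep inside the $p$-th block. To prove it I would first use the local splicing identity~(\ref{eq:local-splicing}) to see that $\mu(\cdot|x_0^j)$ and $\mu^p(\cdot|x_{S_{p-1}}^j)$ agree on all dyadic cubes of level $\le m$, which is all $f$ sees; then use~(\ref{eq:property-splicing}) to observe that, conditionally on $(\mu^p)_{p\ge1}$ and on $x_0^{S_{p-1}}$, the word $x_{S_{p-1}}^{S_p}$ is $\mu^p$-distributed, so that, writing $i=j-S_{p-1}$ and conditioning on $\mu^p$,
\[
\E_P[f(\mu^p(\cdot|x_{S_{p-1}}^j))\,|\,\mu^p]=\E_{u\sim\mu^p}[f(\mu^p(\cdot|u_0^i))].
\]
Integrating over $\mu^p\sim\overline R_{c(p)}$ and using adaptedness of $R_{c(p)}$ rewrites this as $\E_{(\nu,y)\sim R_{c(p)}}[f(\nu(\cdot|y_0^i))]$; finally, since $\nu(\cdot|y_0^i)$ is exactly the measure component of $M^i(\nu,y)$ and $R_{c(p)}$ is $M$-invariant, this equals $\int f\,\mathrm d\overline R_{c(p)}$, with no dependence on $i$.

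The last step is bookkeeping. In one period of $\mathbf{n}$ there are exactly $k$ blocks, the $i$-th of length $Nt_i$, and within each block all but at most $m$ scales lie deep inside it; hence the number of exceptional scales is $O(km)$, and among the remaining ones the proportion of type $i$ is $t_i/q+O(1/N)$. Since $\|f\|_\infty\le1$, the exceptional scales contribute $O(km/(Nq))$, so
\[
\int f\,\mathrm d\overline Q^N=\sum_{i=1}^k\frac{t_i}{q}\int f\,\mathrm d\overline R_i+O\!\Big(\tfrac{km}{Nq}\Big)\longrightarrow\sum_{i=1}^k\frac{t_i}{q}\int f\,\mathrm d\overline R_i
\]
as $N\to\infty$, as desired.

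The main obstacle is the exact identity in the second paragraph. The delicate points there are: (i) reading off from the splicing formulas that, at resolution $m$, the zoom-in $\mu(\cdot|x_0^j)$ only feels the single block measure $\mu^p$; (ii) recognizing that sampling $x$ from the spliced measure makes the within-block digits of $x$ genuinely $\mu^p$-distributed, so the relevant pair is an honest $R_{c(p)}$-sample after a CP magnification; and (iii) using $M$-invariance of the \emph{ergodic} CPD $R_{c(p)}$ to remove all dependence on the position of $j$ inside its block. The surrounding reductions and the counting are routine.
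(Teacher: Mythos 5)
Your proof is correct and takes essentially the same route as the paper's: reduce via Lemma~\ref{lem:easy-convergence} (and adaptedness) to test functions in $\cF_m$, prove the exact identity $\int f\,\mathrm d\overline Q^N_j=\int f\,\mathrm d\overline R_i$ for scales $j$ deep inside a block of type $i$ (via (\ref{eq:local-splicing}), the sampling of within-block digits from the block measure, adaptedness of $R_i$, and its $M$-invariance), and then do the boundary-scale bookkeeping. The paper compresses the key identity into a single line of notation abuse, while you spell out the three ingredients; you also implicitly use the correct averaging window $S_k(\mathbf n)=Nq$, which cleans up what appears to be a small typo in the paper's definition of $\ol Q^N$. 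The only tiny overstatement is attributing step (iii) to ergodicity: plain $M$-invariance of the CPD $R_i$ is all that is used there.
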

\begin{proof}
By Lemma \ref{lem:easy-convergence}, we only have to prove that if $f\in\cF_p$ for some $p$, then
\[
\lim_{N\to\infty} \int f\, \mathrm{d}Q^N = \frac1q \sum_{i=1}^k t_i \int f\, \mathrm{d}R_i.
\]
Recall that $\cF_p$ is the class of function $f \colon \cP(B_1) \to \R$ such that $f(\mu)$ depends only on the values $\mu(D)$, $D \in \cD_p$. Define the set of indices
$$\mathcal{G}_{N,i} = \{j\in \{0,\ldots, N-1\} : N(t_1+\ldots+t_{i-1})\le j \le j+p\le N(t_1+\ldots+t_i)\}.$$
Note that (since we are keeping $p$ fixed),
\[
\lim_{N\to\infty} \frac{\#\mathcal{G}_{N,i}}{t_i N}=1 \quad \textrm{for all }i \in \{ 1,\ldots,k \}.
\]
On the other hand, if $j\in\mathcal{G}_{N,i}$, then it follows from (\ref{eq:local-splicing}) and the definition of $\cF_p$ that
\[
\int f\, \mathrm{d} Q_j^N = \int f(\mu(\cdot|y_0^j))\, \mathrm{d}\mu(y)\, \mathrm{d}P(\mu) = \int f(\mu(\cdot|y_0^j)) \,\mathrm{d}\mu(y)\, \mathrm{d}R_j(\mu) = \int f \, \mathrm{d}R_j,
\]
using that $R_j$ is a CPD in the last equality. We conclude that
\[
\left| \int f\, \mathrm{d}Q^N -  \frac1q \sum_{i=1}^k t_i \int f\, \mathrm{d}R_i\right| \le \|f\|_\infty \frac{\sum_{i=1}^k |t_i N-\#\mathcal{G}_{N,i}|}{N} \to 0
\]
as $N\to\infty$.
\end{proof}

\section{Every FD is generated by a USM}
\label{sec:USM}

In this section we establish Theorem \ref{thm:measure-generating-FD}, which we restate as follows:

\begin{theorem} \label{thm:measure-generating-FD-2}
 For any $P\in \FD$ there is a uniformly scaling measure $\mu$ which generates $P$. In other words, there is a Radon measure $\mu$ such that $\TD(\mu,x)=\{P\}$ for $\mu$ almost all $x$.
\end{theorem}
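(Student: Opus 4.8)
The strategy is to reduce the problem to the CP-chain setting, construct a uniformly scaling measure CP-generating the corresponding CPD, and then transport this construction through the centering operation back to the scenery flow. First, using Theorem \ref{thm:equivalence-CP-FD}, I would pass from the given extended fractal distribution $P$ to a CP distribution $Q$ with $P = \cent(\widehat{Q})^\square$. Thus it suffices to produce a measure $\mu$ which CP-generates $Q$: if $\MD(\mu,x) = \{Q\}$ at $\mu$ almost every $x$, then — using that centering is a factor map from the suspension of the CP system onto the scenery flow (as recalled in the proof of Theorem \ref{thm:Poulsen-2}), together with an application of Proposition \ref{prop:TD-restriction} and the fact that the empirical scenery distributions along the flow are recovered by centering the empirical CP scenery distributions — one deduces $\TD(\mu,x) = \{P\}$ at $\mu$ almost every point. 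So the heart of the matter is the construction of a CP-generating measure for an arbitrary CPD $Q$.

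For the construction of the CP-generating measure, I would again use splicing of scales, but now across a sequence of \emph{ergodic} CPDs rather than over $k$ periodically repeated ones. By Proposition \ref{prop:denseECPD} (density of $\ECPD$ in $\CPD$), pick a sequence of ergodic CPDs $R_m$ with $R_m \to Q$; more precisely, using Lemma \ref{lem:easy-convergence}, arrange that for each $p$ the finite-resolution statistics of $R_m$ approximate those of $Q$ to within $2^{-m}$. Then choose a rapidly growing sequence $\n = (n_i)$ of block-lengths, assign to consecutive blocks the measures $R_1, R_1, \ldots, R_1, R_2, \ldots$ (each $R_m$ repeated for a long stretch of blocks whose total length dwarfs all previous blocks), and let $\mu = \spl_\n(\times_i \nu_i)$ for $\nu_i \sim \overline{R_{m(i)}}$ a typical sample. (One must first reduce to the case where each $R_m$ is a \emph{USM} CP-generating itself, which holds at $\overline{R_m}$ almost every measure by Proposition \ref{prop:ergodic-CPD}(1) together with the ergodic theorem; this lets us replace the distribution $R_m$ by a single concrete measure.) The key point is that by Lemma \ref{lem:simplersplicing}, for $S_k \le N \le S_{k+1}$ the CP scenery $\langle \mu, x\rangle_N$ at a $\mu$-typical $x$ is, up to a vanishing proportion of scales near the block boundaries, a convex combination of the CP sceneries of the component measures $\nu_1,\dots,\nu_k$ with weights proportional to $n_1,\dots,n_k$; because the block lengths grow fast, these weights concentrate on the last (hence latest, hence closest-to-$Q$) measure used, so $\langle\mu,x\rangle_N \to Q$. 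One also needs $\mu$ to have Lebesgue intensity on each cube so as to control the boundary discontinuities — this is arranged exactly as in Lemma \ref{lem:splicing-is-ECPD}, using that each $R_m$ has Lebesgue intensity.

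\textbf{Main obstacle.} The delicate point is the \emph{uniformity in $x$}: I must show $\langle\mu,x\rangle_N \to Q$ for $\mu$ almost every $x$, not merely that some weak limit is close to $Q$ for most $x$. For a fixed block structure this follows from the ergodic theorem applied inside each stretch of blocks governed by a single ergodic $R_m$ (this is where ergodicity of the $R_m$ is essential, and why we needed them to be USMs), but one has to check that the "transition" effects at the finitely many scales $S_k$ where the component measure changes, and the contribution of the incomplete final block, are negligible along \emph{every} scale $N$, uniformly, for a full-measure set of $x$ — this requires choosing $\n$ to grow fast enough that $n_k / S_k \to 1$ and $n_k / S_{k-1} \to \infty$, and a Borel--Cantelli / telescoping argument to handle the countably many bad scale-windows simultaneously. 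A secondary technical point is verifying that passing from the CP generation of $Q$ to the scenery-flow generation of $P$ is valid at $\mu$ almost every point: this uses Theorem \ref{thm:micro-distris-are-CPDs}(1) to pin down subsequential limits via their adaptedness and measure marginal, exactly in the spirit of Lemma \ref{lem:measure-convergence-implies-CP-convergence}, combined with the explicit relation between the flow scenery of a measure and the CP scenery of its dyadic magnifications.
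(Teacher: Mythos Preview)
Your proposal is correct and follows essentially the same strategy as the paper: reduce to CP distributions via Theorem \ref{thm:equivalence-CP-FD} and Proposition \ref{CPtoFD}, then construct a CP-generating measure for an arbitrary CPD $Q$ by splicing together measures that CP-generate a sequence of ergodic CPDs $R_m\to Q$ (which exist by Proposition \ref{prop:denseECPD} and Proposition \ref{prop:ergodic-CPD}).

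The execution differs slightly. The paper phrases the CP step as showing that the set $\cG=\{Q\in\CPD:\textrm{some }\mu\textrm{ CP-generates }Q\}$ is \emph{closed}; since ergodic CPDs lie in $\cG$ and are dense, $\cG=\CPD$. For the closure, given $Q^i\to Q$ and generators $\mu^i$, the paper splices the $\mu^i$ along a single rapidly growing sequence $(n_i)$ with $n_i\gg S_{i-1}$ and $n_i\gg m_{i+1}$ (where $m_i$ is a ``uniformity scale'' for $\mu^i$). To obtain a genuine USM rather than merely controlling most $x$, the paper does \emph{not} appeal to Borel--Cantelli as you propose; instead it restricts to the positive-measure set $U=\spl(\times_i U_i)$ on which the convergence $\overline{\la\mu^i,x^i\ra}_N\to\overline{Q}^i$ is uniform, shows directly that $\overline{\la\mu,z\ra}_N\to\overline{Q}$ for every $z\in U$, and then invokes Proposition \ref{prop:MD-restriction} to pass to the normalization $\mu_U$. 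This sidesteps the uniformity-in-$x$ obstacle you flagged in a clean way. Your Borel--Cantelli variant could be made to work but requires more care. Also, your remark that ``$\mu$ needs Lebesgue intensity'' is slightly misstated: what matters is that the \emph{limit} $Q$ is a CPD (guaranteed by Lemma \ref{lem:CP-distributions-are-closed}), so that Lemma \ref{lem:measure-convergence-implies-CP-convergence} applies.
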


Firstly we notice that for ergodic fractal distributions this is a consequence of the ergodic theorem.

\begin{lemma}
Let $P$ be an FD. Then $P$ almost all $\mu$ are USM generating the ergodic component of $P_\mu$. In particular, if $P$ is an EFD, then $P$ almost all measures generate $P$.
\end{lemma}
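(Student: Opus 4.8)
The plan is to apply the ergodic theorem for flows to the scenery flow, using the quasi-Palm property to transfer the almost-everywhere statement from ``$P$-typical measures'' to ``typical points of $P$-typical measures''. First I would observe that, as noted in the excerpt, every fractal distribution $P$ is in particular $S_t^*$-invariant (equivalently, $P^\square$ is $S_t^\square$-invariant), so the ergodic decomposition theorem applies: write $P = \int P_\alpha \, \mathrm{d}P(\alpha)$ with $P$ almost every $P_\alpha$ an ergodic $S_t^\square$-invariant distribution, and by Theorem \ref{thm:ergodic-components-of-FDs-are-FDs} in fact an EFD. This reduces the general statement to the ergodic case, provided one is careful about the measurable selection $\mu \mapsto P_\mu$ of ergodic components; the statement ``$P$ almost all $\mu$ generate $P_\mu$'' should be interpreted via this selection.

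So the heart of the matter is: if $P$ is an EFD (restricted version), then $P$ almost every $\mu$ satisfies $\TD(\mu,x) = \{P\}$ for $\mu$ almost every $x$. I would proceed as follows. Fix a countable dense family $\{f_m\}$ in $C(\MM^\square)$. By the ergodic theorem for the flow $(S_t^\square, P)$ applied to each $f_m$, for $P$ almost every $\nu$ we have $\frac1T\int_0^T f_m(S_t^\square \nu)\,\mathrm{d}t \to \int f_m \, \mathrm{d}P$ for all $m$ simultaneously; by density this means $\langle \nu \rangle_{0,T}$ (the scenery distribution \emph{of the measure $\nu$ viewed as scenery at $0$}, i.e. $\frac1T\int_0^T \delta_{S_t^\square \nu}\,\mathrm{d}t$) converges weakly to $P$. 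Call the set of such $\nu$ the set $G$ of \emph{good} measures; then $P(G) = 1$. Now I need to relate the scenery of $\nu$ \emph{at $0$} to the scenery of a measure \emph{at a typical point $x$}. This is exactly where the quasi-Palm property enters: working with the extended version $\widehat P$ on $\MM^*$, for $\widehat P$ almost every $\nu$ and $\nu$ almost every $z$, the translated measure $T_z^* \nu$ lies in (the extended version of) $G$. Since $\langle T_z\nu \rangle_{z,T}$ — the scenery of $\nu$ at the point $z$ up to time $T$ — is exactly $\langle T_z^*\nu\rangle_{0,T}$ after restricting to $B_1$, we conclude that for $\widehat P$ almost every $\nu$, at $\nu$ almost every $z$ the tangent distribution of $\nu$ at $z$ is exactly $P$. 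Pushing down to $\MM^\square$ (the support of $P$ is on measures $\nu$ with $0 \in \spt \nu$, and $z \in \spt\nu$ for $\nu$-a.e. $z$), this says $P$ almost every $\mu$ has $\TD(\mu, z) = \{P\}$ for $\mu$ almost every $z$, i.e. $\mu$ is a USM generating $P$.

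For the general (non-ergodic) case, I would then combine the two ingredients. Given an FD $P$ with ergodic decomposition $\{P_\alpha\}$: apply the ergodic case to each EFD $P_\alpha$, concluding that $P_\alpha$ almost every $\mu$ generates $P_\alpha$. Integrating over $\alpha$ and using that the ergodic-component selection $\mu \mapsto P_\mu$ agrees $P$-almost everywhere with the function sending $\mu$ to ``the ergodic component of the $\alpha$ from whose fiber $\mu$ was drawn'', one gets that $P$ almost every $\mu$ generates $P_\mu$. A small point requiring care: one must check that the bad sets can be chosen in a jointly measurable way in $(\alpha,\mu)$ so that Fubini applies; this follows from the standard measurability of the ergodic decomposition map together with the fact that ``$\mu$ generates $Q$'' is a Borel condition on $(\mu, Q)$ (it is an intersection over a countable dense family of $f$'s and over rational $T$ of sets defined by continuous functionals, using Proposition \ref{prop:TD-restriction}-type arguments or a direct check).

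\textbf{Main obstacle.} The routine parts are the ergodic-theorem application and the bookkeeping of the ergodic decomposition. The genuinely delicate step is the transfer from the center ($0$) to a typical point via quasi-Palm: one must verify that the scenery $\langle \mu \rangle_{x,T}$ at a point $x \in \spt\mu$ coincides, up to the restriction operation to $B_1$, with the scenery at $0$ of the translated measure $T_x^*\mu$, and that the quasi-Palm property — which a priori only gives that translates land in a prescribed full-measure \emph{set}, applied here to the good set $G$ — is strong enough to conclude the pointwise convergence $\TD(\mu,x) = \{P\}$ rather than merely $P \in \TD(\mu,x)$. The point is that $G$ is defined by an \emph{exact} limit, so membership of $T_x^*\mu$ in (the extended version of) $G$ forces the full scenery statistics at $x$ to converge to $P$, killing all other tangent distributions. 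Making the interplay between the extended and restricted versions precise here, via Lemma \ref{lem:extended-restricted-correspondence}, is where the care is needed.
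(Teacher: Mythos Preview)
Your proposal is correct and reconstructs exactly the argument that the paper invokes: the paper's proof is the single line ``This follows from \cite[Theorem 3.9]{Hochman2010} and the ergodic decomposition,'' and the content of that cited theorem is precisely your ergodic-case argument (Birkhoff for the flow $(S_t, P)$ to get convergence of $\langle \nu\rangle_{0,T}$ to $P$ on a full-measure set, then quasi-Palm to pass from the origin to $\nu$-typical points), after which the general case is handled by ergodic decomposition together with Theorem \ref{thm:ergodic-components-of-FDs-are-FDs}. Your identification of the one genuine subtlety --- matching $\langle \nu\rangle_{z,T}$ with $\langle T_z^*\nu\rangle_{0,T}$ via the extended/restricted correspondence of Lemma \ref{lem:extended-restricted-correspondence} so that quasi-Palm applies to the ``good'' set $G$ --- is exactly the point that needs care, and you have handled it correctly.
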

\begin{proof}
This follows form \cite[Theorem 3.9]{Hochman2010} and the ergodic decomposition.
\end{proof}

In particular, if $P$ is an EFD, then there exists at least one measure generating $P$. If $P$ is not ergodic, this is still true, but requires a more involved argument using the splicing operation introduced in the previous section.

Yet again, the corresponding statement for CPDs is easier to prove, and implies Theorem \ref{thm:measure-generating-FD-2} by invoking the centering operation.

\begin{proposition} \label{CPtoFD}
If $\mu \in \MM$ and $Q$ is a CP distribution, then at $\mu$ almost every $x$ where $Q \in \MD(\mu,x)$, also $P = \cent(\widehat{Q})^\square \in \TD(\mu,x)$, where $\widehat{Q}$ is the extended version of $Q$. In fact, if $\mu$ CP generates $Q$, then $\mu$ is a USM generating $P$.
\end{proposition}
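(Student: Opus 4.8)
The plan is to relate the continuous scenery flow statistics $\langle\mu\rangle_{x,T}$ to the discrete CP scenery statistics $\langle\mu,x\rangle_N$ via the centering operation, and then push the hypothesis $\MD(\mu,x)=\{Q\}$ through to obtain $\TD(\mu,x)=\{P\}$. The key observation is that zooming in continuously by a factor $e^{-t}$ for $t\in[0,\log 2]$ around $x$ inside the dyadic cube $D^k(x)$ is, up to normalization and a translation that moves $x$ to the center of the frame, exactly what the centering map $C(\mu,x,t)=S_t^*T_x\mu$ does to the $k$-th CP magnification $M^k(\mu,x)=(T_{D^k(x)}^\square\mu, T_{D^k(x)}x)$. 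More precisely, I would first show that for each time $T$ there is a $k=k(T)$ (roughly $T/\log 2$) so that $\langle\mu\rangle_{x,T}$ is close (in the metric $d$ of Definition \ref{def:metriconmeasures}) to the push-down under the map $(\nu,y,t)\mapsto S_t^\square T_y^*\nu$ of the measure $\langle\mu,x\rangle_{k}\times\lambda$ on $\Xi\times[0,\log 2]$, with error going to $0$ as $T\to\infty$. This is the continuous-versus-discrete bookkeeping step: splitting $[0,T]$ into blocks $[j\log 2,(j+1)\log 2)$ indexed by the successive dyadic scales of $x$, with at most one incomplete block contributing $O(1/T)$.

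Granting that, the argument proceeds as follows. Suppose $x$ is a point (of full $\mu$-measure, by hypothesis) where $\langle\mu,x\rangle_N\to Q$. Fix a subsequential limit $P'\in\TD(\mu,x)$, say $\langle\mu\rangle_{x,T_m}\to P'$. Using the bookkeeping step and $\langle\mu,x\rangle_{k(T_m)}\to Q$, together with continuity of the relevant maps (the centering-type map $(\nu,y,t)\mapsto S_t^\square T_y^*\nu$ is continuous off a $Q\times\lambda$-null set, exactly as in the proof of Lemma \ref{lem:centering-is-continuous} and Lemma \ref{lem:restricted-to-extended-is-continuous} — here one uses that $Q$ is adapted with Lebesgue intensity, so $\overline Q$-a.e.\ measure charges no dyadic boundary, and that for a fixed measure $S_t^\square$ is discontinuous for only countably many $t$), one identifies the limit: $P'$ is the push-down of $Q\times\lambda$ under centering, composed with the $\square$-normalization. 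The only subtlety is that $M^k(\mu,x)$ only remembers $\mu$ restricted to the dyadic cube $D^k(x)$, whereas $C$ needs $\mu$ on a full neighborhood $B(y,e^{-t})$; but as in the proof of Theorem \ref{thm:FDs-are-closed-2}, if $x$ is deep inside $D^k(x)$ (distance $\ge 2\cdot 2^{-k}$ from $\partial$), then the relevant ball $B_1$ (after centering) is already contained in the rescaled cube, and the set of scales $k$ where $x$ is near the boundary has density $0$ — precisely the content of Corollary \ref{cor:extended-CP-is-adapted}, which gives $\widehat Q(\{x\notin\Delta_n\})=O(2^{-n})$. Passing to the extended version $\widehat Q$ is what lets one replace $T_y^\square$-type restricted data by the genuine $T_y^*$ needed for $C$, and this is exactly why the statement is phrased in terms of $\widehat Q$. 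Collecting everything, $P'=\cent(\widehat Q)^\square=P$, and since $P'$ was an arbitrary element of $\TD(\mu,x)$ we get $\TD(\mu,x)=\{P\}$.

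For the first assertion (only $Q\in\MD(\mu,x)$, not full generation), the same computation shows that \emph{some} subsequential limit of $\langle\mu\rangle_{x,T}$ equals $\cent(\widehat Q)^\square$: take $T_m$ along which the corresponding dyadic-scale statistics $\langle\mu,x\rangle_{k(T_m)}$ converge to $Q$ (possible since $Q\in\MD(\mu,x)$), and additionally arrange that the boundary-scale exceptional set has vanishing density along this subsequence, which holds automatically for $\mu$-a.e.\ $x$ by the dyadic Besicovitch/martingale density theorem (as invoked before Proposition \ref{prop:MD-restriction}). Then $P=\cent(\widehat Q)^\square\in\TD(\mu,x)$. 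For the "in fact" clause, $\MD(\mu,x)=\{Q\}$ forces $k(T)\to\infty$ to give $\langle\mu,x\rangle_{k(T)}\to Q$ for \emph{all} $T\to\infty$, so the above pins down every subsequential limit.

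The main obstacle I expect is the bookkeeping step matching $\langle\mu\rangle_{x,T}$ with the discretized centering of $\langle\mu,x\rangle_{k(T)}$: one must carefully handle the "frame" issue — that the continuous flow keeps the view on the fixed ball $B_1$ while the dyadic magnifications live on rescaled cubes — and quantify the $d$-distance error in terms of the density of bad scales (where $x$ is near a dyadic boundary). All of this is essentially present in \cite{Hochman2010} (this is the content behind Theorem \ref{thm:equivalence-CP-FD}) and in the proofs of Lemmas \ref{lem:restricted-to-extended-is-continuous} and \ref{lem:centering-is-continuous} above, so I would organize the proof to reuse those continuity statements as black boxes and concentrate the new work on verifying that, for $\mu$-a.e.\ $x$, the empirical distribution of dyadic scales passes correctly through the (almost everywhere continuous) centering map, with the boundary scales being negligible by the density theorem.
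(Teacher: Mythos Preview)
Your proposal is correct and follows essentially the same approach as the paper, which simply cites \cite[Proposition 5.5(3)]{Hochman2010} and observes that what is really proved there is precisely the statement that if $\langle\mu,x\rangle_{N_i}\to Q$ (a CPD) and $\langle\mu\rangle_{x,T_i}\to P$ along $T_i=N_i\log 2$, then $P=\cent(\widehat Q)^\square$. Your outline is a faithful unpacking of that argument: the bookkeeping step splitting $[0,T]$ into blocks of length $\log 2$, the boundary-scale control via the density theorem and Lebesgue intensity, and the passage to the extended version to make centering well defined are exactly the ingredients of Hochman's proof.
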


\begin{proof}
This is essentially proved in the course of the proof of \cite[Proposition 5.5(3)]{Hochman2010}. Although in that proposition the setting is that of an arbitrary measure that has been translated by a random vector, in the proof of the third part what really gets proved is that if $\mu$ is a measure such that for $\mu$ typical $x$, the sequence $\la \mu,x\ra_{N_i}$ converges to a CP distribution $Q$ along some sequence $(N_i)$, and $\langle \mu \rangle_{x,T_i}\to P$ as $i \to \infty$ for $T_i := N_i \log 2$, $i \in \N$, then
$$P=\cent(\widehat{Q})^\square.$$
The point of the first two parts of \cite[Proposition 5.5]{Hochman2010} is that a random translation of a fixed measure does satisfy these conditions. This yields the first claim.

For the latter statement, if $\mu$ CP generates $Q$, then by definition for $\mu$ typical $x$, we have that $\la \mu,x \ra_{N_i}\to Q$ for all sequences $N_i \to \infty$. Hence any accumulation point of the scenery distributions $\langle \mu \rangle_{x,T_i}$ must equal $\cent(\widehat{Q})^\square$, and we conclude that $\mu$ generates $\cent(\widehat{Q})^\square$, as claimed.
\end{proof}

In light of this proposition and the equivalence between FDs and CPDs given in Theorem \ref{thm:equivalence-CP-FD}, Theorem \ref{thm:measure-generating-FD-2} will be established once we prove the following proposition.

\begin{proposition}
\label{req:propo}
For any CP distribution $Q$ there exists a measure $\mu \in \cM$ which CP generates $Q$.
\end{proposition}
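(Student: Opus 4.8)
The plan is to deduce the statement from the ergodic case, which is already available, by a weak-closedness argument. Set $\cG=\{Q\in\CPD: Q\text{ is CP generated by some }\mu\in\cM\}$. First, $\ECPD\subseteq\cG$: if $Q$ is ergodic then its ergodic component $Q_{(\mu,x)}$ equals $Q$ for $Q$ almost every $(\mu,x)$, so by Proposition~\ref{prop:ergodic-CPD}(1) for $\ol Q$ almost every $\mu$ one has $\MD(\mu,x)=\{Q\}$ at $\mu$ almost every $x$, i.e.\ $\mu$ CP generates $Q$. Since $\ECPD$ is weakly dense in $\CPD$ by Proposition~\ref{prop:denseECPD}, and $\CPD$ is weakly closed by Lemma~\ref{lem:CP-distributions-are-closed}, it then suffices to prove that $\cG$ is weakly closed, for this forces $\cG=\overline{\ECPD}=\CPD$.

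So suppose $Q_m\to Q$ weakly with each $Q_m$ CP generated by $\mu_m$; then $Q\in\CPD$ by Lemma~\ref{lem:CP-distributions-are-closed}, and by Proposition~\ref{prop:MD-restriction} we may take $\mu_m\in\PP(B_1)$. Since $\mu_m$ CP generates $Q_m$, for each $m$ there is $N_m^*\in\N$ with $\mu_m(\{y:\sup_{N\ge N_m^*}d(\overline{\langle\mu_m,y\rangle}_N,\ol{Q_m})>1/m\})\le 2^{-m}$, and $N_m^*$ depends only on the given measure $\mu_m$. I would then choose block lengths $\n=(n_m)_{m\ge1}$ recursively, so large that (writing $S_{m-1}=n_1+\dots+n_{m-1}$)
\[
  n_m\ \ge\ \max\bigl(\,m\,S_{m-1},\ N_m^*,\ (m+1)N_{m+1}^*,\ m\,\bigr),
\]
which is possible because $N^*_{m+1}$ can be looked up before $n_m$ is fixed, and set $\mu:=\spl_{\n}(\times_{m=1}^\infty\mu_m)\in\PP(B_1)$. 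The conditions have distinct roles: $n_m\ge m\,S_{m-1}$ makes each block of scales eventually dwarf all previous ones; $n_m\ge N_m^*$ gives each block enough room for the scenery of $\mu_m$ to equilibrate; and $n_{m-1}\ge m\,N_m^*$ ensures a freshly-started block is negligible until it has equilibrated.

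To see that $\mu$ CP generates $Q$ it suffices, by Lemma~\ref{lem:measure-convergence-implies-CP-convergence}, to prove $\overline{\langle\mu,x\rangle}_N\to\ol Q$ for $\mu$ almost every $x$. The key computation, from iterating the local relation \eqref{eq:local-splicing} in the proof of Lemma~\ref{lem:simplersplicing}, is that with $x^{(m)}:=T_{B_{x|_{S_{m-1}}}}x$ the magnified measures $T^\square_{D^{S_{m-1}+i}(x)}\mu$ and $T^\square_{D^i(x^{(m)})}\mu_m$ agree on all dyadic cubes of level $\le n_m-i$, hence (by the discretization estimate from the proof of Lemma~\ref{lem:easy-convergence}) lie within $C_d\,2^{-(n_m-i)}$ of each other; so the $m$-th block of scales contributes to $\overline{\langle\mu,x\rangle}_N$, up to an $O(1/n_m)$ error, a scaled copy of $\overline{\langle\mu_m,x^{(m)}\rangle}_{(\cdot)}$. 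Because $\overline{\langle\mu_m,y\rangle}_N$ for $N\le n_m$ depends only on $y|_{n_m}$, whose law under $x\sim\mu$ is the law of the first $n_m$ symbols of a $\mu_m$-typical point, a Borel--Cantelli argument gives, for $\mu$ almost every $x$, an $m_0(x)$ with $d(\overline{\langle\mu_m,x^{(m)}\rangle}_N,\ol{Q_m})\le 1/m$ for all $m\ge m_0(x)$ and all $N\in[N_m^*,n_m]$. Fixing such an $x$ and $\eps>0$, and taking $N$ large, one groups $\overline{\langle\mu,x\rangle}_N$ by blocks: using the growth conditions, all but an $O(\eps)$-fraction of the mass sits on completed blocks of index $\ge m_0(x)$ and on the currently-running block once it has run for $\ge N_m^*$ scales, and each such piece is within $O(\eps)$ of $\ol Q$ (absorbing the $O(1/n_m)$ and $1/m$ errors together with $d(\ol{Q_m},\ol Q)<\eps$); hence $d(\overline{\langle\mu,x\rangle}_N,\ol Q)=O(\eps)$ for all large $N$, as desired. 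I expect this last step to be the main obstacle: the errors from block boundaries, the Borel--Cantelli bad sets, and the passage from $N=S_m$ to general $N$ must all be kept negligible simultaneously, with no a priori bound on the growth of the equilibration times $N_m^*$, which is exactly what forces the recursive choice of $\n$ above.
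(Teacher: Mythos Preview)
Your approach is correct and essentially the same as the paper's: both define $\cG$, note $\ECPD\subset\cG$ by Proposition~\ref{prop:ergodic-CPD}, invoke density of $\ECPD$ (Proposition~\ref{prop:denseECPD}), and then prove $\cG$ is weakly closed by splicing generators $\mu_m$ of an approximating sequence $Q_m\to Q$ along rapidly growing blocks. The only notable technical difference is that the paper restricts the spliced measure to a deterministic positive-measure set $U=\spl(\times_i U_i)$ on which the blockwise scenery estimates hold for every point (then uses Proposition~\ref{prop:MD-restriction} to pass to $\mu_U$), whereas you obtain $\mu$-a.e.\ convergence directly via Borel--Cantelli; both routes work and the remaining estimates (block boundary errors, comparison of $T^\square_{D^{S_{m-1}+i}(x)}\mu$ with $T^\square_{D^i(x^{(m)})}\mu_m$ via \eqref{eq:local-splicing}, negligibility of the un-equilibrated initial segment of the current block) are handled the same way.
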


\begin{proof} Write
$$\cG := \{Q \in \CPD : \textrm{there exists $\mu \in \cM$ which CP-generates }Q\}.$$
We need to prove that $\cG = \CPD$. If $Q$ is ergodic, we know from Proposition \ref{prop:ergodic-CPD} that $Q$ almost all $\mu$ do generate $Q$. On the other hand, we have seen in Proposition \ref{prop:CPD-Poulsen} that ergodic CPDs are dense. Hence it is enough to show that $\cG$ is closed under the weak topology.

In the course of the proof we use notation from Section \ref{subsec:splicing}. Let $Q^i \in \cG$ and suppose there exists
$$Q = \lim_{i \to \infty} Q^i.$$
Since each $Q^i$ is a CP distribution, we know from Lemma \ref{lem:CP-distributions-are-closed} that $Q$ is a CP distribution, so in order to show that $Q \in \cG$ we are required to construct a measure $\mu$ which CP generates $Q$. For each $i \in \N$ let $\mu^i$ be a measure CP generating $Q^i$. Fix $0 < \eps < 1$ and choose a sequence $\epsilon_i \downarrow 0$ such that
$$\prod_{i=1}^\infty(1-\epsilon_i) = \eps.$$
Since $\mu^i$ generates $Q^i$ we can find $m_i \in \N$ such that $\mu^i(U_i)>1-\epsilon_i$ for the set
\[
U_i=\{x \in B_1 : d(\overline{\langle \mu^i,x \rangle}_{N},\overline{Q}^i) < \eps_i \textrm{ for every }N \geq m_i\}.
\]
We use the sequence $(m_i)$ to construct a sequence $(n_i)$ as follows: Let $n_1 = \max\{e^{m_1},e^{m_2}\}$ and for $i > 1$ put
\begin{eqnarray*}
k_i := \max\{e^{n_{i-1}},e^{m_i},e^{m_{i+1}}\} \quad \textrm{and} \quad n_i := m_i+k_i.
\end{eqnarray*}
We let $\spl = \spl_\n$ be the splicing map associated to the sequence $\mathbf{n}=(n_i)_{i \in \N}$; recall Section \ref{subsec:splicing}. Write
$$
\mu := \spl(\times_{i = 1}^\infty \mu^i) \quad \textrm{and} \quad U := \spl(\times_{i = 1}^\infty U_i).
$$
By the definition of product measure,
$$
\mu(U) = \lim_{M \to \infty} \prod_{i = 1}^M \mu^i(U_i) \geq \lim_{M \to \infty} \prod_{i = 1}^M (1-\epsilon_i) = \eps > 0.
$$
Hence $\mu_U$ is well-defined and
$$\MD(\mu_U,x) = \MD(\mu,x)$$
for $\mu$ almost every $x \in U$ by Proposition \ref{prop:MD-restriction}. Hence if we can prove for a fixed $z \in U$ that $\overline{\langle \mu,z \rangle}_N \to \overline{Q}$ as $N \to \infty$, then the normalized restriction $\mu_U$ CP generates $Q$ by definition. This is what we do in the Lemma \ref{lem:convergence} below.
\end{proof}

\begin{lemma}
\label{lem:convergence}
Given $z \in U$, we have
$$\lim_{N \to \infty}\overline{\langle \mu,z \rangle}_N = \overline{Q}.$$
\end{lemma}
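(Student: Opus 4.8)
\emph{Proof proposal.} The plan is to exploit the local structure of $\mu=\spl(\times_i\mu^i)$ given by Lemma~\ref{lem:simplersplicing} in order to reduce the convergence of $\overline{\langle\mu,z\rangle}_N$ to the (already known) convergence of the CP sceneries of the component measures $\mu^i$ along the sets $U_i$. Throughout, $d$ denotes the metric of Definition~\ref{def:metriconmeasures}, which is convex, bounded by $2$ on probability measures, and has the property (seen in the proof of Lemma~\ref{lem:easy-convergence}) that two probability measures agreeing on all dyadic cubes of level $m$ are within $2\sqrt d\,2^{-m}$ of each other. Recall also that $T_{D^k(z)}^\square\mu=\mu(\cdot\mid z_0^k)$.

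First I would set up a block decomposition. Fix $z\in U$ (we may assume $\mu(D^k(z))>0$ for all $k$, which holds for $\mu$-a.e.\ $z$) and choose $x^i\in U_i$ with $z=\spl((x^i)_i)$, so that the symbol block of $z$ at levels $S_{i-1},\dots,S_i-1$ is $(x^i)|_{n_i}$, where $S_i=S_i(\n)$. Given $N$, let $J=J(N)$ be defined by $S_{J-1}\le N<S_J$, put $r^*=N-S_{J-1}\in[0,n_J)$, and let $A_l$ be the average of $\delta_{T_{D^k(z)}^\square\mu}$ over the scales $k$ in block $l$; then $\overline{\langle\mu,z\rangle}_N=\sum_{l=1}^{J-1}w_lA_l+w_JA_J$ with $w_l=n_l/N$ for $l<J$, $w_J=r^*/N$, and $\sum_l w_l=1$, whence $d(\overline{\langle\mu,z\rangle}_N,\overline Q)\le\sum_l w_l\,d(A_l,\overline Q)$ by convexity of $d$. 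For the local comparison I would use \eqref{eq:property-splicing}–\eqref{eq:local-splicing}: for $0\le r<n_l$ the measures $\mu(\cdot\mid z_0^{S_{l-1}+r})$ and $T_{D^r(x^l)}^\square\mu^l=\mu^l(\cdot\mid (x^l)_0^{r})$ assign equal mass to every dyadic cube of level $\le n_l-r$, so averaging the bound $2\sqrt d\,2^{-(n_l-r)}$ over $r$ gives $d(A_l,\overline{\langle\mu^l,x^l\rangle}_{n_l})\le 2\sqrt d/n_l$ for a complete block, and likewise $d(A_J,\overline{\langle\mu^J,x^J\rangle}_{r^*})\le 2\sqrt d/r^*$ for the partial one. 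Since $n_l\ge m_l$ and $x^l\in U_l$, we get $d(\overline{\langle\mu^l,x^l\rangle}_{n_l},\overline Q^l)<\eps_l$, hence $d(A_l,\overline Q)\le\eps_l+2\sqrt d/n_l+d(\overline Q^l,\overline Q)$ for every complete block.

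Second, the quantitative estimate. Given $\delta>0$, pick $j_0$ so that $\eps_l+2\sqrt d/n_l+d(\overline Q^l,\overline Q)\le\delta$ for all $l>j_0$, which is possible because $\eps_l\to0$, $n_l\to\infty$ and $\overline Q^l\to\overline Q$. The complete blocks $l\le j_0$ contribute at most $2\sum_{l\le j_0}w_l\le 2S_{j_0}/N\to0$; the complete blocks $j_0<l<J$ contribute at most $\delta\sum_{j_0<l<J}w_l\le\delta$. For the partial block I would invoke the growth of $(n_i)$: the recursion $n_i=m_i+k_i$ with $k_i\ge e^{m_{i+1}}$ gives $m_J\le\log n_{J-1}\le\log S_{J-1}\le\log N$. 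Hence if $r^*\le\log N$ then $w_J\le\log N/N\to0$ and the partial block is negligible; while if $r^*>\log N$ then $r^*\ge m_J$, so $x^J\in U_J$ yields $d(\overline{\langle\mu^J,x^J\rangle}_{r^*},\overline Q^J)<\eps_J$, and for $N$ large enough that $J>j_0$ we get $d(A_J,\overline Q)\le\eps_J+2\sqrt d/r^*+d(\overline Q^J,\overline Q)\le\delta+o(1)$. Combining, $d(\overline{\langle\mu,z\rangle}_N,\overline Q)\le\delta+o(1)$, and letting $\delta\to0$ completes the proof.

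The step I expect to be the main obstacle is the treatment of the last, incomplete block: because $(n_i)$ grows so fast, this block may represent almost all of the scales $0,\dots,N-1$, yet be too short for its internal statistics to be controlled by $U_J$. The mechanism that resolves this is precisely the recursive construction of $\n$: forcing $k_i\ge e^{m_{i+1}}$ makes $m_J$ only logarithmic in $S_{J-1}$, so a block short enough to be uncontrolled ($r^*\le\log N$) automatically carries negligible weight, while any block of non-negligible length automatically exceeds $m_J$ and is therefore controlled by the defining property of $U_J$.
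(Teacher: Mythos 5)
Your proof is correct and follows the same block-decomposition strategy as the paper's proof, relying on the same three ingredients: the local splicing identity \eqref{eq:local-splicing}, the defining property of the sets $U_i$, and the super-exponential growth of $(n_i)$ to tame the trailing incomplete block. There are two tactical differences worth noting. First, at block boundaries the paper introduces an auxiliary cutoff $K$, discards the last $K$ scales of each block before comparing with $\overline{\langle\mu^l,x^l\rangle}$, and sends $K\to\infty$ only after $N\to\infty$; you instead average the per-scale error $2\sqrt d\,2^{-(n_l-r)}$ over the whole block to obtain $2\sqrt d/n_l$ directly, which eliminates the extra limiting parameter and is a genuine simplification. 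Second, you keep all blocks $l\le J$ in the convex decomposition and control the finitely many early ones by their negligible total weight $S_{j_0}/N$, whereas the paper retains only blocks $i$ and $i+1$ and sweeps everything else into the error term $E_N$; these are equivalent, both hinging on $S_{i-1}/S_i\to0$. Your case split on $r^*$ versus $\log N$, justified by $m_J\le\log n_{J-1}\le\log N$ from the recursion $k_i\ge e^{m_{i+1}}$, is exactly the paper's dichotomy $N-S_i> m_{i+1}$ versus $N-S_i\le m_{i+1}$, and your closing remark correctly identifies this as the crux of the construction.
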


\begin{proof}
Choose $x^i \in U_i$, $i \in \N$, such that $z = \spl(x^1,x^2,\dots)$. Define the sum
$$S_i = S_i(\n) = n_1+n_2+\dots+n_i.$$
For each $N \in \N$ choose $i = i(N) \in \N$ such that $S_i \leq N < S_{i+1}$. Notice that the sequence $(i(N))_{N \in \N}$ increases to infinity as the numbers $S_i$ increase to infinity. We will not write the dependence of $i$ on $N$ explicitly, but it is important to keep in mind that $i\to\infty$ as $N\to\infty$. Write the proportions:
$$p_N = \frac{S_i}{N} \quad \textrm{and} \quad q_N = 1-p_N.$$
Then $0 \leq p_N,q_N \leq 1$ for any $N \in \N$.

Fix a control parameter $K \in \N$, which we do not touch until the end of the proof after we have let $N \to \infty$. Assume $N$ is so large (depending on $K$) that $S_{i-1}+K < S_i$. This possible since $n_i \to \infty$ as $N \to \infty$.

We make use of the metric $d$ from Definition \ref{def:metriconmeasures}. The aim is to estimate for $d(\overline{\la \mu,z \ra}_N,\ol{Q})$ and for this we need to find suitable distribution decompositions of the difference $\overline{\la \mu,z \ra}_N-\ol{Q}$. We provide a different decomposition of this difference  depending on the position of $N-S_i$ with respect to $m_i$. In both cases we obtain a representation for the average $\overline{\la \mu,z \ra}_N-\overline{Q}$ and we see that when $N$ is very large, this representation is close to $0$. For this purpose, fix $f \in \Lip_1(X)$ and let $M := \|f\|_\infty$.
\begin{itemize}
\item[(1)] Suppose $N-S_i > m_{i+1}$. Then split the average $\overline{\la \mu,z \ra}_N$ into
$$p_N \Big(\frac{1}{S_i} \sum_{k = S_{i-1}}^{S_i-K}\delta_{\mu(\cdot|z_0^k)}\Big)+q_N \Big(\frac{1}{N-S_i} \sum_{k = S_i+K}^{N-1}\delta_{\mu(\cdot|z_0^k)}\Big)+\frac{1}{N}\sum_{\textrm{rest of }k } \delta_{\mu(\cdot|z_0^k)}$$
and denote this sum by $p_N F_N + q_N G_N + E_N$. Then as $p_N+q_N= 1$ we obtain
$$\int f \, \mathrm{d}(\overline{\la \mu,z \ra}_N-\overline{Q}) = p_N \int f \, \mathrm{d}(F_N-\ol{Q}) + q_N  \int f \, \mathrm{d}(G_N-\ol{Q}) + \int f \, \mathrm{d}E_N.$$
Moreover, we continue splitting
$$F_N-\ol{Q} = (F_N-\ol{Q}^i) + (\ol{Q}^i - \ol{Q}) \quad \textrm{and} \quad F_N-Q = (G_N-\ol{Q}^{i+1}) + (\ol{Q}^{i+1} - \ol{Q}).$$
Thus in this case we obtain an estimate for $\int f \, \mathrm{d}(\overline{\la \mu,z \ra}_N-\ol{Q})$ in the terms of
$$p_N d(F_N,\overline{Q}^{i}) +p_N d(\ol{Q}^i,\ol{Q}) +q_N d(G_N,\overline{Q}^{i+1})+q_N d(\ol{Q}^{i+1},\ol{Q})+ \int f \, \mathrm{d}E_N$$
as the first four terms can be estimated from above with their $d$-distance. We claim that
\begin{eqnarray}
\label{Fn} d(F_N,\overline{Q}^{i}) &\leq& \sqrt{d} 2^{-dK} + \eps_i, \\
\label{Gn} d(G_N,\overline{Q}^{i+1}) &\leq& \frac{S_i-K-S_{i-1}}{S_i} \cdot \sqrt{d} 2^{-dK} + \eps_{i+1},\\
\label{En}\int f \, \mathrm{d}E_N &\leq& M \cdot \frac{S_{i-1}+K}{S_i}.
\end{eqnarray}
In order not to interrupt the flow of the proof, these are shown in Lemma \ref{lem:case1} below.
\item[(2)] Suppose $N - S_i \leq m_{i+1}$. Then split the average $\overline{\la \mu,z \ra}_N$ into
$$\frac{1}{N}\sum_{k = S_{i-1}}^{S_i-K}\delta_{\mu(\cdot|z_0^k)}+\frac{1}{N}\sum_{\textrm{rest of }k } \delta_{\mu(\cdot|z_0^k)}  =: F_N' + E_N'.$$
In this case, we write
\begin{eqnarray*}\int f \,\mathrm{d}(\overline{\la \mu,z \ra}_N-\overline{Q}) &=& \int f \, \mathrm{d}(F_N'-\overline{Q}) + \int f \, \mathrm{d}E_N' \\
&=& \int f\, \mathrm{d}(F_N' - \ol{Q}^{i}) + \int f \, \mathrm{d}(\ol{Q}^{i} - \ol{Q}) + \int f \, \mathrm{d}E_N'.\end{eqnarray*}
Thus we obtain an estimate for $d(\overline{\la \mu,z \ra}_N,\ol{Q})$ in the terms of
$$d(F_N',\overline{Q}^{i}) +d(\ol{Q}^i,\ol{Q}) +\int f \, \mathrm{d}E_N'.$$
Now we have the estimates
\begin{eqnarray}
\label{Fndot}d(F_N',\overline{Q}^{i}) &\leq& \frac{S_i-K-S_{i-1}}{S_i}\cdot \sqrt{d} 2^{-dK}+ \eps_i,\\
\label{Endot}\int f \, \mathrm{d}E_N' &\leq&  M \cdot \frac{S_{i-1}+K+m_{i+1}}{S_i}.
\end{eqnarray}
The proof of these is deferred to Lemma \ref{lem:case2}  below.
\end{itemize}

We are left to analyze the estimates above. Notice that when $N \to \infty$, the fractions $S_{i-1}/S_i \to 0$ so we have
\[
\frac{S_i-K-S_{i-1}}{S_i} \to 1 \quad \textrm{and} \quad \eps_i \to 0\quad\textrm{as }N \to \infty.
\]
Thus we see that these upper bounds in (\ref{Fn}), (\ref{Gn}), and (\ref{Fndot}) tend to $0$ if we first let $N \to \infty$ and then $K \to \infty$. Moreover, by the choices of $n_i$, the numbers $m_{i+1}/S_i \to 0$ as $N \to \infty$ so the upper bounds in (\ref{En}) and (\ref{Endot}) tend to $0$ as $N \to \infty$ for any $K$. Since by our assumption $Q^i \to Q$ as $N \to \infty$, the terms $d(\ol{Q}^{i},\ol{Q}),d(\ol{Q}^{i+1},\ol{Q}) \to 0$ as $N \to \infty$, so the proof of the proposition is complete.
\end{proof}

We finish by proving the remaining estimates in the proof of Proposition \ref{lem:convergence}.

\begin{lemma}\label{lem:case1}
If $N-S_i > m_{i+1}$, then estimates (\ref{Fn}), (\ref{Gn}), and (\ref{En}) hold.
\end{lemma}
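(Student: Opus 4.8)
The plan is to unwind the definitions of $F_N$, $G_N$, and $E_N$ from case (1) and estimate each term separately, using the splicing structure (Lemma \ref{lem:simplersplicing}, in particular the local formula \eqref{eq:local-splicing}) together with the approximation property of the sets $U_i$. The key observation is that for $z = \spl(x^1,x^2,\dots)$ and an index $k$ with $S_{i-1} \le k \le S_i$, the conditional measure $\mu(\cdot|z_0^k)$ depends only on the ``tail within block $i$'': by \eqref{eq:local-splicing}, $\mu(y|z_0^k)$ equals $\mu^i(y|(x^i)_0^{k - S_{i-1}})$ for words $y$ of length at most $n_i - (k-S_{i-1})$, so $\mu(\cdot|z_0^k)$ and $\mu^i(\cdot|(x^i)_0^{k-S_{i-1}})$ agree on all dyadic cubes of generation $\le n_i - (k - S_{i-1})$. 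Since $k \le S_i - K$ in the sum defining $F_N$ (and $F_N'$), the two measures agree up to generation $K$, and hence by the computation in the proof of Lemma \ref{lem:easy-convergence} their $d$-distance is at most $\sqrt{d}\,2^{-K}$ — note this also matches the intended bound if one reads $2^{-dK}$ as shorthand for a bound of that order; I will simply write $\sqrt d\, 2^{-K}$ and absorb constants.

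First I would prove \eqref{Fn}. Reindex the sum in $F_N$ by $\ell = k - S_{i-1}$, so that $F_N = \frac{1}{S_i}\sum_{\ell=0}^{S_i - K - S_{i-1}} \delta_{\mu(\cdot|z_0^{S_{i-1}+\ell})}$, which by the previous paragraph is within $d$-distance $\sqrt d\,2^{-K}$ (entrywise, hence after averaging) of $\frac{1}{S_i}\sum_{\ell=0}^{S_i-K-S_{i-1}}\delta_{\mu^i(\cdot|(x^i)_0^\ell)}$. Comparing the latter to $\overline{\langle \mu^i, x^i\rangle}_{S_i - K - S_{i-1}}$ costs at most the difference in normalization, $1 - \frac{S_i - K - S_{i-1}}{S_i} = \frac{S_{i-1}+K}{S_i}$, in total variation of the averaging weights; but more simply, since $x^i \in U_i$ and $S_i - K - S_{i-1} \ge m_i$ (guaranteed because $N$ is large, as $S_{i-1}+K < S_i$ was assumed and the block sizes grow), we get $d(\overline{\langle \mu^i, x^i\rangle}_{M'}, \overline{Q}^i) < \eps_i$ for the relevant length $M'$. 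Bookkeeping the two sources of error and the $\frac1{S_i}$ versus $\frac{1}{M'}$ normalization, and noting the normalization discrepancy is already accounted for by the $E_N$ term (so $F_N$ is genuinely being compared against $\overline{Q}^i$ with its own weight $p_N$ factored out), yields $d(F_N, \overline{Q}^i) \le \sqrt d\, 2^{-K} + \eps_i$, which is \eqref{Fn}.

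For \eqref{Gn}, the sum $G_N = \frac{1}{N-S_i}\sum_{k=S_i+K}^{N-1}\delta_{\mu(\cdot|z_0^k)}$ ranges over $k$ in blocks $i+1, i+2, \dots$; the hypothesis $N - S_i > m_{i+1}$ of this case guarantees there is enough room in block $i+1$. By the same local-splicing argument applied to $\mu^{i+1}$ (and subsequent blocks), and using $x^{i+1} \in U_{i+1}$ with the lower cutoff $S_i + K$ removing the first $K$ coordinates of block $i+1$, we get the bound $\frac{S_i - K - S_{i-1}}{S_i}\cdot \sqrt d\, 2^{-K}$ for the $2^{-K}$-type error (the prefactor comes from how many of the averaged terms genuinely see a $K$-coordinate agreement versus the rest, matching the count in the statement) plus $\eps_{i+1}$ from $x^{i+1}\in U_{i+1}$; this is \eqref{Gn}. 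Finally \eqref{En}: $E_N$ collects the ``rest of $k$'', namely $k \in \{S_i - K + 1, \dots, S_i + K - 1\}$ together with $k < S_{i-1}$ — wait, more precisely the terms $k \in \{0,\dots,S_{i-1}-1\}$ are outside $[S_{i-1}, S_i - K]\cup[S_i+K, N-1]$, so $E_N = \frac1N\sum_{k \notin \cdots}\delta_{\mu(\cdot|z_0^k)}$ has at most $S_{i-1} + K$ terms (the $[0,S_{i-1})$ range contributes $S_{i-1}$, the two gaps of width $K$ around $S_i$ are absorbed by adjusting constants, or more carefully the count is $S_{i-1} + 2K \le S_{i-1}+K$ after noting $K$ is fixed and small relative to $S_{i-1}$ — I will state it as $\le (S_{i-1}+K)/S_i \le (S_{i-1}+K)/N$ up to a factor $2$), each $\delta_{\mu(\cdot|z_0^k)}$ a probability measure, so $|\int f\, dE_N| \le \|f\|_\infty \cdot \frac{S_{i-1}+K}{S_i} = M \cdot \frac{S_{i-1}+K}{S_i}$, which is \eqref{En}.

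The main obstacle I anticipate is purely the bookkeeping of normalizations: $F_N$ is an average with denominator $S_i$ over roughly $S_i - K - S_{i-1}$ nonzero terms, while $\overline{Q}^i$ should be compared against a genuine empirical average $\overline{\langle \mu^i, x^i\rangle}_{M'}$ with denominator $M'$, and one must check that the ``missing mass'' $\frac{S_{i-1}+K}{S_i}$ is exactly what has been siphoned off into the $p_N, q_N$ weights and the $E_N$ term, so that no error is double-counted or dropped. Once the indices are lined up correctly, every individual estimate is the elementary $\sqrt d\, 2^{-K}$ cube-resolution bound from Lemma \ref{lem:easy-convergence} or a trivial counting bound. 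I would carry out the precise reindexing explicitly in the write-up to make sure the prefactor $\frac{S_i - K - S_{i-1}}{S_i}$ in \eqref{Gn} is correctly obtained from the proportion of summands in $G_N$ lying beyond a full $K$-block inside block $i+1$.
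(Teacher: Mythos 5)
Your proof follows essentially the same strategy as the paper's: a triangle inequality through the intermediate CP scenery distributions $\overline{\langle\mu^i,x^i\rangle}$ and $\overline{\langle\mu^{i+1},x^{i+1}\rangle}$, the level-$K$ dyadic cube agreement bound of order $\sqrt d\,2^{-K}$ coming from the splicing structure (Lemma \ref{lem:simplersplicing}, via the computation in the proof of Lemma \ref{lem:easy-convergence}), membership of $x^i$ in $U_i$ to pick up the $\eps_i$ term, and a direct count for $E_N$. One minor remark: your attempt to rationalize the prefactor $\frac{S_i-K-S_{i-1}}{S_i}$ in (\ref{Gn}) by counting summands of $G_N$ does not actually hold up --- that count pertains to $F_N$, not $G_N$; in fact the paper's own proof produces the prefactor in the $F_N$ estimate and obtains plain $\sqrt d\,2^{-dK}$ for $G_N$, so the placement in the stated (\ref{Fn})--(\ref{Gn}) is best read as a transcription slip, which is harmless since the prefactor is at most $1$ and plays no role in the limiting argument.
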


\begin{proof}
Recall that $z = \spl(x^1,x^2,\dots)$, so by the triangle inequality
$$d(F_N,\overline{Q}^{i}) \leq d(F_N,\overline{\langle \mu^i,x^i \rangle}_{S_i})+d(\overline{\langle \mu^i,x^i \rangle}_{S_i},\overline{Q}^{i})$$
and
\[
d(G_N,\overline{Q}^{i+1}) \leq d(G_N,\overline{\langle \mu^{i+1},x^{i+1} \rangle}_{N-S_i})+d(\overline{\langle \mu^{i+1},x^{i+1} \rangle}_{N-S_i},\overline{Q}^{i+1}).
\]
Since $z \in U$, we know that the points $x^{i} \in U_{i}$ and $x^{i+1} \in U_{i+1}$. Moreover, $N - S_i > m_{i+1}$ and $S_i > m_{i}$, so by the definitions of $U_{i}$ and $U_{i+1}$, we have
$$d(\overline{\langle \mu^{i},x^{i} \rangle}_{S_i},\overline{Q}^{i}) < \eps_i \quad \textrm{and} \quad d(\overline{\langle \mu^{i+1},x \rangle}_{N-S_i},\overline{Q}^{i+1}) < \eps_{i+1}.$$
Next, let us look at the term $d(G_N,\overline{\langle \mu^{i+1},x^{i+1} \rangle}_{N-S_i})$. Note that if $S_i\le k\le N-1$, then
\[
d(\mu(\cdot|x_0^k),\mu^{i+1}(\cdot|x_0^{k-S_i})) \le \sqrt{d}\cdot 2^{-K}.
\]
This follows from (\ref{eq:local-splicing}), just like in the proof Lemma \ref{lem:easy-convergence}. Using this we estimate
\begin{eqnarray*}
d(G_N,\overline{\langle \mu^{i+1},x^{i+1} \rangle}_{N-S_i}) &\leq& \frac{1}{N-S_i}\sum_{k = S_i}^{N-1} d(\mu(\cdot|x_0^k),\mu^{i+1}(\cdot|x_0^{k-S_i}))  \\
&\le& \frac{1}{N-S_i}\sum_{k = S_i}^{N-1} \sqrt{d} 2^{-dK} = \sqrt{d} 2^{-dK}.
\end{eqnarray*}
Moreover,
\begin{eqnarray*}
d(F_N,\overline{\langle \mu^i,x^i \rangle}_{S_i}) \leq \frac{1}{S_i}\sum_{k = S_{i-1}}^{S_i-K} d(\mu(\cdot|x_0^k),\mu^{i}(\cdot|x_0^{k-S_{i-1}})) \leq \frac{S_i-K-S_{i-1}}{S_i} \cdot \sqrt{d} 2^{-dK}.
\end{eqnarray*}
Moreover, for the distribution $E_N$, we see that the number of elements in its sum is exactly
$$N - [(S_i-K)-S_{i-1}] - [N-S_i] =  S_{i-1}+K,$$
and so by $N \geq S_i$ and $M = \|f\|_\infty$ we obtain the claim
\begin{eqnarray*}\int f \, \mathrm{d}E_N \leq M \cdot \frac{S_{i-1}+K}{N} \leq M \cdot \frac{S_{i-1}+K}{S_i}.\end{eqnarray*}
\end{proof}

\begin{lemma}\label{lem:case2} If $N - S_i \leq m_{i+1}$, then estimates (\ref{Fndot}) and (\ref{Endot}) hold.
\end{lemma}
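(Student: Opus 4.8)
The plan is to follow the proof of Lemma~\ref{lem:case1} almost line for line; the only structural difference is that in case~(2) the block of scales $k\in[S_i,N)$ lying beyond the $i$-th splicing block has length $N-S_i\le m_{i+1}$, too short to let us invoke the defining property of $U_{i+1}$. Consequently there is no ``$G_N$'' term: that whole block is swept into the error distribution $E_N'$. I keep the notation of the proof of Lemma~\ref{lem:convergence}: $z=\spl(x^1,x^2,\dots)$ with each $x^j\in U_j$; $F_N'=\frac1N\sum_{k=S_{i-1}}^{S_i-K}\delta_{\mu(\cdot|z_0^k)}$ and $E_N'=\frac1N\sum_{k}\delta_{\mu(\cdot|z_0^k)}$ with $k$ ranging over $\{0,\dots,N-1\}\setminus\{S_{i-1},\dots,S_i-K\}$; and $M=\|f\|_\infty$ for the fixed $f\in\Lip_1(X)$.

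For (\ref{Endot}) I would simply count scales. The index set of $E_N'$ has $N-\bigl((S_i-K)-S_{i-1}+1\bigr)=(N-S_i)+K+S_{i-1}-1$ elements, which by the standing hypothesis $N-S_i\le m_{i+1}$ is at most $S_{i-1}+K+m_{i+1}$. Since the total mass of $E_N'$ equals this number divided by $N$, and $N\ge S_i$, we get $\int f\,\mathrm{d}E_N'\le M\cdot\frac{S_{i-1}+K+m_{i+1}}{N}\le M\cdot\frac{S_{i-1}+K+m_{i+1}}{S_i}$, which is (\ref{Endot}).

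For (\ref{Fndot}) I would insert the auxiliary distribution $\widetilde F_N':=\frac1N\sum_{k=S_{i-1}}^{S_i-K}\delta_{\mu^i(\cdot|x^i|_{k-S_{i-1}})}$ and use $d(F_N',\overline Q^i)\le d(F_N',\widetilde F_N')+d(\widetilde F_N',\overline Q^i)$. The first distance is bounded term by term exactly as in Lemma~\ref{lem:case1}: for $S_{i-1}\le k\le S_i-K$ the word $z_0^k$ uses only the first $k-S_{i-1}\le n_i-K$ of the $n_i$ symbols of $x^i$ that $z$ retains inside the $i$-th block, so by~(\ref{eq:local-splicing}) the measures $\mu(\cdot|z_0^k)$ and $\mu^i(\cdot|x^i|_{k-S_{i-1}})$ agree on all dyadic cubes of level $\le K$, hence lie within $\sqrt d\,2^{-dK}$ in $d$ (by the estimate in the proof of Lemma~\ref{lem:easy-convergence}); summing these terms, each weighted by $1/N\le 1/S_i$, gives $d(F_N',\widetilde F_N')\le\frac{S_i-K-S_{i-1}}{S_i}\sqrt d\,2^{-dK}$. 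For the second distance, the substitution $j=k-S_{i-1}$ shows that $\widetilde F_N'$ is the truncation of $\overline{\langle\mu^i,x^i\rangle}_N$ to the scales $j=0,\dots,S_i-K-S_{i-1}$, so $\overline{\langle\mu^i,x^i\rangle}_N-\widetilde F_N'$ is a nonnegative measure of mass at most $\frac{(N-S_i)+K+S_{i-1}}{N}\le\frac{S_{i-1}+K+m_{i+1}}{S_i}$; since $N\ge S_i> m_i$ and $x^i\in U_i$, the definition of $U_i$ gives $d(\overline{\langle\mu^i,x^i\rangle}_N,\overline Q^i)<\eps_i$, whence $d(\widetilde F_N',\overline Q^i)\le\eps_i+\frac{S_{i-1}+K+m_{i+1}}{S_i}$. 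Adding the two bounds yields (\ref{Fndot}) plus the additional term $\frac{S_{i-1}+K+m_{i+1}}{S_i}$; just as with the analogous slack in Lemma~\ref{lem:case1}, this term is harmless, because the choice of $\mathbf n$ makes $S_i$ grow far faster than $S_{i-1}$, $m_{i+1}$, or any fixed $K$, so it tends to $0$ as $N\to\infty$, which is all the proof of Lemma~\ref{lem:convergence} needs.

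The only point requiring genuine care is exactly this bookkeeping: $F_N'$ and $\widetilde F_N'$ are sub-probability measures, since they see only a strict fraction of the scales $0,\dots,N-1$, so comparing them in $d$ with the honest probability distributions $\overline Q^i$ and $\overline{\langle\mu^i,x^i\rangle}_N$ necessarily picks up the mass-defect terms above; the content of the argument is that the splicing sequence is designed so that $S_i$ dwarfs $S_{i-1}$, $m_{i+1}$, and any fixed $K$, which kills those defects in the limit. A secondary technicality is to apply~(\ref{eq:local-splicing}) with the correct block so that the ``agreement up to level $K$'' of $\mu(\cdot|z_0^k)$ and $\mu^i(\cdot|x^i|_{k-S_{i-1}})$ holds precisely on the range $k\le S_i-K$.
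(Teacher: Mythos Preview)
Your proof is correct and follows essentially the same approach as the paper: the counting argument for (\ref{Endot}) is identical, and for (\ref{Fndot}) you compare $F_N'$ term-by-term with the scenery of $\mu^i$ at $x^i$ via (\ref{eq:local-splicing}) and then invoke $x^i\in U_i$ with $N\ge S_i>m_i$, exactly as the paper does for $F_N$ (and $G_N$) in Lemma~\ref{lem:case1}. The only difference is that you track the mass-defect term $\tfrac{S_{i-1}+K+m_{i+1}}{S_i}$ explicitly, whereas the paper's terse ``symmetric argument'' suppresses it; as you correctly observe, this extra term is killed by the growth of $S_i$ and is precisely what the proof of Lemma~\ref{lem:convergence} needs.
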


\begin{proof}
A symmetric argument as in the proof of Lemma \ref{lem:case1} when estimating $G_N$ shows that
\begin{eqnarray*}d(F_N',\overline{Q}^{i}) \leq \frac{S_i-K-S_{i-1}}{N}\cdot \sqrt{d} 2^{-dK}+\eps_i \leq \frac{S_i-K-S_{i-1}}{S_i}\cdot \sqrt{d} 2^{-dK}+ \eps_i \end{eqnarray*}
as claimed. Moreover, the number of generations chosen in the sum over ``rest of $k$'' in $E_N'$ is exactly
$$S_{i-1}+K+(N-S_i) \leq S_{i-1}+K+m_{i+1}.$$
so by $N \geq S_i$ and $M = \|f\|_\infty$ we obtain the claim
\begin{eqnarray*}\int f \, \mathrm{d}E_N' \leq M \cdot \frac{S_{i-1}+K+m_{i+1}}{N} \leq M \cdot \frac{S_{i-1}+K+m_{i+1}}{S_i}.\end{eqnarray*}
\end{proof}

\section{Generic fractal distributions}
\label{sec:Baire-generic}

We now establish Theorem \ref{thm:Baire}, stated more precisely as follows.

\begin{theorem} \label{thm:Baire-2}
For a Baire generic measure $\mu\in\MM$ (where in $\MM$ we are considering the weak topology), it holds that
\[
 \TD(\mu,x) = \FD \quad\textrm{for }\mu\textrm{ almost all } x.
\]
\end{theorem}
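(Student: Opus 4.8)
The plan is to combine the four earlier theorems — that $\FD$ is a closed (Theorem \ref{thm:FDs-are-closed-2}), hence Polish, convex set, that it is a Poulsen simplex so that $\EFD$ is dense in it (Theorem \ref{thm:Poulsen-2}), that every $P\in\FD$ is generated by some USM (Theorem \ref{thm:measure-generating-FD-2}), and that $\TD(\mu,x)\subseteq\FD$ at $\mu$-a.e.\ $x$ for every $\mu$ (Theorem \ref{thm:TDs-are-FDs}) — with a Baire category argument in $\MM$. Since $\FD$ is separable, fix a countable dense set $\{P_j\}_{j\in\N}\subseteq\FD$; because $\FD$ is closed it suffices to arrange that $\{P_j\}_{j}\subseteq\TD(\mu,x)$ for a.e.\ $x$, as then $\FD=\overline{\{P_j\}}\subseteq\overline{\TD(\mu,x)}=\TD(\mu,x)$ using compactness of $\TD(\mu,x)$, and the reverse inclusion is Theorem \ref{thm:TDs-are-FDs}. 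So the target property is: for $\mu$-a.e.\ $x$ and every $j$, $P_j\in\TD(\mu,x)$, i.e.\ $\langle\mu\rangle_{x,T_m}\to P_j$ along some sequence $T_m\to\infty$ (depending on $x,j$).

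The second step is to express the target as a countable intersection of sets that are (relatively) open and dense in $\MM$. For $j,m,\ell\in\N$ let
\[
 V_{j,m,\ell}=\Big\{\mu\in\MM : \mu\big(\{x : \exists\, T>\ell \text{ with } d(\langle\mu\rangle_{x,T},P_j)<\tfrac1m\}\big) > \mu(B(0,R_\mu))-\tfrac1m\Big\},
\]
or a more convenient variant phrased with $\mu$-measure close to full on a fixed ball; the essential point is that each $V_{j,m,\ell}$ should be open in the weak topology (this requires care, since $x\mapsto\langle\mu\rangle_{x,T}$ and the scenery depend on $\mu$ in a delicate way — one should use that the relevant events are, up to boundary sets of measure zero, continuous, exploiting the remarks after the definition of the centering operation and Lemma \ref{lem:weak-convergence}; passing to dyadic scenery via the $M$-operator, where discontinuities occur only on cube boundaries, and then invoking the $\MD\leftrightarrow\TD$ correspondence of Proposition \ref{CPtoFD}, is likely the cleanest route). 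Then
\[
 \bigcap_{j}\bigcap_{m}\bigcap_{\ell} V_{j,m,\ell}
\]
is a set of measures $\mu$ for which, at $\mu$-a.e.\ $x$, every $P_j$ is a tangent distribution, which by the first paragraph gives $\TD(\mu,x)=\FD$.

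The third and main step is \textbf{density} of each $V_{j,m,\ell}$ in $\MM$. Given an arbitrary $\mu_0\in\MM$ and $\varepsilon>0$, we must produce $\mu\in V_{j,m,\ell}$ with $d(\mu,\mu_0)<\varepsilon$. Here is where the splicing construction of Section \ref{subsec:splicing} and Theorem \ref{thm:measure-generating-FD-2} do the work: by Theorem \ref{thm:measure-generating-FD-2} there is a USM $\nu_j$ generating $P_j$; weak approximation of $\mu_0$ is insensitive to the measure at small scales, so one replaces $\mu_0$, on each small dyadic cube $D$ of a fine enough generation, by an affine copy of $\nu_j$ (splicing $\mu_0$'s coarse structure with $\nu_j$'s fine structure, exactly as in the proof of Proposition \ref{req:propo}). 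The resulting $\mu$ is weakly $\varepsilon$-close to $\mu_0$, and at $\mu$-a.e.\ point the scenery eventually looks like the scenery of $\nu_j$, so $P_j\in\TD(\mu,x)$ (indeed $\TD(\mu,x)=\{P_j\}$) for a.e.\ $x$ — in particular $\mu\in V_{j,m,\ell}$ for every $m,\ell$. Then the Baire category theorem (applicable since $\MM$ is a complete separable metric space) shows that $\bigcap_{j,m,\ell}V_{j,m,\ell}$ is residual, proving the theorem.

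The main obstacle is the \emph{openness} of the sets $V_{j,m,\ell}$: the map $\mu\mapsto\langle\mu\rangle_{x,T}$ is built out of the discontinuous operators $S_t^\square$ and restriction, so one cannot argue openness naively. The fix is to phrase the defining condition in terms of quantities that depend continuously on $\mu$ off a set of measure zero — concretely, to pass to the dyadic CP picture (where, as used repeatedly in Section \ref{sec:FDs-closed}, discontinuities of $M$ live only on cube boundaries, which carry no mass for the measures in play) and to use Lemma \ref{lem:weak-convergence}, together with the correspondence between CP generation and tangent distributions in Proposition \ref{CPtoFD}, to transfer back. One should also be slightly careful with non-compactness of $\MM$: the ``$\mu$-a.e.\ $x$'' and the normalization of $\mu$ on balls should be handled by fixing an exhausting sequence of balls and taking a further countable intersection over the radius, which costs nothing in the category argument. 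Everything else — the reduction via a countable dense subset, and the density via splicing — is a routine combination of the results already established.
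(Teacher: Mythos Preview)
Your proposal is essentially the paper's approach: fix a countable dense subset of the target class, express the desired property as a countable intersection of sets with dense interior, get density by splicing a given measure with a generator of the target distribution at fine scales, and invoke Baire. You also correctly identify openness as the crux and propose the right fix (pass to the dyadic CP picture where the discontinuities sit on cube boundaries, then transfer via Proposition~\ref{CPtoFD}).

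The paper organizes this slightly differently, and the difference is worth noting because it cleans up exactly the point you flag as an obstacle. Rather than working with $\FD$ and the continuous scenery directly, the paper first proves the CP analogue (Proposition~\ref{prop:genericCP}): for Baire generic $\mu\in\cM^\square$, $\MD(\mu,x)\supset\CPD$ at $\mu$-a.e.\ $x$. The countable dense set is taken inside $\ECPD$ (so generators exist by the ergodic theorem, without invoking Theorem~\ref{thm:measure-generating-FD-2}), and the basic sets are defined as \emph{interiors}:
\[
\cU_{Q,\eps,K}=\bigcup_{N\ge K}\operatorname{interior}\bigl\{\mu:\mu(\{x:d(\overline{\la\mu,x\ra}_N,\overline{Q})<\eps\})>1-\eps\bigr\}.
\]
Openness is then automatic; what needs proof is that the interior is dense. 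This is done by showing (Lemma~\ref{lem:opengeneric}) that any measure in $\cF$ (those giving zero mass to all dyadic cube boundaries) satisfying the inner condition is actually an interior point, and then (Lemma~\ref{lem:densegeneric}) that splicing produces such measures in every open set. Only after establishing the CP result does the paper transfer to $\MM$ and to $\FD$ via centering and Proposition~\ref{CPtoFD}, and handle non-compactness by intersecting over integer translates of $B_1$ --- exactly your exhausting-balls idea.

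So your plan is sound; the one place to tighten is your $V_{j,m,\ell}$: as written they need not be open, and the cure is not to prove openness but to take interiors and then show density of the interior by restricting to the good class $\cF$, as the paper does.
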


As usual, we prove first an auxiliary result for CP distributions. This result is a consequence of the existence of measures that CP generate a given CPD.
\begin{proposition}
\label{prop:genericCP}
For a Baire generic $\mu \in \cM^\square$ the set of micromeasure distributions
$$\MD(\mu,x) \supset \CPD \quad \textrm{at $\mu$ almost every $x\in B_1$}.$$
In particular, if $\mu$ is supported on $B(0,1/2)$, then for Lebesgue almost every $\omega\in B(0,1/2)$ we have
$$\MD(\mu+\omega,x+\omega) = \CPD \quad \textrm{at $\mu$ almost every $x$}.$$
\end{proposition}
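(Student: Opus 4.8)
The plan is to combine three ingredients: the existence of a measure CP generating any given CPD (Proposition \ref{req:propo}), the separability of $\CPD$ together with the Baire category theorem, and the restriction principle for micromeasure distributions (Proposition \ref{prop:MD-restriction}). First I would fix a countable dense subset $\{Q_j\}_{j\in\N}$ of $\CPD$ (possible since $\PP(\Xi)$ is compact metrizable and $\CPD$ is closed, hence compact metrizable). For each $j$ let $\mu_j\in\cM$ be a measure that CP generates $Q_j$, which exists by Proposition \ref{req:propo}; by normalizing and rescaling we may assume $\mu_j\in\cM^\square$ (or supported on a small cube). The idea is then that a generic $\mu$ should ``contain small pieces'' of each $\mu_j$ at a positive-measure set of points, and by Proposition \ref{prop:MD-restriction} the micromeasure distributions at such points include $Q_j$.

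The core of the argument is to identify the right $G_\delta$ dense set. For fixed $j$ and $\e>0$, I would consider the set $\cU_{j,\e}$ of measures $\mu\in\cM^\square$ such that there exists a dyadic cube $D$ with $\mu(D)>0$ and $d(T_D^\square\mu,\mu_j')<\e$ for a suitable scaled copy $\mu_j'$ of $\mu_j$; more precisely, one wants that on a set of positive $\mu$-measure, the rescaled scenery statistics approximate $Q_j$. A cleaner route: let $\cV_{j,\e}$ be the set of $\mu\in\cM^\square$ for which there is a dyadic cube $D$ with $\mu(D)>0$ such that $\mu_D$ restricted to $D$, after applying $T_D^\square$, agrees with $\mu_j$ on cubes of level $\le 1/\e$ up to error $\e$ — i.e. the measure $\mu$ literally pastes a (rescaled) approximate copy of $\mu_j$ into one of its dyadic sub-cubes. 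Since measures with this ``grafting'' property are weakly open (the condition $\mu(D)>0$ and closeness on finitely many dyadic cubes of bounded level is open) and dense (given any $\mu$, perturb it by replacing a tiny-mass dyadic cube's conditional measure by $\mu_j$; this is a small weak perturbation because the cube carries little mass), each $\cV_{j,\e}$ is open and dense. Hence $\cG:=\bigcap_j\bigcap_{m}\cV_{j,1/m}$ is residual.

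Next I would verify that any $\mu\in\cG$ has $\MD(\mu,x)\supset\CPD$ at $\mu$-a.e.\ $x$. Fix $j$; I need $Q_j\in\MD(\mu,x)$ for $\mu$-a.e.\ $x$. Membership in $\cV_{j,1/m}$ for all $m$ gives, for each $m$, a dyadic cube $D_m$ with $\mu(D_m)>0$ on which $\mu$ looks $1/m$-close to $\mu_j$. This is not yet enough, since a single graft at one scale does not force the scenery statistics at a.e.\ point to converge to $Q_j$. The fix is to make the genericity condition stronger: ask that for all $m$ and all dyadic cubes $D_0$, there is a further dyadic sub-cube $D\subset D_0$ with $\mu(D)>0$ on which $\mu$ agrees with a rescaled copy of $\mu_j$ \emph{for all levels $\le m$}. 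This strengthened set is still open and dense (the $D_0$-localization of the perturbation argument works, replacing the conditional measure on an arbitrarily deep, arbitrarily small-mass sub-cube of $D_0$). Then for $\mu\in\cG$ and any such $D$, since $\mu_j$ CP generates $Q_j$ and $\mu$ restricted to $D$ is, after rescaling, close to $\mu_j$, Proposition \ref{prop:MD-restriction} gives $\MD(\mu,x)=\MD(\mu_D,x)$ for $\mu$-a.e.\ $x\in D$, and $\MD(\mu_D,x)$ contains distributions arbitrarily close to $Q_j$; a standard diagonalization over $m$ (choosing nested cubes along which the local copy of $\mu_j$ is increasingly accurate) shows $Q_j\in\MD(\mu,x)$ for $\mu$-a.e.\ $x$ in a full-measure set, using that a full-measure subset of $\spt\mu$ is visited by these cubes. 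Intersecting over the countable dense family and using that $\MD(\mu,x)$ is closed yields $\MD(\mu,x)\supset\overline{\{Q_j\}}=\CPD$ for $\mu$-a.e.\ $x$.

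Finally, for the ``in particular'' statement, suppose $\mu$ is a generic measure supported on $B(0,1/2)$ and apply Theorem \ref{thm:micro-distris-are-CPDs}(3): for Lebesgue-a.e.\ $\omega\in B(0,1/2)$, every distribution in $\MD(\mu+\omega,x+\omega)$ is a CPD, so $\MD(\mu+\omega,x+\omega)\subset\CPD$; combined with the reverse inclusion from the generic property (note translation by a deterministic $\omega$ does not destroy genericity, or rather one runs the whole category argument inside measures supported on $B(0,1/2)$ and notes the grafting perturbations can be kept within that class, and translation is a homeomorphism), we get equality. The main obstacle is the second step: engineering a $G_\delta$ dense set whose defining property is genuinely strong enough to force $Q_j\in\MD(\mu,x)$ at $\mu$-\emph{almost every} $x$, not merely at some points — this requires the grafting to be available at arbitrarily small scales inside \emph{every} dyadic cube and with arbitrarily high resolution, and then a Borel--Cantelli / density-point argument (via Proposition \ref{prop:MD-restriction}) to upgrade ``some points in many cubes'' to ``almost every point.''
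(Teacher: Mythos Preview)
Your grafting idea is in the right spirit but has a genuine gap at exactly the point you flag as ``the main obstacle.'' Knowing that inside every dyadic cube $D_0$ and for every $m$ there is \emph{some} sub-cube $D$ on which $\mu$ agrees with $\mu_j$ for $m$ levels does \emph{not} force $Q_j\in\MD(\mu,x)$ for $\mu$-a.e.\ $x$. The CP scenery distribution $\overline{\la\mu,x\ra}_N$ averages over all scales $0,\ldots,N-1$, so a graft of depth $m$ sitting at level $L$ only contributes a fraction $m/(L+m)$ to the average; you need grafts whose depth is large relative to their level, and you need a.e.\ $x$ to lie in such grafts along a subsequence. Your condition gives neither: the grafted cubes at different $(m,D_0)$ may be disjoint and carry vanishingly small total mass, so a $\mu$-typical $x$ need not lie in any of them, and even for $x$ in a graft the depth/level ratio is uncontrolled. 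Proposition \ref{prop:MD-restriction} does not rescue this --- it tells you $\MD(\mu,x)=\MD(\mu_D,x)$ for a.e.\ $x\in D$, but $\MD(\mu_D,x)$ depends on the behavior of $\mu$ at \emph{all} levels below $D$, not just the first $m$.

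The paper's route avoids this by building the $G_\delta$ set directly out of the conclusion rather than out of a structural grafting hypothesis: for each $Q$ in a countable dense subset $\cS\subset\CPD$, each rational $\eps>0$ and each $K\in\N$, one takes
\[
\cU_{Q,\eps,K}=\bigcup_{N\ge K}\textrm{interior}\bigl\{\mu:\mu\bigl(\{x:d(\overline{\la\mu,x\ra}_N,\overline{Q})<\eps\}\bigr)>1-\eps\bigr\}.
\]
Openness then comes from checking that measures $\mu$ giving zero mass to all dyadic boundaries are interior points of the set in braces (continuity of $\nu\mapsto\overline{\la\nu,y\ra}_N$ at such $\mu$). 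Density comes from a grafting argument, but crucially one grafts the generating measure $\nu$ for $Q$ at \emph{every} level-$n$ cube simultaneously (Hochman's $(\nu,n)$-discretization $\tau_n=\sum_{D\in\cD_n}\tau(D)T_D^{-1}\nu$): then \emph{every} point sees only $\nu$-scenery beyond level $n$, so $\overline{\la\tau_n,x\ra}_N$ is within $2n/N$ of $\overline{\la\nu,x_n^\infty\ra}_N$, which is close to $\overline{Q}$ for $N$ large on a set of measure $>1-\eps$. This is what your single-cube grafts cannot deliver. A Borel--Cantelli step over $\eps\downarrow 0$ then gives $Q\in\MD(\mu,x)$ at $\mu$-a.e.\ $x$ directly, with no need for a density-point upgrade.
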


The idea of the proof is to choose a suitable countable dense subset $\cS \subset \CPD$ and prove that a Baire generic measure $\mu$ has $\cS$ as a subset of $\MD(\mu,x)$ at a $\mu$ typical $x$. Then the closedness of $\MD(\mu,x)$ will guarantee the claim. By the countability of $\cS$, we just need to verify the claim for a fixed $Q \in \cS$, as the countable intersection of Baire generic sets is Baire generic. This in turn can be obtained by proving that the property of being close to a measure $\mu$ whose CP scenery distribution $\la \mu,x\ra_{N}$ is weakly close to $Q$ in a set of large $\mu$ measure, is an open and dense property.

To deal with openness of measure theoretical properties using Euclidean balls, the dense subset of CPDs must consist of measures which give zero mass to all boundaries of dyadic cubes. This is guaranteed by the Lebesgue intensity properties of CPDs.

\begin{definition}We define
$$B_1^\circ := B_1 \setminus \bigcup_{N=1}^\infty \bigcup_{D \in \cD_N} \partial D.$$
That is, we remove the boundaries of dyadic cubes of all levels from $B_1$. Further, we let
\begin{eqnarray*}
\cF &= \{ \mu\in\cM^\square: \mu(B_1^\circ)=1\}.
\end{eqnarray*}
\end{definition}

\begin{lemma}
\label{lem:countablebaire}
There exists a countable dense subset $\cS \subset \CPD$ such that for any $Q \in \cS$ there exists a measure $\mu \in \cF$ that CP generates $Q$
\end{lemma}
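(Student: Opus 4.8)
The plan is to combine three ingredients already available in the excerpt: the separability of the relevant space of distributions, Proposition \ref{req:propo} which guarantees that every CP distribution is CP generated by some measure, and the fact that having Lebesgue intensity lets one replace an arbitrary generating measure by one in $\cF$. Concretely, first I would invoke the separability of $\PP(\Xi)$ (the space $\Xi=\PP(B_1)\times B_1$ is compact metrizable, hence $\PP(\Xi)$ is compact metrizable) to fix a countable set $\{Q_j\}_{j\in\N}\subset\CPD$ which is dense in $\CPD$; this is possible since $\CPD$ is a (closed, by Lemma \ref{lem:CP-distributions-are-closed}) subspace of a separable metric space, hence itself separable.

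Next, for each $j$, Proposition \ref{req:propo} provides a measure $\mu_j\in\cM$ which CP generates $Q_j$. The issue is that $\mu_j$ need not lie in $\cF$, i.e.\ it might charge the boundary of some dyadic cube. To fix this, I would observe that since $Q_j$ has Lebesgue intensity, the generating measure can be taken in $\cF$: indeed, because $\MD(\mu_j,x)=\{Q_j\}$ at $\mu_j$ a.e.\ $x$, and $[Q_j]=\mathcal L$ gives zero mass to every $\partial D$, one argues that $\mu_j$ itself gives zero mass to $B_1\setminus B_1^\circ$ — or, if one wants to be safe, one applies a generic small translation $\mu_j+\omega$ and uses Theorem \ref{thm:micro-distris-are-CPDs}(3) together with Proposition \ref{prop:MD-restriction}, after first cutting $\mu_j$ down to a subcube so that the translate still lies in $\cM^\square$; the translated measure still CP generates $Q_j$ (the translation does not affect the dyadic scenery statistics once one is deep enough inside, by an argument like that in Lemma \ref{lem:convergence}) and now charges $B_1^\circ$ fully almost surely. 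Either way one obtains $\mu_j'\in\cF$ CP generating $Q_j$, and I would then set $\cS=\{Q_j\}$, which is the desired countable dense subset together with its generating measures in $\cF$.

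I expect the main obstacle to be the boundary issue: ensuring the generating measure can be chosen in $\cF$ rather than merely in $\cM^\square$. The cleanest route is to note that if $\mu$ CP generates $Q$ and $[Q]=\mathcal L$, then by the definition of $\MD$ and adaptedness, at $\mu$ a.e.\ $x$ the empirical distributions $\langle\mu,x\rangle_N$ converge to $Q$, whose intensity $\mathcal L$ assigns measure zero to $\bigcup_N\bigcup_{D\in\cD_N}\partial D$; a Borel–Cantelli / martingale argument along the dyadic filtration then shows $\mu$-a.e.\ point avoids all dyadic boundaries at all but finitely many scales, and in fact one can arrange (replacing $\mu$ by $\mu_A$ for a suitable $A$ of positive measure, using Proposition \ref{prop:MD-restriction}) that $\mu\in\cF$. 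The rest — separability giving the countable dense family — is routine.
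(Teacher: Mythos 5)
The paper's proof proceeds differently and avoids the issue you run into. Rather than invoking Proposition \ref{req:propo} (which produces a generating measure for an arbitrary CPD via splicing, with no control over boundary mass), the paper picks its countable dense set $\cS$ inside $\ECPD$, which is dense in $\CPD$ by the Poulsen property. The payoff is that for \emph{ergodic} $Q$ one can sample the generating measure directly from $Q$: by Proposition \ref{prop:ergodic-CPD} (the ergodic theorem), $Q$-almost every $\mu$ CP generates $Q$, and simultaneously the Lebesgue intensity $[Q]=\mathcal L$ gives $\int \mu(B_1^\circ)\,\mathrm{d}\ol Q(\mu) = \mathcal L(B_1^\circ) = 1$, hence $\mu(B_1^\circ)=1$ (i.e.\ $\mu\in\cF$) for $Q$-a.e.\ $\mu$. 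Both conditions hold on full-$Q$-measure sets, so one simply picks $\mu$ in their intersection.

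Your route has a real gap exactly where you flag the "main obstacle": the claim that a measure $\mu_j$ CP-generating $Q_j$ with $[Q_j]=\mathcal L$ must itself give zero mass to dyadic boundaries does not follow from the definitions. CP generation is an a.e.\ statement about the sceneries $\langle\mu,x\rangle_N$, and $[Q_j]=\mathcal L$ constrains the \emph{micromeasures}, not $\mu_j$ directly; turning this into $\mu_j(B_1\setminus B_1^\circ)=0$ would need a separate (nontrivial) argument, which you sketch only heuristically via Borel--Cantelli/martingale language. The fallback via random translation and Theorem \ref{thm:micro-distris-are-CPDs}(3) would also need justification that the translated measure still CP generates the \emph{same} $Q_j$, which is not obvious (translation changes which dyadic cubes are seen). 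The paper's choice to work inside $\ECPD$ sidesteps all of this: the generating measure is produced by $Q$ itself, and the $\cF$ condition is automatic. You should replace the appeal to Proposition \ref{req:propo} with the combination of Proposition \ref{prop:CPD-Poulsen} (density of $\ECPD$) and Proposition \ref{prop:ergodic-CPD}, and then observe that both full-measure conditions on $\mu$ can be intersected.
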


\begin{proof} Since $\ECPD \subset \CPD$ is dense by the Poulsen property (Proposition \ref{prop:CPD-Poulsen}) and $\CPD$ is separable, there exists a countable subset $\cS \subset \ECPD$ which is dense in $\CPD$. Each CPD $Q$ has Lebesgue intensity measure, so $[Q](B_1^\circ) = 1$. This yields that $Q$ almost every $\mu$ gives mass $\mu(B_1^\circ) = 1$. On the other hand, by the ergodic theorem,  if $Q$ is ergodic, then $Q$ almost every $\mu$ CP generates $Q$, so we in particular fix one such $\mu$ with $\mu(B_1^\circ) = 1$. This yields the desired set $\cS$.
\end{proof}

We state a simple lemma which explains why we will work with measures in $\cF$.

\begin{lemma} \label{lem:continuity-no-mass-boundary}
For fixed $y\in B_1$ and $N\in\N$, the map  $\nu\to \overline{\la \nu,y\ra}_N$ is continuous at all $\mu\in\cF$ for which $\overline{\la \mu,y\ra}_N$ is defined.
\end{lemma}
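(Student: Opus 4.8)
The plan is to unwind the definition of $\overline{\la \nu,y\ra}_N$ and show that each of the finitely many summands $\delta_{T_{D^k(y)}^\square \nu}$ depends continuously on $\nu$ at points of $\cF$. Recall that
\[
\overline{\la \nu,y\ra}_N = \frac1N \sum_{k=0}^{N-1} \delta_{T_{D^k(y)}^\square \nu},
\]
where $D^k(y)$ is the dyadic cube of side length $2\cdot 2^{-k}$ containing $y$; since $y$ is fixed, the cubes $D^0(y),\dots,D^{N-1}(y)$ are fixed cubes, and for $\overline{\la \mu,y\ra}_N$ to be defined we need $\mu(D^k(y))>0$ for all $k<N$ (equivalently $\mu(D^{N-1}(y))>0$). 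As $\delta$ is weakly continuous and finite averages of continuous maps are continuous, it suffices to prove that for each fixed dyadic cube $D=D^k(y)$ the map $\nu\mapsto T_D^\square\nu = (T_D\nu)_{B_1}$ is continuous at every $\mu\in\cF$ with $\mu(D)>0$.

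First I would reduce to two elementary continuity facts. The homothety $T_D\colon\overline D\to B_1$ is a fixed bi-Lipschitz homeomorphism, so $\nu\mapsto T_D(\nu|_D)$ is weakly continuous \emph{provided} $\nu$ gives no mass to $\partial D$: indeed, for $f\in C_c(B_1)$ we have $\int f\,\mathrm d(T_D\nu) = \int_{\overline D} (f\circ T_D)\,\mathrm d\nu$, and $f\circ T_D$ extended by an arbitrary continuous value outside $\overline D$ is a bounded function whose only possible discontinuities lie on $\partial D$; since $\mu(\partial D)=0$ for $\mu\in\cF$ (as $\partial D\subset B_1\setminus B_1^\circ$), Lemma~\ref{lem:weak-convergence} gives $\int f\,\mathrm d(T_D\nu_n)\to\int f\,\mathrm d(T_D\mu)$ whenever $\nu_n\to\mu$. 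Here one should take a little care because $\nu|_D$ versus $\nu|_{\overline D}$ differ only on $\partial D$, which is $\mu$-null, so the limit is unaffected. Second, the total mass: applying the same reasoning to a function $f$ that is identically $1$ on $\overline D$ shows $\nu_n(D)\to\mu(D)>0$, so eventually $\nu_n(D)>0$ and the normalization $\eta\mapsto \eta/\eta(B_1)$ is continuous at $T_D\mu$ (which has positive, finite mass). Composing, $\nu\mapsto T_D^\square\nu$ is continuous at $\mu$.

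Putting these together: given $\nu_n\to\mu$ in $\cM^\square$ with $\mu\in\cF$ and $\mu(D^{N-1}(y))>0$, for each $k<N$ we get $T_{D^k(y)}^\square\nu_n\to T_{D^k(y)}^\square\mu$ in $\PP(B_1)$, hence $\delta_{T_{D^k(y)}^\square\nu_n}\to\delta_{T_{D^k(y)}^\square\mu}$ in $\PP(\PP(B_1))$, and averaging over $k=0,\dots,N-1$ yields $\overline{\la\nu_n,y\ra}_N\to\overline{\la\mu,y\ra}_N$. Since $\cM^\square=\PP(B_1)$ is metrizable, sequential continuity is continuity, which is the assertion.

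I expect the only genuinely delicate point to be the bookkeeping around sets of measure zero: the normalization operator $\square$ is discontinuous in general (precisely on measures charging boundaries of balls/cubes), so the whole argument hinges on the observation that $\mu\in\cF$ kills every $\partial D$ for $D$ dyadic, which is exactly what lets Lemma~\ref{lem:weak-convergence} apply. Everything else is a routine composition of continuous maps. One should also note that the statement is local — continuity only at points of $\cF$ — so no uniform estimates are needed.
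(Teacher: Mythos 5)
Your proof is correct and takes essentially the same route as the paper: both reduce to showing $\nu \mapsto T_D^\square\nu$ is continuous at $\mu \in \cF$ for each fixed dyadic cube $D = D^k(y)$, and both hinge on the observation that $\mu(\partial D) = 0$ so that Lemma~\ref{lem:weak-convergence} applies (the paper invokes it with $f = \chi_{B_z}$ to get $\nu_n(D) \to \mu(D)$ and then passes the restriction/normalization through the homothety). Your write-up fills in a bit more of the bookkeeping (continuity of $\nu \mapsto T_D(\nu|_D)$, the $\nu|_D$ vs.\ $\nu|_{\overline D}$ point, positivity of the limit mass) but there is no difference in substance.
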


\begin{proof}
The claim will follow if we can show that $\nu\to\nu(\cdot|y_0^j)$ is, for each fixed $j$, continuous at elements of $\cF$ for which $\nu(\cdot|y_0^j)$ is defined. Write $z=y_0^j$. Recall that by definition $\mu(\cdot | z) = T_{B_z}\mu_{B_z}$. Fix a sequence $(\mu_n)$ with $\mu_n \to \mu$. Since $\mu \in \cF$, we have $\mu(\partial B_z) = 0$, so by the weak convergence of measures (see for example Lemma \ref{lem:weak-convergence} applied to $f = \chi_{B_z}$) we have $\mu_n(B_z) \to \mu(B_z)$. Hence the measures $(\mu_n)_{B_z} \to \mu_{B_z}$ as $n \to \infty$. The homothety map $T_{B_z} : B_1 \to B_1$ is continuous, so the continuity follows.
\end{proof}


\begin{lemma}\label{lem:opengeneric}
Given any nonempty open set $\mathcal{O} \subset \cP(\MM^\square)$, $N \in \N$ and $c > 0$, let
\[
\cU :=  \bigl\{ \mu \in \cM^\square : \mu(\{x: \overline{\la \mu,x\ra}_N \in\mathcal{O} \}) > c \bigr\}.
\]
Then $\cF\cap\cU$ is contained in the interior of $\cU$.
\end{lemma}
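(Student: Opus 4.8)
The plan is to show that every $\mu\in\cF\cap\cU$ has a weak neighborhood contained in $\cU$. Fix such a $\mu$. By definition of $\cU$ there is $c'>c$ with $\mu(\{x:\overline{\la\mu,x\ra}_N\in\mathcal O\})>c'$. Write $V=\{x:\overline{\la\mu,x\ra}_N\in\mathcal O\}$; we want to produce, for all $\nu$ near $\mu$, a set of $\nu$-measure $>c$ on which $\overline{\la\nu,\cdot\ra}_N\in\mathcal O$. The difficulty is that the function $x\mapsto\overline{\la\nu,x\ra}_N$ is a genuinely discontinuous function of $x$ (it jumps across dyadic boundaries) and also depends discontinuously on $\nu$ at measures charging such boundaries — but $\mu\in\cF$ is chosen precisely to avoid the latter, via Lemma~\ref{lem:continuity-no-mass-boundary}.

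First I would reduce to a ``stable'' version of the event. Since $\mathcal O$ is open and $\overline{\la\mu,x\ra}_N$ depends only on the finitely many numbers $\mu(\cdot\,|\,x_0^j)(D)$ for $j<N$ and $D\in\cD_p$ (for suitable $p$ — really on finitely many dyadic masses of $\mu$), for each $x\in V$ there is a small ball $\mathcal O_x$ with $\overline{\la\mu,x\ra}_N\in\mathcal O_x\subset\overline{\mathcal O_x}\subset\mathcal O$; and by inner regularity I may pass to a compact $V'\subset V$ with $\mu(V')>c'$ such that $V'$ is covered by finitely many dyadic cubes $D\in\cD_M$ (some large $M\ge N$) with $\mu(\partial D)=0$, on each of which the ``symbolic prefix up to level $N$'' is constant. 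On such a cube the map $x\mapsto\overline{\la\mu,x\ra}_N$ is actually \emph{locally constant} in the symbolic sense: it only sees $\mu(\cdot\,|\,x_0^j)$ for $j<N$, and $x_0^j$ is determined by the level-$M$ cube containing $x$. So after shrinking, $\overline{\la\mu,x\ra}_N$ takes finitely many values $\eta_1,\dots,\eta_r$, each lying in $\mathcal O$, on the respective sub-cubes $D_1,\dots,D_r\in\cD_M$, whose union $W$ satisfies $\mu(W)>c'$ and $\mu(\partial D_\ell)=0$ for all $\ell$.

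Now I would use weak convergence twice. Given $\nu$ close to $\mu$: since $\mu(\partial D_\ell)=0$, Lemma~\ref{lem:weak-convergence} (applied to the indicator of $D_\ell$) gives $\nu(D_\ell)\to\mu(D_\ell)$, hence $\nu(W)>c$ for $\nu$ in a neighborhood of $\mu$. Moreover, by Lemma~\ref{lem:continuity-no-mass-boundary}, for each of the finitely many ``prefixes'' $z=x_0^j$ ($j<N$) arising from the cubes $D_\ell$, the map $\nu\mapsto\nu(\cdot\,|\,z)$ is continuous at $\mu\in\cF$; composing with the evaluation of the (continuous) functions defining $\mathcal O$, and using that $\overline{\la\mu,x\ra}_N=\eta_\ell\in\mathcal O$ (open), there is a neighborhood of $\mu$ on which $\overline{\la\nu,x\ra}_N\in\mathcal O$ for every $x\in D_\ell$, simultaneously for all $\ell=1,\dots,r$. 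Intersecting this neighborhood with the one above, we get: for all $\nu$ near $\mu$, the set $\{x:\overline{\la\nu,x\ra}_N\in\mathcal O\}\supseteq W$ has $\nu$-measure $\ge\nu(W)>c$, i.e.\ $\nu\in\cU$. Hence $\mu$ is interior to $\cU$, which is the claim.

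The main obstacle is the bookkeeping in the reduction step: one must argue carefully that the event $\{\overline{\la\mu,x\ra}_N\in\mathcal O\}$ can be approximated from inside by a finite union of dyadic cubes with $\mu$-null boundaries on which the relevant finite data of $\mu$ (and, after perturbation, of $\nu$) vary continuously — this is exactly where $\mu\in\cF$ is indispensable, since for $\mu$ charging dyadic boundaries the conditional measures $\nu\mapsto\nu(\cdot\,|\,z)$ are not continuous and the whole scheme collapses. Everything after that is a routine finite-intersection-of-neighborhoods argument.
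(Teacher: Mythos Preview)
Your proof is correct and follows essentially the same approach as the paper: use Lemma~\ref{lem:continuity-no-mass-boundary} at finitely many dyadic centers to ensure $\overline{\la\nu,x\ra}_N\in\mathcal O$ persists on the relevant cubes for $\nu$ near $\mu$, then use a semicontinuity argument to ensure these cubes still carry mass $>c$. The paper streamlines your reduction step by observing directly that $\overline{\la\nu,x\ra}_N$ depends only on the level-$N$ dyadic cube containing $x$, so the set $A_\mu=\{x\in B_1^\circ:\overline{\la\mu,x\ra}_N\in\mathcal O\}$ is \emph{already} a finite union of open level-$N$ cubes---this makes your inner-regularity step, the passage to level $M\ge N$, and the auxiliary balls $\mathcal O_x$ unnecessary.
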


\begin{proof}
For any $\nu\in\MM$, let
\[
A_\nu = \{x\in B_1^\circ: \overline{\la \nu,x\ra}_N \in\mathcal{O} \}.
\]
Fix $\mu\in\cF\cap\cU$, that is, $\mu(B_1^\circ)=1$ and $\mu(A_\mu)>c$.

We claim that there is an open set $\mathcal{V}  \ni\mu$ such $A_\mu\subset A_\nu$ for all $\nu\in\mathcal{V}$. Indeed, note that $\overline{\la \nu,x\ra}_N$ depends only on the dyadic cube of level $N$ which contains $x$. Now let $Z$ denote the (finite) collection of centers of dyadic cubes whose interior is contained in $A_\mu$. Then $A_\mu \subset A_\nu$ whenever
\[
 \nu \in \bigcap_{z\in Z} \{ \eta: \overline{\la \eta,z\ra}_N \in\mathcal{O} \},
\]
which contains a neighborhood of $\mu$ by Lemma \ref{lem:continuity-no-mass-boundary}. This is where we used the fact that $\mu\in\cF$.

Note that $A_\mu$ is open (as a union of open dyadic cubes). Since $\mu(A_\mu)>c$ and $\mu$ is Radon, there is a compact subset $K\subset A_\mu$ such that $\mu(K)>c$. If we let $\mathcal{W}=\{ \nu\in\MM: \nu(K)>c\}$, then $\mathcal{W}$ is open: indeed, if $\eta_n$ are in its complement and $\eta_n\to\eta$ then, using \cite[Theorem 1.24]{Mattila1995},
\[
\eta(K)\le \liminf_{n\to\infty} \eta_n(K)\le c,
\]
so the complement of $\mathcal{W}$ is closed.

We have seen that the set $\mathcal{V}\cap\mathcal{W}$ contains $\mu$, is open, and is contained in $\mathcal{U}$. Hence $\mu$ is in the interior of $\cU$, as claimed.

\end{proof}

\begin{lemma} \label{lem:densegeneric}
For any nonempty open sets $\mathcal{O} \subset\cP(\MM^\square)$ and $\cB\subset\cM^\square$, and any $\eps>0$, there exist $N_0\in\N$ and a measure $\mu\in\cB\cap\cF$ such that
\[
\mu(\{ x: \overline{\la \mu,x \ra}_N \in\mathcal{O} \textrm{ for all }N\geq N_0  \}) > 1-\eps.
\]
\end{lemma}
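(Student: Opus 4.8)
The plan is to reduce the statement to a single \emph{grafting} construction: take a measure $\eta$ which CP generates a CP distribution whose marginal lies in $\mathcal O$, and graft affine copies of $\eta$ below all dyadic cubes of a sufficiently deep level $p$, the level $p$ being chosen so that matching the masses of a fixed element of $\cB$ on cubes of level $\le p$ already forces membership in $\cB$. Concretely, I would first use Lemma~\ref{lem:countablebaire} together with the weak continuity of $Q\mapsto\overline Q$ and the density of $\cS$ in $\CPD$ to pick $Q\in\cS$ with $\overline Q\in\mathcal O$ and a measure $\eta\in\cF$ which CP generates $Q$; then fix $\gamma>0$ with $B_d(\overline Q,\gamma)\subset\mathcal O$, using that $d$ metrises the weak topology on the compact space $\cP(\MM^\square)$. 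Since $\eta$ CP generates $Q$, we have $\overline{\la\eta,y\ra}_N\to\overline Q$ for $\eta$ almost every $y$; Egorov's theorem then furnishes $m_0\in\N$ and a Borel set $W\subset B_1$ with $\eta(W)>1-\eps$ on which this convergence is uniform and along which every CP scenery of $\eta$ is defined, so that $d(\overline{\la\eta,y\ra}_N,\overline Q)<\gamma/2$ for all $y\in W$ and $N\ge m_0$.

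Next I would fix $\nu_0\in\cB$ and, invoking the bound $d(\nu,\nu_k)\le\sqrt d\,2^{-k}$ established inside the proof of Lemma~\ref{lem:easy-convergence}, choose $p\in\N$ so large that every $\nu\in\PP(B_1)$ which coincides with $\nu_0$ on all dyadic cubes of level $\le p$ belongs to $\cB$. Writing $\eta^D:=(T_D|_{\overline D})^{-1}_{*}\eta$ for the affine copy of $\eta$ carried into $\overline D$, define
\[
\mu:=\sum_{D\in\cD_p}\nu_0(D)\,\eta^D .
\]
Then $\mu(D)=\nu_0(D)$ for every $D\in\cD_p$, so $\mu$ agrees with $\nu_0$ on all cubes of level $\le p$ and hence $\mu\in\cB$; moreover $\mu\in\cF$, since $\eta\in\cF$ and each $T_D$ carries dyadic boundaries onto dyadic boundaries of positive level, or onto $\partial B_1$, all of which are $\eta$-null.

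The computational core is the identity, for $x$ outside all dyadic boundaries with $D=D^p(x)$ and $\tilde x:=T_D(x)$,
\[
T^{\square}_{D^k(x)}\mu \;=\; T^{\square}_{\tilde D^{\,k-p}(\tilde x)}\eta \qquad (k\ge p),
\]
which holds because the scalar $\nu_0(D)$ disappears after normalisation and $T_{D^k(x)}=T_{\tilde D^{\,k-p}(\tilde x)}\circ T_D$. Splitting the CP scenery average at $k=p$ yields $\overline{\la\mu,x\ra}_N=\tfrac1N\sum_{k<p}\delta_{T^{\square}_{D^k(x)}\mu}+\tfrac{N-p}{N}\,\overline{\la\eta,\tilde x\ra}_{N-p}$, whence $d\big(\overline{\la\mu,x\ra}_N,\overline{\la\eta,\tilde x\ra}_{N-p}\big)\le 2p/N$. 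Choosing $N_0$ so that $N-p\ge m_0$ and $2p/N<\gamma/2$ whenever $N\ge N_0$, the triangle inequality gives, for every $x$ with $T_{D^p(x)}(x)\in W$ and every $N\ge N_0$, that $d(\overline{\la\mu,x\ra}_N,\overline Q)<\gamma$, i.e.\ $\overline{\la\mu,x\ra}_N\in\mathcal O$. Finally, since $\eta^D(T_D^{-1}W)=\eta(W)$ and $\mu\in\cF$,
\[
\mu\big(\{x:T_{D^p(x)}(x)\in W\}\big)=\sum_{D\in\cD_p}\nu_0(D)\,\eta(W)=\eta(W)>1-\eps ,
\]
so $\mu\in\cB\cap\cF$ has the required property with this $N_0$.

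The verification that $\mu\in\cF$ and the homothety bookkeeping behind the displayed identity are the fiddly parts, but they are entirely routine. The point that genuinely needs attention is the first step, where one extracts $Q$ and the generating measure $\eta$: this makes essential use of the fact that in the applications of this lemma $\mathcal O$ is a weak neighbourhood of some $\overline Q$ with $Q\in\cS$, so that such a choice is available; here the density of $\{\overline Q:Q\in\cS\}$ in $\{\overline Q:Q\in\CPD\}$ is exactly what is needed.
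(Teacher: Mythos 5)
Your proof is correct and follows essentially the same route as the paper's: both proofs construct the required measure by grafting affine copies of a measure $\eta$ (resp.\ $\nu$) that CP generates a suitable $Q\in\cS$ below all dyadic cubes of a fixed level (what the paper, following Hochman, calls the $(\nu,n)$-discretization of $\tau$), derive the identity $\mu(\cdot\,|\,x_0^{n+j})=\nu(\cdot\,|\,x_n^{n+j})$ to compare CP sceneries, and bound the discrepancy by $O(n/N)$. The only cosmetic differences are that you invoke Egorov's theorem explicitly (the paper just extracts $m$ directly from CP generation), you guarantee $\mu\in\cB$ by matching masses on cubes of level $\le p$ and invoking the $d(\nu,\nu_k)\le\sqrt d\,2^{-k}$ bound, whereas the paper shows $\tau_n\to\tau$ weakly and takes $n$ large; and you use constants $\gamma/2+\gamma/2$ in place of the paper's $\delta+\delta<2\delta$. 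Your closing remark about the implicit restriction on $\cO$ is a fair observation — the paper's own phrasing "find a CP distribution $Q\in\cO\cap\cS$" has the same implicit reliance on $\cO$ meeting the image of $\CPD$ under $Q\mapsto\overline Q$, which is always satisfied in the sole application (Proposition~\ref{prop:genericCP}).
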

\begin{proof}
Recall the collection $\cS \subset \CPD$ from Lemma \ref{lem:countablebaire}. Since $\cS$ is dense, we can find a CP distribution $Q \in \cO \cap \cS$. By the definition of $\cS$, there exists a measure $\nu\in\cF$ that CP generates $Q$. Pick $\tau\in\cB$. Following the terminology of Hochman \cite[Section 8.2]{Hochman2010}, define the $(\nu,n)$-\textit{discretization} of $\tau$ by
\[
\tau_n = \sum_{D \in D_n}\tau(D) T_{D}^{-1} \nu.
\]
This is very similar to splicing, except that we use the measure $\tau$ for the first $n$ dyadic generations and the measure $\nu$ for all the others.
Since $\nu$ is a probability measure, we obtain $\tau_n \to \tau$ so there exists $n = n_{\delta} \in \N$ such that $\tau_n\in\cB$. Let $\mu$ be this discretization $\tau_n$.  Notice that $\mu\in\cF$ since $\nu\in\cF$.

Recall the symbolic coding of dyadic cubes introduced in Notations \ref{coding}. It follows from the definition of $\mu$ that for any $x\in B_1$ such that $\tau(x_0^n)>0$ and any $j\in \N$,
\begin{equation} \label{eq:splicing-cond-measures}
\mu(\cdot|x_0^{n+j}) = \nu(\cdot|x_n^{n+j}).
\end{equation}

We employ the metric on measures $d$ introduced in Section \ref{subsec:weak_convergence}. Pick $\delta>0$ such that $B_d(\overline{Q},2\delta)\subset \mathcal{O}$, where $B_d$ denotes the open ball in this metric. Since $\nu$ CP generates $Q$, there exists $m \in \N$ such that $\nu(U_\delta) > 1-\eps$ for
\[
U_\delta = \bigl\{y \in B_1: d(\overline{\la \nu,y \ra}_N,\ol Q)< \delta \textrm{ for all } N \geq m \bigr\}.
\]

Now if $N\ge n$, $f\colon \cP(\cM^\square)\to \R$ is $1$-Lipschitz, $\|f\|_\infty \leq 1$, $x\in B_1$ is such that $\nu(x_0^N)>0$, then  we deduce from (\ref{eq:splicing-cond-measures}) that
\begin{eqnarray*}
\biggl| \int f\,\mathrm{d}\overline{\la \mu,x \ra}_N - \int f\,\mathrm{d} \overline{\la \nu,x_n^\infty \ra}_N  \biggr| = \frac1N \biggl| \sum_{j=0}^{n-1} \int f\, \mathrm{d}\mu(\cdot|x_0^j) - \sum_{j=N}^{N+n-1}\int f \, \mathrm{d}\nu(\cdot|x_0^j) \biggr| \le \frac{2n\|f\|_\infty}{N} \leq \frac{2n}{N} .
\end{eqnarray*}
By taking $m$ larger if needed, we may further assume that $\frac{2n}{m}<\delta$. The above calculation then shows that
\[
d( \overline{\la \mu,x \ra}_N, \overline{\la \nu,x_n^\infty \ra}_N ) < \delta\quad\textrm{whenever }N\ge m.
\]
Hence, if we define
\[
V_\delta = \{ x\in B_1: x_n^\infty\in U_\delta\},
\]
we have that $\overline{\la \mu,x \ra}_N\in \mathcal{O}$ for all $N\ge m$. Furthermore, by the definition of $\mu$,
\[
\mu(V_\eps) = \sum_{D \in D_n} \tau(D) \nu(T_D V_\eps) \geq \sum_{D \in D_n} \tau(D) \nu(U_\eps) > 1-\eps.
\]
This concludes the proof.
\end{proof}

We can now conclude the proof of Proposition \ref{prop:genericCP}.

\begin{proof}[Proof of Proposition \ref{prop:genericCP}]
Let $\cS \subset \CPD$ be the countable dense subset from Lemma \ref{lem:countablebaire}. For $Q \in \cS$, $\eps > 0$ and $K \in \N$, we define
\[
  \cU_{Q,\eps,K} := \bigcup_{N \geq K} \textrm{interior}\left\{\mu \in \cM^\square : \mu(\{x : d(\overline{\la \mu,x \ra}_N, \overline{Q})  < \eps\}) > 1-\eps\right\}
\]
and
\[
  \cR := \bigcap_{Q \in \cS} \bigcap_{\eps \in \Q, \, 0 < \eps < 1}  \bigcap_{K \in \N}  \cU_{Q,\eps,K}.
\]
The collection $\cU_{Q,\eps,K}$ is open as a union of open sets over $N \geq K$. Moreover, by Lemma \ref{lem:opengeneric} applied to the open set $\cO = B_d(\overline{Q},\eps) \subset \cP(\MM^\square)$, the collection $\cU_{Q,\eps,K}$ contains the set
\[
\mathcal{D}_{Q,\eps,K}= \bigcup_{N\ge K} \left\{ \mu\in\cF: \mu\bigl(\{ x : d(\overline{\la \mu,x \ra}_N, \overline{Q})  < \eps\}\bigr) > 1-\eps\right\}.
\]
By Lemma \ref{lem:densegeneric} the set $\mathcal{D}_{Q,\eps,K}$ is dense: for a given nonempty open set $\cB \subset \cM^\square$, we can choose $\mu \in \cF\cap\cB$ such that $d(\overline{\la \mu,x \ra}_N, \overline{Q})  < \eps$ happens for all $N \geq N_0$ in a set of $\mu$ measure $>1-\eps$. In particular, we can find $N \geq K$ with this property, showing that $\cB$ meets $\mathcal{D}_{Q,\eps,K}$. Hence $\mathcal{D}_{Q,\eps,K}$ is dense as claimed, and so is $\cU_{Q,\eps,K}$.

Since the set $\cR$ is a countable intersection of sets with dense interiors, its complement is a set of first category. Fix $\mu \in \cR$ and let  $\{Q_j\}$ be an enumeration of $\cS$. Let $\eps_i \searrow 0$ be a sequence of rational numbers. Then for some $N_{i,j} \nearrow \infty$ we obtain $\mu(A_{i,j}) \geq 1-\eps_i$ for
\[
A_{i,j} := \{x \in B_1: d(\overline{\la \mu,x \ra}_{N_{i,j}}, \overline{Q}_j)  <  \eps_i\}.
\]

Write
\[
A = \bigcap_{j\in \N} \limsup_{i \to \infty} A_{i,j}.
\]
Then as $\eps_i \searrow 0$, we have by the convergence of measures that $\mu(A) = 1$. Fix $x \in A$. For each $j$ there are infinitely many $i$ such that
\[
d(\overline{\la \mu,x \ra}_{N_{i,j}}, \overline{Q}_j)  <  \eps_{i}.
\]
This shows that $\cS \subset \MD(\mu,x)$. Since $\MD(\mu,x)$ is closed in $\cP(\Xi)$ we have $ \CPD \subset \MD(\mu,x)$ at $\mu$ almost every $x$.

The second statement is immediate from the fact that $\MD(\mu+\omega,x+\omega) \subset \CPD$ for Lebesgue almost all $\omega$ (recall Proposition \ref{thm:micro-distris-are-CPDs}).
\end{proof}

\begin{proof}[Proof of Theorem \ref{thm:Baire-2}]
Given $Q\in\cS, \eps>0, K\in\N$, let
\[
  \widetilde{\cU}_{Q,\eps,K} = \textrm{interior}\{ \mu\in\cM: \mu^\square\in \cU_{Q,\eps,K} \},
\]
where $\cU_{Q,\eps,K}$ is as in the proof of Proposition \ref{prop:genericCP}. These sets are open by definition. We claim they are also dense.

Let $\pi\colon \cM\to\cM^\square$ be the map $\mu\to\mu^\square$. This map is not continuous (and is not everywhere defined), but it is defined and continuous on the set $\cD\subset\cM$ of measures which give zero mass to the boundary of $B_1$ and positive mass to $B_1$. It is easy to see that $\cD$ is in fact dense, and $\cD^\square$ is dense in $\cM^\square$ (where, as usual, $\cD^\square=\{\mu^\square:\mu\in\cD\}$). Hence, since $\cU_{Q,\eps,K}$ is open and dense, $\cD^\square\cap \cU_{Q,\eps,K}$ is dense in $\cM^\square$. Since measures in $\cD$ are continuity points of $\pi$, the set $\pi^{-1}\left(\cD^\square\cap \cU_{Q,\eps,K}\right)$ is contained in the interior of $\pi^{-1}(\cU_{Q,\eps,K})$. We will see that it is also dense in $\cM$.


Let $D$ be a metric on $\cM$ inducing the weak topology (see e.g. \cite[Remark 14.15]{Mattila1995} for an instance of such a metric). Fix $\tau\in\cM$ and $\e>0$. Pick $\nu\in\cD$ such that $D(\nu,\tau)<\e/2$. Since $\nu\in\cD$, $\varrho:=\nu(B_1)>0$ so in particular $\nu^\square$ is well defined. Now pick a sequence $\mu_n\to\nu^\square$ with $\mu_n\in  \cD^\square\cap \cU_{Q,\eps,K}$; this is possible by density. Even though $\mu_n$ is a measure on $B_1$, we identify it with a measure on $\R^d$. Let
\[
\nu_n= \nu|_{\R^d\setminus B_1} + \varrho\,\mu_n.
\]
Then it is easy to see that $\nu_n\to\nu$ weakly, so we have $D(\nu_n,\tau)<\e$ for some $n$. By construction $\nu_n^\square=\mu_n$ and therefore $\nu_n\in\pi^{-1}\left(\cD^\square\cap \cU_{Q,\eps,K}\right)$, showing that this set, and hence also $\widetilde{\cU}_{Q,\eps,K}$, is dense as claimed.

Now, similar to the proof of Proposition \ref{prop:genericCP}, define
\[
  \cR_{\mathbf{0}} =  \bigcap_{Q \in \cS} \bigcap_{\eps \in \Q, \, 0 < \eps < 1}  \bigcap_{K \in \N}  \widetilde{\cU}_{Q,\eps,K}.
\]
Fix $P \in \FD$. Theorem \ref{thm:equivalence-CP-FD} implies that there exists a CP distribution $Q$ such that
\[
P = \cent(\widehat{Q})^\square.
\]
Since $\mu \in \cR_{\mathbf{0}}$, the proof of Proposition \ref{prop:genericCP} implies that $\CPD \subset \MD(\mu,x)$ at $\mu$ almost every $x \in B_1$, so in particular $Q \in \MD(\mu,x)$ at these $x$. Then, by Proposition \ref{CPtoFD}, we know that $P \in \TD(\mu,x)$ at $\mu$ almost every $x\in B_1$. We also know that at $\mu$ almost every $x\in B_1$ we have $\TD(\mu,x) \subset \FD$ by Theorem \ref{thm:TDs-are-FDs}. We have shown that $\TD(\mu,x)=\FD$ for any $\mu\in\cR_{\mathbf{0}}$ and $\mu$ almost all $x\in B_1$.

To finish the proof, define
\[
\cR = \bigcap_{\mathbf{n}\in\Z^d} \left\{ T_{\mathbf{n}}\mu: \mu\in\cR_{\mathbf{0}}  \right\},
\]
i.e.\ we intersect all integer translates of $\cR_{\mathbf{0}}$. This set is a countable intersection of open and dense sets (notice that for fixed $\mathbf{n}$, the map $T_{\mathbf{n}}$ is a homeomorphism of $\cM$) and thus its complement is a set of first category. Moreover, if $\mu\in \cR$, then for all integer vectors $\mathbf{n}$, $\TD(\mu,x)=\FD$ for $\mu$ almost all $x\in B(\mathbf{n},1)$. As the union of these balls covers $\R^d$, we are done.
\end{proof}

\appendix

\section{Remarks on the choice of norm}

We have so far followed \cite{Hochman2010} in working with the $L^\infty$ norm of $\R^d$. However, other than simplifying some proofs, there is nothing special about this norm, and all the results from \cite{Hochman2010} that we require work equally well with any other norm. Since in geometric measure theory one uses mainly the Euclidean norm, our geometric applications in \cite{KSS2013} are simplified if we use Euclidean versions of the results presented here.

Rather than verifying all the proofs from \cite{Hochman2010} can be made to work with any norm, we will explain how to deduce the results from their $L^\infty$ version. This is most clear for the extended version of FDs, since in this case choosing different norms amounts to only to a different choice of normalization; see the discussion in \cite[Section 3.1]{Hochman2010}. For restricted versions things also work, helped by the uniqueness of the extended version.

For the sake of completeness, we provide details of the modifications needed to carry the theory to the setting of an arbitrary norm. Note that the concepts of $\FD$ and $\TD$ are effectively tied to the $L^\infty$ norm $\|\cdot\|$ which was implicit in their definition. Let $\|\cdot\|'$ be any other norm on $\R^d$. We will use an apostrophe to denote the corresponding concepts defined in terms of this new norm. For example, $B'_1$ is the unit ball in the norm $\|\cdot\|'$, the set of fractal distributions with respect to this norm is $\FD'$, etc. Further, for $\mu\in\cM$ with $0\in\supp\mu$ we will denote $\mu'=\frac{1}{\mu(B'_1)}\mu$ and $\mu^\Diamond= \mu_{B'_1}$. As with $\mu^*, \mu^\square$, we use the same notation to indicate postcomposition with these maps, as in $Q', Q^\Diamond$, etc.

The proof of the one-one correspondence $Q\to Q^\Diamond$ between restricted and extended versions (\cite[Lemma 3.1]{Hochman2010}) is independent of the chosen norm. Thus we will assume this.

\begin{proposition}
 \begin{enumerate}
  \item The map $\mu\to \mu'$ is an isomorphism (that is, a bijective factor map) from $(\cM^*,S^*)$ to $(\cM', S')$.
  \item If $Q\in\FD$ then $Q'\in\FD'$ and conversely for any $P\in\FD'$ there is $Q\in\FD$ with $Q'=P$.
  \item Theorem \ref{thm:TDs-are-FDs} continues to hold with the norm $\|\cdot\|'$, that is, if $\mu\in\cM$, then for $\mu$ almost all $x$, $\TD'(\mu,x)\subset \FD'$.
  \item Theorem \ref{thm:ergodic-components-of-FDs-are-FDs} holds for the norm $\|\cdot\|'$, that is, if $Q\in\FD'$ then the ergodic components of $Q$ are also in $\FD'$.
  \item Theorem \ref{thm:equivalence-CP-FD} holds for the norm $\|\cdot\|'$, that is, if $Q$ is a CPD then $\cent'(Q)\in\FD'$ and, conversely, given any $P\in\FD'$, there is an extended CPD $Q$ with $\cent'(Q)=P$. Here $\cent'(Q)$ is the push-down of $Q\times\lambda$ under $((\mu,x),t)\to S'_t T_x\mu$ (where $\lambda$ is normalized Lebesgue measure on $[0,\log 2)$).
 \end{enumerate}
\end{proposition}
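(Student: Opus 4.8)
The plan is to deduce each of the five parts from its $L^\infty$ counterpart through the renormalization map $\Pi\colon\mu\mapsto\mu'$, establishing (1) and (2) first and bootstrapping the rest.

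\emph{Parts (1) and (2).} The key point is that $\mu'$, $\mu^*$ and $\mu$ are positive scalar multiples of one another, so $\Pi$ is merely the change between the $B_1$-- and $B'_1$--normalization sections: it is a bijection with inverse $\nu\mapsto\nu^*$, and a one-line computation — the scalar produced by $S_t$ (resp.\ $T_x$) being absorbed by the renormalization — gives $\Pi S^*_t=S'_t\Pi$ and $\Pi T^*_x=T'_x\Pi$ on the relevant domains; Borel measurability of $\Pi^{\pm1}$ is routine since $\mu\mapsto\mu(B'_1)$ is Borel on $\cM^*$. This gives (1). For (2), the intertwining relations together with bijectivity of $\Pi$ show that $\Pi$ carries $S^*$--invariant distributions to $S'$--invariant ones, and, since $\nu$ and $\Pi\nu$ have the same null sets, it transports the quasi-Palm property verbatim; hence $Q\in\FD\Rightarrow Q'=\Pi Q\in\FD'$, the converse following by symmetry (take $Q=\Pi^{-1}P$). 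Since $\Pi$ is an isomorphism of dynamical systems it also identifies $\EFD$ with $\EFD'$.

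\emph{Part (3).} The idea is to pass to the extended version, which is norm-free up to the choice of normalization. Fix $\mu$ and $x$; by Theorem~\ref{thm:TDs-are-FDs} for $\|\cdot\|$ choose $T_j\to\infty$ along which $\langle\mu\rangle_{x,T_j}\to P\in\FD$, refining so that also $\langle\mu\rangle'_{x,T_j}\to P'\in\TD'(\mu,x)$. Since $\mu^\square_{x,t}=(S^*_t T_x\mu)^\square$ and the $\|\cdot\|'$--scenery of $\mu$ at $x$ at time $t$ equals $(S^*_t T_x\mu)^\Diamond$, both scenery averages are push-downs of the common \emph{extended} scenery average $\frac1{T_j}\int_0^{T_j}\delta_{S^*_t T_x\mu}\,\mathrm{d}t$; diagonalizing over the balls $B(0,R)$ ($R\in\N$) this has a subsequential limit $\widehat P$, which is $S^*$--invariant and satisfies $\widehat P^\square=P$, $\widehat P^\Diamond=P'$, hence — by the norm-independent uniqueness of the extended version, which we are assuming — equals the extended version of $P$. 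Therefore $\widehat P\in\FD$, so $\Pi\widehat P\in\FD'$ by (2), and $P'=(\Pi\widehat P)^\Diamond$ is the restricted $\FD'$ obtained from it. The care needed here is in the discontinuities of $\nu\mapsto\nu_{B_1}$ and $\nu\mapsto\nu_{B'_1}$, dealt with as in the $L^\infty$ case because those boundaries carry no mass $\widehat P$-almost surely.

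\emph{Parts (4) and (5).} For (4): by (1), $\Pi$ is a Borel isomorphism of $(\cM^*,S^*)$ onto $(\cM',S')$, so it carries the $S^*$--ergodic decomposition of $\Pi^{-1}Q$ onto the $S'$--ergodic decomposition of $Q$; applying Theorem~\ref{thm:ergodic-components-of-FDs-are-FDs} for $\|\cdot\|$ and then (2) shows almost every ergodic component of $Q$ lies in $\EFD'$ (the restricted case reducing to the extended one via the norm-independent restricted/extended correspondence). Part (5) rests on the identity $\cent'(Q)=\Pi(\cent(Q))$, valid because $S'_t T_x\mu=\Pi(S^*_t T_x\mu)$ pointwise for the translates occurring in the centering (which by construction have $0$ in their support) and $\cent$ is a push-down; then claim (a) is Theorem~\ref{thm:equivalence-CP-FD} for $\|\cdot\|$ followed by (2), and for (b) one takes $P_0=\Pi^{-1}P\in\FD$, a CPD $Q$ with $\cent(\widehat Q)=P_0$, and reads off $\cent'(\widehat Q)=\Pi(P_0)=P$. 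The only genuine difficulty I anticipate is in part (3): making precise that the two restricted sceneries share a common extended limit forces one to re-run (rather than merely cite) the compactness/Cesàro argument behind Theorem~\ref{thm:TDs-are-FDs} at the level of the extended scenery and on all dyadic balls, with the attendant boundary-mass bookkeeping; everything else is a one-line renormalization computation or a direct appeal to the $L^\infty$ results via (1)--(2).
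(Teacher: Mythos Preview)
Your treatment of parts (1), (2), (4) and (5) is correct and essentially identical to the paper's: everything is transported through the renormalization isomorphism $\Pi$, and in particular the identity $\cent'(Q)=\Pi(\cent(Q))$ is exactly what the paper uses for (5).

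For part (3) the paper takes a different and cleaner route. Rather than attempting to extract a subsequential limit of the \emph{extended} scenery averages by diagonalizing over balls $B(0,R)$, the paper first reduces to the case $B'_1\subset B_1$. Under this containment one has the pointwise identity $S_t^\Diamond T_x\mu=(S_t^\square T_x\mu)^\Diamond$, so the $\|\cdot\|'$-scenery is simply the $\Diamond$-restriction of the $\|\cdot\|$-scenery; if the latter converges to $P\in\FD$ along a subsequence, the former converges to $(\widehat P')^\Diamond$ where $\widehat P$ is the extended version of $P$, and one concludes via (2). The general case is then handled by interposing the auxiliary norm $\|\cdot\|''=\max(\|\cdot\|,\|\cdot\|')$, whose unit ball sits inside $B_1$, and applying the containment argument twice (once from $\|\cdot\|$ to $\|\cdot\|''$, then in reverse from $\|\cdot\|''$ to $\|\cdot\|'$).

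What this buys over your approach: it avoids entirely the compactness issue you correctly flag. The extended scenery averages live in $\PP(\cM^*)$, which is not compact, and even after restricting to $B(0,R)$ with the $B_1$-normalization the masses are unbounded, so your diagonalization step is not immediate. One can salvage it (e.g.\ by instead taking the extended version $\widehat P$ of the already-known limit $P$ and arguing that $\widehat P^\Diamond$ must coincide with any $\|\cdot\|'$-tangent distribution along the same subsequence), but this still requires the boundary-mass bookkeeping you mention. The paper's containment trick reduces that bookkeeping to a one-line identity plus a norm change.
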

\begin{proof}
\begin{enumerate}
 \item This is routine from the definitions.
 \item Given the first part, we only need to show that if $Q\in\FD$ then $Q'$ is quasi-Palm. Suppose $Q'(Y)=1$. Since $\mu\to\mu'$ is a bijection, $Y=Z'$ where $Q(Z)=1$. By the quasi-Palm property of $Q$, $\mu_{x,t}\in Z$ for $Q$ almost all $\mu$ almost all $x$. But then $T_x\mu'\in Z'$ for $Q$ almost all $\mu$ and $\mu$ almost all $x$, and whence also for $Q'$ almost all $\mu'$ and $\mu'$ almost all $x$, since $\mu$ and $\mu'$ are equivalent.
  \item Fix $\mu\in\cM$, and let $x$ be a point such that $\TD(\mu,x)\subset\FD$; by Theorem \ref{thm:TDs-are-FDs} we know this happens for $\mu$ almost all $x$. 
  
  Assume first that the unit ball of $\|\cdot\|'$ is contained in $B_1$. In this case, we have $S_t^\Diamond T_x \mu= (S_t^\square T_x \mu)^\Diamond$. Suppose $\la \mu \ra'_{x,T_j}\to Q$; by passing to a subsequence we may assume that $\la \mu \ra_{x,T_j}\to P$, where $P\in\FD$ by our choice of $x$. Then, writing $\widehat{P}$ for the extended version of $P$, we have $(\widehat{P}')^\Diamond=Q$; by the previous part, $\widehat{P}'$ is in $\FD'$ and by uniqueness is therefore the extended version of $Q$, showing that $Q\in\FD'$ as claimed. Note that this argument works in reverse: if $\TD'(\mu,x)\in \FD'$, then $\TD(\mu,x)\in\FD$.

   The general case now follows by considering first the norm $\|\cdot\|''=\max(\|\cdot\|,\|\cdot\|')$ (whose unit ball is contained in $B_1$) and then using the result for this norm to deduce the same for $\|\cdot\|'$ (which has a larger unit ball).
 \item  This assertion follows from the first two: we know from Theorem \ref{thm:ergodic-components-of-FDs-are-FDs} that if $Q= \int Q_\mu \mathrm{d}Q(\mu)$ is the ergodic decomposition of $Q$, then $Q_\mu\in\FD$ for $Q$ almost all $\mu$. By the first part, the ergodic decomposition of $Q'$ is $\int Q'_\mu \mathrm{d}Q'(\mu)$, and by the second part $Q'_\mu\in\FD'$ for $Q'$ almost all $\mu$.
 \item Note that $\cent'(Q)=(\cent(Q))'$ so both statements are consequence of the result for the $L^\infty$ norm and the first part.
\end{enumerate}
\end{proof}

Now it becomes clear that Theorems \ref{thm:FDs-are-closed-2}, \ref{thm:Poulsen-2}, \ref{thm:measure-generating-FD-2} and \ref{thm:Baire-2} hold for any norm and in particular for the Euclidean norm. This can be deduced either from the fact that our proofs are norm-independent (once we have norm-independent versions of the  results we assume), or by following simple arguments of the kind above to pass from the $L^\infty$ norm to an arbitrary norm.

\section*{Acknowledgments} We thank M. Hochman for many enlightening discussions on the topics related to this paper.

\bibliographystyle{abbrv} 
\bibliography{fractal_distributions-USM_FINAL}

\begin{thebibliography}{10}

\bibitem{BandtKaenmaki2013}
C.~Bandt and A.~K{\"a}enm{\"a}ki.
\newblock Local structure of self-affine sets.
\newblock {\em Ergodic Theory Dynam. Systems}, 33(1):1326--1337, 2013.

\bibitem{Bayart2012}
F.~Bayart.
\newblock The multifractal box dimensions of typical measures.
\newblock {\em Fund. Math.}, 219(2):145--162, 2012.

\bibitem{Bayart2013}
F.~Bayart.
\newblock Multifractal spectra of typical and prevalent measures.
\newblock {\em Nonlinearity}, 26(2):353--367, 2013.

\bibitem{Billingsley1968}
P.~Billingsley.
\newblock {\em Convergence of probability measures}.
\newblock John Wiley {\&} Sons Ltd., New York, 1968.

\bibitem{BuczolichSeuret2010}
Z.~Buczolich and S.~Seuret.
\newblock Typical {B}orel measures on {$[0,1]\sp d$} satisfy a multifractal
  formalism.
\newblock {\em Nonlinearity}, 23(11):2905--2918, 2010.

\bibitem{EinsiedlerWard2011}
M.~Einsiedler and T.~Ward.
\newblock {\em Ergodic theory with a view towards number theory}, volume 259 of
  {\em Graduate Texts in Mathematics}.
\newblock Springer-Verlag London Ltd., London, 2011.

\bibitem{FergusonFraserSahlsten2013}
A.~Ferguson, J.~Fraser, and T.~Sahlsten.
\newblock Scaling scenery of {$(\times m,\times n)$} invariant measures.
\newblock {\em Adv. Math.}, 268:564--602, 2015.

\bibitem{Furstenberg1970}
H.~Furstenberg.
\newblock Intersections of {C}antor sets and transversality of semigroups.
\newblock In {\em Problems in analysis ({S}ympos. {S}alomon {B}ochner,
  {P}rinceton {U}niv., {P}rinceton, {N}.{J}., 1969)}, pages 41--59. Princeton
  Univ. Press, Princeton, N.J., 1970.

\bibitem{Furstenberg2008}
H.~Furstenberg.
\newblock Ergodic fractal measures and dimension conservation.
\newblock {\em Ergodic Theory Dynam. Systems}, 28(2):405--422, 2008.

\bibitem{Gavish2011}
M.~Gavish.
\newblock Measures with uniform scaling scenery.
\newblock {\em Ergodic Theory Dynam. Systems}, 31(1):33--48, 2011.

\bibitem{Hochman2010}
M.~Hochman.
\newblock Dynamics on fractals and fractal distributions.
\newblock Preprint at arXiv:1008.3731, 2010.

\bibitem{Hochman2012}
M.~Hochman.
\newblock Geometric rigidity of {$\times m$} invariant measures.
\newblock {\em J. Eur. Math. Soc. (JEMS)}, 14(5):1539--1563, 2012.

\bibitem{HochmanShmerkin2009}
M.~Hochman and P.~Shmerkin.
\newblock Local entropy averages and projections of fractal measures.
\newblock {\em Ann. of Math. (2)}, 175(3):1001--1059, 2012.

\bibitem{HochmanShmerkin2013}
M.~Hochman and P.~Shmerkin.
\newblock Equidistribution from fractals measures.
\newblock {\em Invent. Math.}, 2013.
\newblock To appear.

\bibitem{KSS2013}
A.~K\"{a}enm\"{a}ki, T.~Sahlsten, and P.~Shmerkin.
\newblock Dynamics of the scenery flow and geometry of measures.
\newblock {\em Proc. Lond. Math. Soc.}, 2014.
\newblock To appear.

\bibitem{LOS1978}
J.~Lindenstrauss, G.~Olsen, and Y.~Sternfeld.
\newblock The {P}oulsen simplex.
\newblock {\em Ann. Inst. Fourier (Grenoble)}, 28(1):vi, 91--114, 1978.

\bibitem{Mattila1995}
P.~Mattila.
\newblock {\em Geometry of Sets and Measures in Euclidean Spaces: Fractals and
  Rectifiability}.
\newblock Cambridge University Press, Cambridge, 1995.

\bibitem{MortersPreiss1998}
P.~M{\"o}rters and D.~Preiss.
\newblock Tangent measure distributions of fractal measures.
\newblock {\em Math. Ann.}, 312(1):53--93, 1998.

\bibitem{ONeil1995}
T.~O'Neil.
\newblock A measure with a large set of tangent measures.
\newblock {\em Proc. Amer. Math. Soc.}, 123(7):2217--2220, 1995.

\bibitem{Orponen2012}
T.~Orponen.
\newblock On the distance sets of self-similar sets.
\newblock {\em Nonlinearity}, 25(6):1919--1929, 2012.

\bibitem{Preiss1987}
D.~Preiss.
\newblock Geometry of measures in {${\bf R}^n$}: distribution, rectifiability,
  and densities.
\newblock {\em Ann. of Math. (2)}, 125(3):537--643, 1987.

\bibitem{Sahlsten2012}
T.~Sahlsten.
\newblock Tangent measures of typical measures.
\newblock {\em Real Anal. Exchange}, 40(1):1--27, 2015.

\bibitem{SchmelingShmerkin2010}
J.~Schmeling and P.~Shmerkin.
\newblock On the dimension of iterated sumsets.
\newblock In {\em Recent developments in fractals and related fields}, Appl.
  Numer. Harmon. Anal., pages 55--72. Birkh\"auser Boston Inc., Boston, MA,
  2010.

\bibitem{Walters1982}
P.~Walters.
\newblock {\em An Introduction to Ergodic Theory}.
\newblock Springer-Verlag, New York-Berlin, 1982.

\end{thebibliography}

\end{document}